\newtheorem{lemma}{Lemma}
\newtheorem{definition}{Definition}
\newtheorem{corollary}{Corollary}
\newtheorem{proposition}{Proposition}
\newtheorem{theorem}{Theorem}
\newtheorem{question}{Question}
\newtheorem{claim}{Claim}
\newcommand{\R}{\mathbb{R}}
\newcommand{\Z}{\mathbb{Z}}
\newcommand{\N}{\mathbb{N}}
\newcommand{\ZGC}{ZGC}
\newcommand{\supp}{\mathrm{supp}}
\tikzset{snake it/.style={decorate, decoration={snake, segment length=1mm, amplitude=0.2mm}}}
\newcommand{\leftbord}{
\draw[snake it] (0.1,0) -- (0.1,1);
}
\newcommand{\rightbord}{
\draw[snake it] (0.9,1) -- (0.9,0);
}
\newcommand{\bndr}{
\draw[color=gray](0,0) rectangle (1,1);
}
\newcommand{\tle}[1]{
    \ifthenelse{\equal{#1}{ }}{
    }{}
    \ifthenelse{\equal{#1}{n}}{
        \draw (0.3,0.25) -- (0.5,0.75) -- (0.7,0.25);
    }{}
    \ifthenelse{\equal{#1}{s}}{
        \draw (0.3,0.75) -- (0.5,0.25) -- (0.7,0.75);
    }{}
    \ifthenelse{\equal{#1}{x}}{
        \draw (0.3,0.75) -- (0.5,0.25) -- (0.7,0.75);
        \draw (0.3,0.25) -- (0.5,0.75) -- (0.7,0.25);
    }{}
}
\newcommand{\tile}[1]{
\tikz[scale=0.4,baseline=2]{
   \bndr;
   \tle{#1};
}
}
\newcommand{\ltile}[1]{
\tikz[scale=0.4,baseline=2]{
   \bndr;
   \leftbord;
   \tle{#1};
}
}
\newcommand{\rtile}[1]{
\tikz[scale=0.4,baseline=2]{
   \bndr;
   \rightbord;
   \tle{#1};
}
}
\newcommand{\lrtile}[1]{
\tikz[scale=0.4,baseline=2]{
   \bndr;
   \leftbord;
   \rightbord;
   \tle{#1};
}
}
\newcommand{\smalltile}[1]{
\tikz[scale=0.2,baseline=2]{
   \bndr;
   \tle{#1};
}
}
\newcommand{\ES}{\mathrm{ES}}
\newcommand{\spa}{\mathrm{spa}}
\newcommand{\Surf}{\mathrm{Surf}}
\newcommand{\OC}[1]{\overline{\mathcal{O}(#1)}}
\title{A Three-Dimensional SFT with Sparse Columns}
\author{
  Ville Salo
  \footnote{Author supported by Academy of Finland grant 2608073211.}
  \ \ \ \ \ \ and\ \ \ \ \
  Ilkka T\"orm\"a
  \footnote{Author supported by Academy of Finland grant 346566.}
  \\
  Department of Mathematics and Statistics \\
  University of Turku, Finland \\
  \{\texttt{vosalo}, \texttt{iatorm}\}\texttt{@utu.fi}
}
\begin{document}
\maketitle

\begin{abstract}
We construct a nontrivial three-dimensional subshift of finite type whose projective $\Z$-subdynamics, or $\Z$-trace, is 2-sparse, meaning that there are at most two nonzero symbols in any vertical column. The subshift is deterministic in the direction of the subdynamics, so it is topologically conjugate to the set of spacetime diagrams of a partial cellular automaton. We also present a variant of the subshift that is defined by Wang cubes, and one whose alphabet is binary.
\end{abstract}

\section{Introduction}

Multidimensional subshifts are shift-invariant topologically closed subsets of $A^{\Z^d}$ where $A$ is a finite alphabet. These are the objects of study in the field of symbolic dynamics. Of particular interest are the subshifts of finite type or SFTs, which are the subshifts defined by finitely many forbidden patterns. Equivalently (as far as dynamical properties go), these are the sets of valid tilings given by Wang tiles (for $d = 2$) or Wang hypercubes (for $d \geq 3$), i.e.\ they are the sets of valid ways to assign hypercubes to elements $\vec v \in \Z^d$ from a finite set of hypercubes with colors on their $(d-1)$-dimensional faces, so that matching faces have matching colors.

In this paper, we study the following problem: if a subshift of finite type in three dimensions has the property that in every valid configuration, every vertical line contains only a bounded number of ``nonzero'' symbols, can the subshift contain any nonzero point at all? It is known that in one and two dimensions, the answer is \emph{no}. In one dimension, this is next to trivial. In two dimensions, it was originally proved in \cite{PaSc15}, and generalizations are proved in \cite{Sa20,SaTo21}. We show in the present paper that in three dimensions, subshifts of finite type \emph{can} have sparse traces, by giving a concrete example of such a subshift, and the bound we obtain is two (in the general formalism of SFTs -- with Wang cubes we need four cubes per vertical line).

The set of configurations that can appear on a particular one-dimensional axis in a subshift $X$ are referred to as \emph{projective subdynamics}, or \emph{trace}, which we write $T(X)$. The trace is itself a subshift of lower dimension, but need not be of finite type even if the original subshift is. The traces and projective subdynamics of two-dimensional subshifts have been studied extensively \cite{JoKaMa07,Ho11,PaSc15,Sc15} and applied in the proofs of various results \cite{Pa13,CyKr15}. 

We say a subshift is \emph{sparse}, if there is a special \emph{zero symbol} in the alphabet, and the number of nonzero symbols is bounded by some constant. Specifically, if all elements of the subshift contain at most $k$ nonzero symbols, we say it is $k$-sparse. With this terminology, what we show in this paper is that a three-dimensional subshift of finite type can have a nontrivial ($2$-)sparse ($\Z$-)trace subshift (nontrivial meaning, having at least two points). 

Traces are also related to the dynamics of cellular automata: the set of spacetime diagrams of a $d$-dimensional cellular automaton forms a $(d+1)$-dimensional subshift, and dynamical properties like mixing and expansiveness can be formulated in terms of the trace subshift and its relation to the spacetime diagrams~\cite{Ku97b}. Traces of cellular automata have been studied further in~\cite{Le06,CeFoGu07,GuRi08,Sa12c}. In particular, if a cellular automaton has a sparse trace, then it is \emph{asymptotically nilpotent}, meaning that every initial configuration converges toward the all-zero configuration in the product topology. In~\cite{Sa12c}, it was shown that asymptotically nilpotent $d$-dimensional cellular automata are nilpotent and thus have trivial spacetime diagrams. Nevertheless, our subshift has the property of vertical determinism, meaning the contents of every horizontal slice determines the slice below uniquely. Thus, it can be thought of as a \emph{partial} cellular automaton which is asymptotically nilpotent, but not nilpotent.


We give some notation to be able to give more fine-grained information about the nature of sparseness of the trace. Consider a one-dimensional subshift $X$.
The \emph{sparseness} $\spa(X)$ of $X$ is the minimal $k$ such that $X$ is $k$-sparse.
Its \emph{essential sparseness} $\ES(X)$ is the minimal $k$ such that for some $r \in \N$, the positions of all nonzero symbols in any configuration $x \in X$ can be covered by a set of the form $N + \{-r, \ldots, r\}$ where $|N| \leq k$.
Equivalently, $\ES(X)$ is the minimal $k$ such that some subshift conjugate to $X$ is $k$-sparse. Write $X \cong Y$ if two subshifts (of the same dimension) are conjugate. Define
\begin{align*}
  \alpha(X) & {} = \spa(T(X)) & \beta(X) & {} = \ES(T(X)) \\
  \underline \alpha(X) & {} = \inf \{ \alpha(Y) \;|\; Y \cong X \} & \underline \beta(X) & {} = \inf \{ \beta(Y) \;|\; Y \cong X \} \\
  \overline \alpha(X) & {} = \sup \{ \alpha(Y) \;|\; Y \cong X \} & \overline \beta(X) & {} = \sup \{ \beta(Y) \;|\; Y \cong X \}
\end{align*}
Of these notions, $\alpha$ and $\beta$ are not conjugacy-invariant, and the others are.
We include $\overline\alpha(X)$ for completeness, even though $\overline \alpha(X) = \infty$ whenever $X$ contains at least one nonzero point.
In terms of these invariants, the following is our main theorem.

\begin{theorem}
\label{thm:Main}
There exists a set of Wang cubes $C$ such that the set of valid tilings $X \subset C^{\Z^3}$ is vertically deterministic and satisfies $\alpha(X) = 4$, $\underline \alpha(X) = 2$, $\beta(X) = 2$, $\underline \beta(X) = 2$, and $\overline \beta(X) = 4$.
\end{theorem}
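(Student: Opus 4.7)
The plan is to build the Wang cube set $C$ explicitly, exploiting the horizontal planes to carry a hierarchical coordinating structure (for instance a block-substitution hierarchy) whose role is to bound the vertical lifetime of any nonzero signal while still permitting nontrivial tilings. The matching rules on vertical faces will simultaneously encode a deterministic step from each horizontal slice to the one directly below, realizing $X$ as the spacetime-diagram subshift of an asymptotically nilpotent partial two-dimensional cellular automaton. Heuristically, $2$-sparseness fails in dimensions one and two because a one-dimensional bottleneck cannot carry enough coordination; the extra horizontal dimension available here is precisely what allows local rules to pass this structure through.

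After fixing the cubes and verifying vertical determinism directly from the face data, I would analyze the structure of an arbitrary valid tiling in order to read off the trace. The intended picture is that in any configuration $x\in X$, each vertical column carries at most two vertically separated ``events'', each event consisting of at most two consecutive nonzero symbols, and both maxima are achieved by some tilings. This structural description immediately yields $\spa(T(X))=4$ and $\ES(T(X))=2$, hence $\alpha(X)=4$ and $\beta(X)=\underline\beta(X)=2$ (the latter since essential sparseness is conjugacy-invariant for the trace).

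The two remaining invariants are handled by explicit recodings. For $\underline\alpha(X)=2$ I would apply a vertical sliding block code to $X$ that folds each pair of adjacent nonzero symbols in a column into a single symbol in an enlarged alphabet; this is a conjugacy under which the new trace is genuinely $2$-sparse, while $1$-sparseness is ruled out by the existence of tilings realizing two separated events. For $\overline\beta(X)=4$ I would go the opposite way, exhibiting a conjugacy that spatially separates each two-symbol cluster into two singletons at distant heights of the trace, raising the essential sparseness to $4$; the matching upper bound $\overline\beta(X)\le 4$ follows from the structural description of the nonzeros in $X$ together with the fact that conjugacies have bounded windows and thus cannot create more clusters than there are nonzeros in a column-shaped thickening.

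The hard part is the cube construction itself. The finite-type constraint must simultaneously guarantee nontrivial tilings, vertical determinism, and a uniform column bound with the specific cluster structure described above, all enforced by nearest-neighbour face matching. This is the step where the analogous two-dimensional statement fails \cite{PaSc15,Sa20,SaTo21}, so the construction must use the third dimension in an essential way; I expect the bulk of the paper to be devoted to laying out an auxiliary hierarchical layer in the horizontal planes that encodes a bounded ``lifetime counter'' for each signal, and to verifying that the local matching rules force the promised behaviour in every tiling.
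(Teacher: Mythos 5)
Your core mechanism does not work, and it is exactly the point where the whole difficulty of the problem sits. No ``hierarchical coordination layer'' or ``bounded lifetime counter'' carried in the horizontal planes can enforce the column bound, because the local matching rules of an SFT cannot see across arbitrarily tall blocks of the blank cube: if the nonzero part of a tiling (or of one of its connected components) had vertical extent bounded by the rules, you could superimpose a far-away vertical translate of it on the same tiling and still satisfy every face constraint, destroying any sparseness bound -- this is precisely the obstruction that makes the one- and two-dimensional statements fail, and it is reproved in the paper's last section to show $\overline\beta(X)\geq 2$. Worse, distinct connected components of the nonzero set can always be placed at arbitrary relative vertical offsets (the ``zero-gluing'' phenomenon the paper formalizes), so no coordination between slices can constrain them; the column bound has to be a consequence of the geometry of each \emph{single} component. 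The paper's actual route is: force every component to be an unboundedly fluctuating surface generated by a substitution with bridges at all scales (Sections~\ref{sec:Mats}--\ref{sec:ThreeSparse}), prove the purely geometric Lemma~\ref{lem:T3Lemma} that three perturbed copies of the macrotile surface cannot pass near a common vertical line without two of them intersecting (hence at most two disjoint components meet any column, each in boundedly many cells), realize the substitution by a self-simulating, north-deterministic SFT (Section~\ref{sec:SFT}), shear it to get determinism along the z-axis, and encode cells by Wang-cube clusters with a spine and wings (Section~\ref{sec:WangCubes}), which is where the exact value $\alpha=4$ comes from. Your proposal contains none of these ingredients and its announced mechanism is the one the paper explains cannot succeed.

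Two further points are wrong even granting your structural picture. First, you derive $\underline\beta(X)=2$ from ``essential sparseness is conjugacy-invariant for the trace''; it is not ($\beta$ is not a conjugacy invariant, only $\underline\alpha,\underline\beta,\overline\beta$ are), and your own target $\overline\beta(X)=4\neq 2=\beta(X)$ contradicts that claimed invariance. Second, for $\overline\beta(X)=4$ you propose a conjugacy that ``separates each two-symbol cluster into two singletons at distant heights''; a block code has a bounded window, so it can only thicken supports by a bounded ball and cannot transport symbols to distant heights. In the paper the value $4$ arises differently: after thickening by a ball, the edge of a high bridge comes to hover directly above its unbridge in the same column while both belong to one component, so a single component can contribute two bounded clusters at arbitrary vertical separation, and two components give four; the matching upper bound again needs the geometric analysis of the mats, not just a counting of nonzeros in a thickened column.
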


From $\alpha(X) = 4$ we get the following.

\begin{corollary}
There exists a deterministic set of Wang cubes $C$, one of which is called the zero cube, such that $C$ admits a tiling containing a nonzero cube, but in every valid tiling every column contains at most $4$ nonzero cubes.
\end{corollary}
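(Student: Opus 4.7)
The plan is to deduce the corollary directly from Theorem \ref{thm:Main} by unpacking the definition of $\alpha(X)$. I will take the set of Wang cubes $C$ and the associated SFT $X \subset C^{\Z^3}$ provided by the theorem, designating as the zero cube the one whose label is the distinguished zero symbol used to define sparseness. The word ``deterministic'' in the statement of the corollary will be read as vertical determinism, i.e.\ the property that every horizontal slice of a valid tiling uniquely determines the slice immediately below it; this is part of the conclusion of Theorem \ref{thm:Main}.

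Next I would translate $\alpha(X) = 4$ into the two claims about columns. By definition $\alpha(X) = \spa(T(X))$ is the minimal $k$ for which the $\Z$-trace $T(X)$ is $k$-sparse, and since $T(X)$ is exactly the set of symbol patterns appearing along a vertical column in some valid tiling of $C$, $4$-sparseness of $T(X)$ says precisely that every such column contains at most four nonzero cubes. For the existence half of the statement, $\alpha(X) = 4 \neq 0$ means $T(X)$ is not $0$-sparse, so some trace configuration carries a nonzero symbol, and any tiling of $X$ realizing this trace must contain a nonzero cube.

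The main ``obstacle'' here is purely rhetorical: all of the mathematical content is carried by Theorem \ref{thm:Main}, and the corollary is merely a rewording of the portion $\alpha(X) = 4$ of its conclusion, together with the vertical determinism statement, phrased in language that avoids the invariants $\alpha$ and $T$.
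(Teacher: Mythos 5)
Your proposal is correct and matches the paper, which derives this corollary immediately from Theorem~\ref{thm:Main} by unpacking $\alpha(X)=4$ (the paper simply writes ``From $\alpha(X) = 4$ we get the following'' with no further argument). Your reading of determinism as vertical determinism and of $4$-sparseness of $T(X)$ as the column bound is exactly the intended interpretation.
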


From $\underline \alpha(X) = 2$ we immediately get the following.

\begin{corollary}
There exists a nontrivial deterministic three-dimensional zero-pointed subshift of finite type 
such that every column of every valid configuration contains at most two non-zero symbols.
\end{corollary}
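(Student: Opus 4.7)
The plan is to read off the corollary from Theorem~\ref{thm:Main}. That theorem supplies a Wang cube SFT $X \subset C^{\Z^3}$ which is vertically deterministic and satisfies $\underline\alpha(X) = 2$. Unwinding the definitions, the condition $\underline\alpha(X) = 2$ says precisely that $X$ is topologically conjugate to a 3D subshift $Y$ whose alphabet carries a distinguished zero symbol and in which every vertical column of every configuration contains at most two nonzero symbols.

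Most of what remains is routine bookkeeping. $Y$ is of finite type because the SFT property is invariant under topological conjugacy in any dimension (via standard higher-block recoding). Nontriviality of $Y$ is inherited across the bijection $X \cong Y$, and zero-pointedness is built into the notion of sparseness.

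The only step that requires genuine care is the determinism of $Y$. The strategy I would pursue is to arrange the conjugacy $\phi \colon X \to Y$ witnessing $\underline\alpha(X) = 2$ to be \emph{horizontal}, i.e., to have a sliding block window of zero vertical extent. Then $\phi$ and $\phi^{-1}$ both act slice-by-slice, so given a horizontal slice of $y \in Y$ one applies $\phi^{-1}$ within the slice to recover the corresponding slice of $x \in X$, invokes vertical determinism of $X$ to obtain the slice below, and reapplies $\phi$ to return to $Y$. This transfers vertical determinism from $X$ to $Y$ verbatim.

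The main obstacle is therefore verifying this horizontal character of $\phi$. Since 2-sparseness of the trace is defined column by column, the expectation is that the natural recoding achieving $\alpha(Y) \le 2$ operates within individual horizontal slices — something that should be read directly off the explicit construction used to prove Theorem~\ref{thm:Main}. If the construction does not immediately deliver this, one can post-compose with a horizontal higher-block recoding to bring $Y$ into the required form without disturbing sparseness, the SFT property, or zero-pointedness.
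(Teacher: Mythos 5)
Your overall route is the same as the paper's: the corollary is read off from Theorem~\ref{thm:Main}, since $\underline{\alpha}(X)=2$ (the infimum is attained because $\alpha$ takes integer values) yields a conjugate subshift $Y$ with at most two nonzero symbols per column, and conjugacy preserves the SFT property, nontriviality and zero-pointedness. The genuine gap is in your determinism step. Your plan hinges on the witnessing conjugacy being \emph{horizontal} (a window of zero vertical extent), and you expect this to be read off the construction; in fact the conjugacy the paper uses to get $\underline{\alpha}(Y)\leq 2$ joins pairs of \emph{vertically adjacent} Wang cubes, so it has vertical extent $2$, and no slice-wise recoding occurs anywhere (the cube SFT itself has $\alpha=4$, and the reduction to $2$ is achieved precisely by vertical blocking). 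Your fallback does not repair this: post-composing $\phi$ with a horizontal higher-block recoding produces a horizontal conjugacy only if $\phi$ was already horizontal, so as written the transfer of determinism is not established.

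The fix is that horizontality is not needed at all: determinism in a fixed direction is invariant under arbitrary topological conjugacies. If $\phi\colon X\to Y$ is a conjugacy whose inverse has radius $r$, and $y,y'\in Y$ agree on $H=\{\vec u\in\Z^3 \mid \vec u\cdot\vec e_3<0\}$, then $\phi^{-1}(y)$ and $\phi^{-1}(y')$ agree on the translate $H-r\vec e_3$; by shift-invariance, determinism of $X$ in direction $\vec e_3$ forces equality from agreement on any translate of $H$, so $\phi^{-1}(y)=\phi^{-1}(y')$ and hence $y=y'$. With this standard observation in place of your horizontality requirement, your argument closes and coincides with the paper's ``immediate'' derivation from $\underline{\alpha}(X)=2$.
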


By studying the construction, we can say a bit more.
A result of Pavlov and Schraudner \cite{PaSc15} states that a $\Z$-sofic shift is the projective subdynamics of a $\Z^2$-SFT if and only if it has no \emph{universal period}.
We obtain a concrete example of a three-dimensional SFT whose projective subdynamics is a countable sofic shift with a universal period.
Define $X_{\leq k} \subset \{0,1\}^\Z$ as the subshift containing those points where the symbol $1$ appears at most $k$ times.

\begin{theorem}
There exists a three-dimensional subshift of finite type $X \subset \{0,1\}^{\Z^3}$ such that the $\Z$-projective subdynamics is $X_{\leq 2}$.
\end{theorem}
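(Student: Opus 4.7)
The plan is to invoke the binary variant of the construction promised in the abstract: let $X \subset \{0,1\}^{\Z^3}$ be that binary SFT, for which the vertical trace is known to be $2$-sparse (the binary analogue of $\underline\alpha(X) = 2$ in Theorem~\ref{thm:Main}). I then verify that $T(X) = X_{\leq 2}$.

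The inclusion $T(X) \subseteq X_{\leq 2}$ is exactly the sparseness bound ``at most two $1$s per vertical column'', which follows from the local forbidden patterns of the binary construction, i.e.\ from the same argument that establishes $\underline\alpha(X) = 2$ in Theorem~\ref{thm:Main} transferred to the binary setting.

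For the reverse inclusion $X_{\leq 2} \subseteq T(X)$, I use that both sides are closed shift-invariant subshifts of $\{0,1\}^\Z$. The shift-equivalence classes of $X_{\leq 2}$ are represented by $0^\Z$, a single $1$ at the origin, and, for each $n \geq 1$, a pair of $1$s at positions $0$ and $n$. The first is trivial via the all-zero tiling (there is a zero symbol in the construction). The second arises as a limit of configurations for the third as $n \to \infty$, using compactness of $\{0,1\}^{\Z^3}$ and closedness of $X$. Thus the reverse inclusion reduces to exhibiting, for every $n \geq 1$, a valid configuration of $X$ whose distinguished column reads $\ldots 0\,1\,0^{n-1}\,1\,0 \ldots$.

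The main obstacle is this last step: realizing two vertically separated $1$s at every distance $n \geq 1$. Since $X$ is vertically deterministic, the second $1$ must be causally forced by the horizontal slice lying between the two events, so one needs a tunable parameter in the construction (encoded in the two horizontal dimensions) that sets the delay before another nonzero event is forced. The plan is to locate this parameter in the paper's construction (likely a horizontal distance between signals or markers introduced to accommodate the flexibility already used to show $\underline\alpha(X) = 2$), and to argue that every positive integer value of it is realizable by a globally consistent configuration; a compactness argument then extends the required finite pattern to a full tiling of $\Z^3$. Once all delays are achievable, the corresponding columns realize every two-$1$ configuration, and by taking orbit closures one obtains all of $X_{\leq 2}$, completing the equality $T(X) = X_{\leq 2}$.
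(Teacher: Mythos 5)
Your outer skeleton is fine: the inclusion $T(X)\subseteq X_{\leq 2}$ follows from a $2$-sparseness argument, and the reverse inclusion reduces (via closedness and shift-invariance of the trace) to realizing, for every $m\geq 0$, a column reading $10^m1$. But both substantive ingredients are deferred, and the mechanism you sketch for the second one is not the right one. First, there is no ready-made binary SFT to ``invoke'': constructing it is part of the theorem. In Section~\ref{sec:Variants} the paper encodes each nonzero symbol of the deterministic cover $f(X)$ by an unbordered binary word of the form $0^kw1^k$ written along a horizontal axis, with local rules forcing the neighbouring codewords and their spacing; the $2$-sparseness of the resulting binary SFT is not a formal corollary of Theorem~\ref{thm:Main} but requires rerunning the argument of Theorem~\ref{thm:ZIsSparse} and Lemma~\ref{lem:T3Lemma} for this coded object.

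Second, the ``tunable delay parameter encoded in the horizontal dimensions, which sets the delay before another nonzero event is forced'' does not exist, and the causal picture behind it is mistaken: the SFT is only the spacetime of a \emph{partial} CA, and within a single connected surface every column carries at most one nonzero cell, so nothing in a horizontal slice forces a second event. The two $1$s in a column come from two \emph{disjoint} connected components -- two vertically unbounded level-$\infty$ surfaces -- glued at an arbitrary vertical offset (the bottom-most point of a down bridge of one placed above a top point of an up bridge of the other); this is legitimate precisely because the subshift is a zero-gluing closure, so the local rules cannot see across the empty space between the components. Even this does not yet give all of $X_{\leq 2}$: distinct components are forced to keep a minimal vertical separation, so small gaps, in particular the word $11$, cannot be produced by codeword symbols alone, and the trace of the naively coded SFT is a proper subset of $X_{\leq 2}$. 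The paper closes this with a further modification: it switches to $1$ the symbol directly below every codeword occurrence (harmless and conjugacy-preserving because every codeword begins with $0$), which preserves $2$-sparseness and lets the added marker of the upper component lie at any distance $m\geq 0$ above a $1$ of the lower component's codeword, realizing $10^m1$ for all $m\geq 0$. Without this idea (or a substitute for it) your plan cannot establish equality with $X_{\leq 2}$, so the gap is genuine.
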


\begin{corollary}
There exists a three-dimensional subshift of finite type whose trace is not the trace of any two-dimensional subshift of finite type. Furthermore, we can take the separating trace to be sofic.
\end{corollary}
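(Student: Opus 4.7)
The plan is to combine the immediately preceding theorem with the Pavlov--Schraudner characterization cited just above. Let $X \subset \{0,1\}^{\Z^3}$ be the three-dimensional SFT with $T(X) = X_{\leq 2}$ supplied by that theorem; I will argue that $X_{\leq 2}$ is sofic and that it cannot arise as the trace of any two-dimensional SFT, which together establish both halves of the corollary.

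First I would verify soficness of $X_{\leq 2}$ by exhibiting a finite labeled-graph presentation. Take three vertices $v_0, v_1, v_2$, place a self-loop labeled $0$ at each vertex, and add edges labeled $1$ from $v_i$ to $v_{i+1}$ for $i = 0, 1$. A bi-infinite path in this graph either stays in a single vertex forever (giving the zero configuration) or crosses one or two of the labeled-$1$ edges at fixed positions, so the label sequences of bi-infinite paths are precisely the configurations of $\{0,1\}^\Z$ with at most two $1$s. Hence $X_{\leq 2}$ is sofic.

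Next I would invoke the Pavlov--Schraudner criterion: a $\Z$-sofic shift is the projective subdynamics of a $\Z^2$-SFT if and only if it has no universal period. Every configuration in $X_{\leq 2}$ contains at most two nonzero symbols and is therefore eventually zero in both directions, so in particular eventually $p$-periodic for every $p \geq 1$. This makes any positive integer a universal period of $X_{\leq 2}$, so by the Pavlov--Schraudner theorem, $X_{\leq 2}$ is not the trace of any $\Z^2$-SFT. Combined with the preceding theorem, $X$ is then a three-dimensional SFT whose sofic trace $X_{\leq 2}$ is not the trace of any two-dimensional SFT.

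Since the argument is essentially an application of cited results together with a small sofic presentation, I expect no serious technical obstacle. The only mild verification is that the precise notion of \emph{universal period} used in \cite{PaSc15} is indeed satisfied by $X_{\leq 2}$; this is immediate because the only periodic orbit in $X_{\leq 2}$ is the singleton $\{0^\Z\}$, which is fixed by the shift, so every positive integer qualifies under any reasonable formalization of the notion.
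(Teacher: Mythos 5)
Your proposal is correct and follows essentially the same route as the paper: the corollary is obtained by combining the theorem giving a $\Z^3$-SFT with trace $X_{\leq 2}$ with the Pavlov--Schraudner characterization, observing that $X_{\leq 2}$ is sofic (your labeled-graph presentation is fine) and has a universal period since every configuration is eventually zero, hence eventually periodic, in both directions. Your explicit soficness and universal-period verifications are just spelled-out versions of what the paper leaves implicit.
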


A $d$-dimensional subshift is \emph{block-gluing}, if any two hyperrectangular patterns the appear in valid tilings also occur in a single valid tiling as long as their distance is greater than a fixed constant.
On any block-gluing two-dimensional SFT, cellular automata that are asymptotically nilpotent are nilpotent~\cite{SaTo21}; we do not know whether this is the case also on block-gluing three-dimensional SFTs.

In \cite{Sa12c}, it is asked whether a two-dimensional SFT can admit a cellular automaton which is asymptotically nilpotent and not nilpotent.
In \cite{Sa20} it is shown that a two-dimensional SFT cannot admit a shift which is strictly asymptotically nilpotent (this means asymptotically nilpotent but not nilpotent; it is equivalent to having a trace that is sparse in either direction). Our example shows that in three dimensions a shift can be strictly asymptotically nilpotent:

\begin{corollary}
There exists a three-dimensional subshift of finite type $X \subset \{0,1\}^{\Z^3}$ admitting a cellular automaton $f : X \to X$ such that $f$ and $f^{-1}$ are both asymptotically nilpotent but neither of them is nilpotent.
We can pick $f$ to be a shift map.
\end{corollary}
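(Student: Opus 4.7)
The plan is to take $X$ to be the three-dimensional binary SFT with trace $X_{\leq 2}$ constructed in the theorem above, and to let $f : X \to X$ be the unit vertical shift $\sigma^{(0,0,1)}$. This is a cellular automaton on $X$ (every shift is), and it is invertible, with inverse $\sigma^{(0,0,-1)}$. Hence the entire task reduces to checking that $f$ and $f^{-1}$ are asymptotically nilpotent but not nilpotent. The last clause of the corollary, that $f$ may be taken as a shift map, is then automatic.

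For asymptotic nilpotence of $f$, the key observation is that $T(X) = X_{\leq 2}$ forces every vertical column of every configuration to contain at most two nonzero symbols, and therefore to be eventually zero in both the positive and negative vertical directions. Given any $x \in X$ and any finite window $F \subset \Z^3$, I would choose $n$ large enough that, for each of the finitely many columns $(a,b)$ over which $F$ sits, the vertical shift of $x$ by $n$ has pushed the at most two nonzero entries of that column out of the vertical range of $F$. This yields $(f^n x)|_F \equiv 0$ for all sufficiently large $n$, which is exactly convergence $f^n x \to 0^{\Z^3}$ in the product topology. The argument for $f^{-1}$ is symmetric, using eventual-zeroness of columns in the downward direction. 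Non-nilpotence is even easier: picking any $x \in X$ with a $1$ at some position $(a,b,c)$, every iterate $f^n x$ still contains a $1$ (at $(a,b,c-n)$), so $f^n(X) \neq \{0^{\Z^3}\}$ for any $n$; the same applies to $f^{-1}$.

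All the difficulty is already in the main construction; once the SFT with trace $X_{\leq 2}$ is in hand, this corollary is a formal consequence of column sparseness together with the existence of a single nonzero point, and I do not anticipate any substantive obstacle at this stage.
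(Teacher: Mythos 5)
Your proof is correct and follows essentially the same route the paper intends: take the binary SFT with $\Z$-trace $X_{\leq 2}$ and let $f$ be the vertical shift, so that $2$-sparseness of columns gives asymptotic nilpotence of $f$ and $f^{-1}$, while the existence of a nonzero configuration (preserved by shifting) rules out nilpotence. No gaps to report.
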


Our constructions answer a question of Pavlov, who asked in private communication whether three-dimensional SFTs can have sparse traces, a natural question in light of \cite{PaSc15}. The question was also stated explicitly as Question~2 and Question~5 in \cite{Sa17}.

\section{Preliminaries}

\subsection{Symbolic dynamics}

Let $\Sigma$ be a finite alphabet and $G$ a group with a left invariant metric (in this article, usually $\Z^d$ for some $d \geq 1$ and the $\ell_\infty$ metric).
We assume that $\Sigma$ always contains a special symbol $0 = 0_\Sigma$ called \emph{zero}.
The \emph{full $G$-shift over $\Sigma$} is the set $\Sigma^G$ of \emph{configurations}, or colorings of $G$ using $\Sigma$.
The \emph{support} of $x \in \Sigma^G$ is the set $\supp(x) = \{ g \in G \;|\; x_g \neq 0 \}$.
We endow $\Sigma^G$ with the product topology induced by the discrete topology on $\Sigma$.
Then $G$ acts on $\Sigma^G$ from the left by homeomorphisms, which are the \emph{left shifts} $x \mapsto g x$ defined by $(g x)_h = x_{g^{-1} h}$.
A \emph{$G$-subshift} is a topologically closed and $G$-invariant set $X \subset \Sigma^G$.
A \emph{block code} is a function $\phi : X \to Y$ between subshifts $X \subset \Sigma^G, Y \subset \Gamma^G$ that is continuous and $G$-equivariant ($f(g x) = g f(x)$ for all $x \in X, g \in G$).
If $\phi$ is bijective, it is called a \emph{topological conjugacy}, and $X$ and $Y$ are \emph{topologically conjugate}.
We say $\phi$ is \emph{$0$-to-$0$} if $0_\Gamma^G \in Y$ and $\phi^{-1}(0_\Gamma^G) = \{0_\Sigma^G\}$.

Let $H \leq G$ be a subgroup and $X \subset \Sigma^G$ a subshift.
The \emph{$H$-projective subdynamics} or \emph{$H$-trace} of $X$ is the $H$-subshift $T_H(X) = \{ x|_H \;|\; x \in X \}$.
If $G = \Z^d$, the $\Z$-projective subdynamics is understood with respect to the subgroup $\{0\}^{d-1} \times \Z \cong \Z$ and denoted simply $T(X)$.

Subshifts and block codes have finitary characterizations.
A \emph{pattern} is a function $P \in \Sigma^D$ for $D = D(P) \subset G$ finite.
Every set of patterns $\mathcal{P}$ defines a subshift as the set $X_{\mathcal{P}} = \{ x \in \Sigma^G \;|\; \forall P \in \mathcal{P}, g \in G : (g x)|_{D(P)} \neq P \}$ of those configurations where none of them occur.
Every subshift is defined by a set of forbidden patterns in this way.
If $\mathcal{P}$ is finite, then $X_{\mathcal{P}}$ is a \emph{shift of finite type (SFT)}.
The image of an SFT $X$ under a block map $\phi$ is called a \emph{sofic shift}, and $\phi$ (or sometimes $X$) is its \emph{SFT cover}.
We say a subshift is \emph{$0$-to-$0$ sofic} if it is a sofic shift and has a $0$-to-$0$ SFT cover.

A subshift $X \subset \Sigma^{\Z^d}$ is \emph{deterministic} in a direction $\vec v \in \Z^d$ if whenever $x, y \in X$ satisfy $x|_H = y|_H$, where $H = \{ \vec u \in \Z^d \;|\; \vec u \cdot \vec v < 0 \}$, then $x = y$.
This is equivalent to the existence of a finite set $D \subset H$ with $x|_D = y|_D$ always implying $x_{\vec 0} = y_{\vec 0}$.

Every block map $\phi : X \to Y$ between subshifts $X \subset \Sigma^G, Y \subset \Gamma^G$ is defined by a \emph{local rule} $\Phi : \Sigma^N \to \Gamma$, where $N \subset G$ is a finite \emph{neighborhood}, as $\phi(x)_g = \Phi((g^{-1} x)|_N)$.
If $N \subset B_r(e_G)$, where $B_r(g)$ is the radius-$r$ ball around $g$ in the chosen metric on $G$, then $r$ is a \emph{radius} of $\phi$.
If $X = Y$, then $\phi$ is a \emph{cellular automaton (CA)}.
The \emph{spacetime subshift} of $\phi$ is the set of configurations $x \in \Sigma^{G \times \Z}$ where $x|_{G \times \{n+1\}} = \phi(x|_{G \times \{n\}})$ for all $n \in \Z$, and its \emph{trace} is the $\Z$-trace of this SFT.

A \emph{partial CA} is a partial function $\phi : X \to X$ defined by a local rule $\Phi : \mathcal{P} \to \Gamma$ with $\mathcal{P} \subset \Sigma^N$ similarly to a CA as $\phi(x)_g = \Phi((g^{-1} x)|_N)$, which is undefined if $(g^{-1} x)|_N \notin \mathcal{P}$ for any $g \in G$.
Its spacetime subshift and trace are defined analogously to that of a CA.
Up to topological conjugacy, $\Z^{d+1}$-SFTs that are deterministic in the axial direction $\vec e_{d+1}$ are equivalent to spacetime subshifts of partial $\Z^d$-cellular automata.

Let $C$ be a finite set of colors and $G$ a group with finite and symmetric generating set $S$.
A \emph{Wang tile} on $G$ is an $|S|$-tuple $c \in C^S$.
A \emph{tiling} over a set $W \subset C^S$ of tiles is a configuration $x \in W^G$ such that $(x_g)_s = (x_{g s})_{s^{-1}}$ holds for all $g \in G, s \in S$.
Intuitively, a Wang tile colors the edges adjacent to a vertex of the right Cayley graph of $G$, and the colorings at adjacent vertices are required to match.
The set of tilings over $W$ forms a $G$-SFT.
On $\Z^d$ we use the generating set $\{ \pm \vec e_1, \pm \vec e_2, \ldots, \pm \vec e_d \}$ and depict Wang tiles as $d$-dimensional unit hypercubes with colored faces.
In particular for $d = 3$, a Wang cube is depicted as a unit cube.

\subsection{Substitutions}

Consider now $G = \Z^d$.
A \emph{substitution on $\Sigma$ of constant shape $n_1 \times \cdots \times n_d$} is a function $\tau : \Sigma \to \Sigma^R$ where $R = \prod_{i=1}^d [0, n_i-1]$.
We extend $\tau$ to all patterns $P \in \Sigma^S$ by $\tau(P)_{\vec u} = \tau(P_{\vec v})_{\vec w}$ with $\vec u_i = n_i \vec v_i + \vec w_i$ and $0 \leq \vec w_i < n_i$ for each $1 \leq i \leq d$.
In particular, $\tau^n(s)$ is a hypercube pattern for each $s \in \Sigma$ and $n \geq 0$, and we call it an \emph{$n$-tile} of $\tau$.
A \emph{macrotile} is an $n$-tile for some $n \geq 1$.
The $\tau$-image $\tau(x)$ of a configuration $x \in \Sigma^{\Z^d}$ is defined similarly.
The subshift $X_\tau$ is defined as $X_{\mathcal{P}}$, where the forbidden set $\mathcal{P}$ contains all patterns that do not occur in any $\tau^n(s)$ for $s \in \Sigma$ and $n \geq 0$.

\subsection{Conventions and the idea of the construction}

We fix some conventions to make our construction easier to follow.
We think of $\Z^3$ as a Cartesian product of $\Z^2$ and $\Z$ where we have a stack of $\Z^2$-shaped horizontal planes on top of each other, and $\Z$ is a `vertical' axis.
Movement on $\Z^2$ is discussed with cardinal directions -- north, south, west, east -- and up and down refer to $\Z$-movement.
The same conventions hold in $\R^3$.
By default, we look at configurations from above, and north is up and west is left on the paper; we indicate the axes when deviating from this convention.
The maps
\[ \pi_H : \R^3 \to \R^2, \pi_V : \R^3 \to \R \]
are the horizontal and vertical projections, i.e. projection to the first two coordinates and projection to the last coordinate.
The word \emph{projection} by default refers to the horizontal projection.

The basis of the idea is that we would like every tiling to either be empty, or to contain one approximately horizontal surface, as then every column is 1-sparse, containing only one cell of the surface.
If the surface is flat, then we are able to place an infinite number of translated copies of it on top of itself, as the local rules of the SFT cannot see over the ``empty space'' or zero-symbols, and this prevents the SFT being sparse.
To avoid this, we make the surface fluctuate: it should not be restricted to any finite set of horizontal planes, and should explore arbitrary heights and depths.
In fact, it is not hard to see that a surface suspended in empty space can either be placed on top of itself without violating local rules, or it contains pairs of points that are horizontally at a small (bounded) distance, and vertically at an arbitrarily large distance.


We define a substitution such that every configuration in the subshift it generates can be interpreted as a surface containing horizontally close but vertically distant pairs, by recursively building bridges.
One surface with this property (interpreted as an actual surface in $\R^3$) is rendered in Figure~\ref{fig:Rendering}.
We give an outline of the proof that the subshift depicted in the figure is sparse in Section~\ref{sec:SimplerExample}, but also show that it is not $3$-sparse.
The $2$-sparse example we construct is similar in spirit, but has three bridges instead of one. Its bridges are much larger, and its rendering no longer looks very interesting.

\begin{figure}[htp]
  \begin{center}
    \includegraphics[scale=0.375]{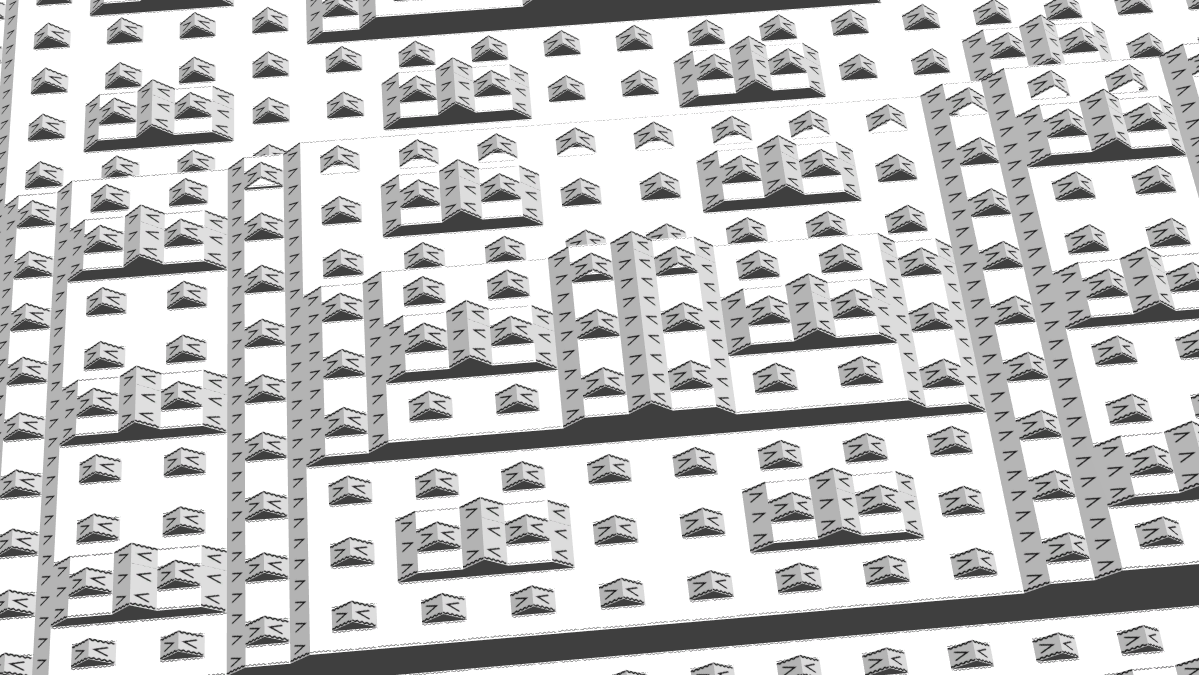}
  \end{center}
  \caption{A rendering of a fluctuating surface.}
  \label{fig:Rendering}
\end{figure}


\section{Surfaces in $\R^3$}
\label{sec:Mats}

We describe a piecewise linear surface of $\R^3$ (with some bifurcation points) such that three copies of it cannot be overlaid even if we are allowed to make small perturbations to the heights of the surface, and are allowed to translate it.
Here the relevant definition of ``overlaid'' is that the three copies almost touch at a certain horizontal coordinate.
The allowed perturbations have a rather technical definition, so we begin by defining a general class of surfaces in which they must stay.
We call them ``mats'', as it is a nice short word, but keep in mind that, typically, \emph{a mat is not flat}.

\begin{definition}
  A \emph{mat} is a nonempty compact set $M \subset \R^3$ such that the following conditions hold for some $a, b, c, d, x, y \in \R$.
  \begin{itemize}
  \item
    $\pi_H(M) = [a,b] \times [c,d]$.
  \item
    For each $(s,t) \in [a,b] \times [c,d]$, we have $|\pi_H^{-1}(s,t) \cap M| \in \{1,2\}$.
  \item
    For each $s \in [a,b]$ we have $\pi_H^{-1}(s,c) \cap M = \{x\}$ and $\pi_V^{-1}(s,d) \cap M = \{y\}$.
  \item
    Define the \emph{upper and lower functions} of $M$ as $u_M(s,t) = \max(\pi_H^{-1}(s,t) \cap M)$ and $\ell_M(s,t) = \min(\pi_H^{-1}(s,t) \cap M)$.
    For each $s \in [a,b]$, these functions are continuous on $\{s\} \times [c,d]$.
  \end{itemize}
\end{definition}

Intuitively, a mat $M$ contains the segments $[a,b] \times \{c\} \times \{x\}$ and $[a,b] \times \{d\} \times \{y\}$, which are furthermore connected by two sets of curves with constant x-coordinates.
The name ``mat'' is evocative of a weaved fabric, consisting of parallel warp strings (and orthogonal weft strings, which we ignore in this metaphor) suspended between two rigid rods in a loom.

We define a metric (allowing infinite distances) on the space of all mats.

\begin{definition}
Let $M, N \subset \R^3$ be mats.
If $\pi_H(M) \neq \pi_H(N)$, then $d(M, N) = \infty$. Otherwise, $d(M, N) = \sup_{(s,t) \in \pi_H(M)} d_H(\pi_H^{-1}(s,t) \cap M, \pi_H^{-1}(s,t) \cap N)$, where $d_H$ is the Hausdorff metric on compact subsets of $\R^3$.
\end{definition}

In words, the distance of two mats with distinct $\pi_H$-projections is infinite; and the distance between two mats with equal $\pi_H$-projections is the supremum of the Hausdorff distances between their intersections with any vertical column.
This means that two mats with the same $\pi_H$-projection are $\epsilon$-close if each point of the first mat is $\epsilon$-close to some point of the second mat on the same column, and vice versa.

\begin{lemma}
  \label{lem:MatNeighbor}
  Let $M, N \subset \R^3$ be two mats with $d(M,N) < \infty$. Then $|u_M(s,t) - u_N(s,t)| \leq d(M,N)$ and $|\ell_M(s,t) - \ell_N(s,t)| \leq d(M, N)$ for all $(s,t) \in \pi_H(M)$.
\end{lemma}

\begin{proof}
  Suppose that the claim is false.
  By symmetry, we may assume $\ell_M(s,t) > \ell_N(s,t) + d(M, N)$ for some $(s,t) \in \pi_H(M)$.
  We must then have $|u_M(s,t) - \ell_N(s,t)| \leq d(M, N)$.
  This implies $u_M(s,t) < \ell_N(s,t) + d(M, N) < \ell_M(s,t)$, a contradiction.
\end{proof}

We now prove the fundamental lemma of mats, which states that if horizontal projections of two disjoint mats overlap, then one is everywhere on top of the other.
We will use it in the reverse direction to produce intersections from collections of overlapping mats.

\begin{lemma}[Fundamental lemma of mats]
\label{lem:Fundamental}
Let $M, N \subset \R^3$ be disjoint mats such that $\pi_H(M)$ is a translate of $\pi_H(N)$. If $(x_i, y_i, z_i) \in M$ and $(x_i, y_i, z'_i) \in N$ for $i = 1, 2$, and $z_1 < z'_1$, then also $z_2 < z'_2$.
\end{lemma}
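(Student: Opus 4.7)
My plan is to isolate a single sign $\epsilon(s) \in \{-1, +1\}$ that describes, at each first coordinate $s$ in the horizontal overlap of $M$ and $N$, which mat lies above the other, and then to show this sign is constant across the overlap. By the symmetry between $M$ and $N$, I may assume the translation vector $(u, v)$ between the horizontal projections satisfies $v \geq 0$, so $c_M \leq c_N$, $d_M \leq d_N$, and the vertical overlap is $[c_N, d_M]$. The key feature of this reduction is that $M$'s northern boundary (where all $M$-strings take the common height $y_M$) and $N$'s southern boundary (where all $N$-strings take the common height $x_N$) both lie inside this overlap.

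\textbf{Step 1: sign at fixed $s$.} Fix $s$ in the $s$-overlap and any strings $\sigma^M, \sigma^N$ at first coordinate $s$. The function $f_{\sigma^M}(y) - f_{\sigma^N}(y)$ is continuous and nonvanishing on $[c_N, d_M]$ by disjointness, so it has constant sign. Evaluating at $y = d_M$ gives $\mathrm{sign}(y_M - f_{\sigma^N}(d_M))$, which is independent of the choice of $\sigma^M$; evaluating at $y = c_N$ gives $\mathrm{sign}(f_{\sigma^M}(c_N) - x_N)$, which is independent of $\sigma^N$. Combining, the sign depends on neither string, only on $s$; I call it $\epsilon(s)$. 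A short closure argument using compactness of $M, N$ and $C_s = \overline{\bigcup_i S_{s,i}}$ extends the inequality $\epsilon(s)(f_{\sigma^M}(y) - f_{\sigma^N}(y)) > 0$ to all $z, z'$ with $(s, y, z) \in M$ and $(s, y, z') \in N$ (for $y \in [c_N, d_M]$), where strictness in the limit survives because $M \cap N = \emptyset$.

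\textbf{Step 2: $\epsilon$ is constant.} The sets $T_\pm = \{s : \epsilon(s) = \pm 1\}$ partition the connected $s$-overlap $[\max(a_M, a_N), \min(b_M, b_N)]$, so it suffices to show both are closed. If $s_n \in T_-$ and $s_n \to s^*$, pick any $z_n$ with $(s_n, d_M, z_n) \in N$; by Step 1 we have $z_n > y_M$. Compactness of $N$ yields a convergent subsequence with limit $z^*$ satisfying $(s^*, d_M, z^*) \in N$ and $z^* \geq y_M$. Since $(s^*, d_M, y_M) \in M$ and $M \cap N = \emptyset$, we get $z^* > y_M$, and Step 1 then forces $\epsilon(s^*) = -1$, i.e. $s^* \in T_-$. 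The argument for $T_+$ is symmetric. The lemma now follows: the hypothesis gives $\epsilon(x_1) = -1$ by Step 1, constancy of $\epsilon$ on the $s$-overlap gives $\epsilon(x_2) = -1$, and Step 1 at $s = x_2$ yields $z_2 < z'_2$.

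\textbf{Main obstacle.} The subtlety is that a mat can contain several strings (and limit points added by the closure) at a common $(s, y)$, so the ``height of $M$ at $(s, y)$'' is genuinely a set rather than a single value, and one might worry that $M$-strings and $N$-strings could interlock vertically without crossing. The proof sidesteps this by exploiting the rigid one-height boundaries of each mat: the reduction $v \geq 0$ places \emph{both} the single-height northern boundary of $M$ and the single-height southern boundary of $N$ inside the overlap, which is precisely what forces the sign at a fixed $s$ to factor through $s$ alone. The closedness step in Step 2 then propagates this sign across $s$ by a compactness argument anchored again at one of those rigid boundaries.
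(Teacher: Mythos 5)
Your proof is correct in substance and rests on the same two pillars as the paper's argument: the rigid constant-height boundary rods of a mat combined with the intermediate value theorem, and a closedness-plus-connectedness argument to make the resulting sign constant. The organization is recognizably different, though. The paper anchors everything at the south border line of $M$, classifies its points by whether $N$ has points above and/or below, excludes the mixed type by following the two enclosing $N$-strings north until they merge at $N$'s north rod (forcing a crossing by the IVT), gets a constant sign along that line from closedness of $L_D,L_U$ and connectedness, and then needs a second IVT step to propagate the comparison northward along the strings of $M$. You instead define a sign $\epsilon(s)$ section by section: the difference of any $M$-string and any $N$-string is nonvanishing on the $y$-overlap, hence of constant sign, and evaluating at $M$'s north rod and at $N$'s south rod (both inside the overlap after your normalization $v \geq 0$) shows this sign is independent of either string. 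That evaluation trick is a clean substitute for the paper's merge-and-cross argument, and since your sign already covers the whole overlap in the $y$-direction, you do not need the paper's second propagation step; your closedness of $T_{\pm}$, anchored again at $M$'s north rod, plays the role of the paper's closedness of $L_D$ and $L_U$.

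The one place where you are too quick is the closure extension at the end of Step 1 (which Step 2 genuinely uses, since the limit point $(s^*,d_M,z^*)$ need not lie on a string of $N$ at $s^*$). The phrase ``strictness in the limit survives because $M \cap N = \emptyset$'' suggests the weak inequality passes to the limit automatically and disjointness only upgrades it to a strict one. It does not: a closure point $(s,y,z)$ of $M$ is a limit of values $f_n(y_n)$ with $y_n \to y$ along a family of strings with no equicontinuity, and one can build families with $f_n > g_m$ pointwise but $\lim_n f_n(y_n) < \lim_m g_m(y'_m)$ (spikes of the $g_m$ accumulating at $y$ from one side, the $f_n$ dropping just past $y$); such examples are only excluded because the \emph{closures} then intersect. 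So disjointness must enter already in the weak comparison. A correct repair: pass to subsequences so that $f_n(y) \to \beta$ and $g_m(y) \to \alpha$ (whence $\beta \geq \alpha$), use the IVT on each string to see that the closed sections contain the vertical segments over $(s,y)$ joining $z$ to $\beta$ and $z'$ to $\alpha$, and check that $z \leq z'$ would force these segments, hence $M$ and $N$, to meet. This is only a few lines but it is a genuine extra idea beyond what you wrote. To be fair, the paper's own proof is no more careful on this point -- it tacitly assumes the relevant points of $N$ lie on strings -- so your write-up, which at least names the limit-point issue, is not weaker than the published one once this patch is inserted.
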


\begin{proof}
  Let $\pi_H(M) = [0,a] \times [0,b]$ and $\pi_H(N) = (a',b') + ([0,a] \times [0,b])$.
  By symmetry we may assume $a', b' \leq 0$.
  The set $L = \pi_H^{-1}([0,a] \times \{0\}) \cap M$ is a line segment of the form $[0,a] \times \{0\} \times \{x\}$.
  From the assumptions we have $0 \leq y_1 \leq b+b'$ and $\ell_M(x_1, y_1) \leq z_1 < z'_1 \leq u_N(x_1, y_1)$.
  Since both functions are continuous on $\{x_1\} \times [0, b+b']$ and the mats are disjoint, by the intermediate value theorem we must have $\ell_M(x_1, t) < u_N(x_1, t)$ for all $t \in [0, b+b']$.
  In particular $x = \ell_M(x_1, 0) < u_N(x_1, 0)$.
  
  Suppose for a contradiction that $z_2 > z'_2$.
  Then we have $x = u_M(x_2, 0) > \ell_N(x_2, 0)$ analogously to the above.

  Since mats are compact, the subsets $A_1 = \{s \in [0,a+a'] \mid u_N(s, 0) \geq x\}$ and $A_2 = \{s \in [0,a+a'] \mid \ell_N(s,0) \leq x\}$ are compact as well.
  Both are nonempty since $x_1 \in A_1$ and $x_2 \in A_2$, and because their union $[0, a+a']$ is connected, they must intersect in at least one point $x_3$.
  Now we have $\ell_N(x_3, 0) \leq x = \ell_M(x_3, 0) \leq u_N(x_3, 0)$.
  All three functions are continuous on $\{x_3\} \times [0, b+b']$ and $\ell_N(x_3, b+b') = u_N(x_3, b+b')$, so by the intermediate value theorem there must exist $t \in [0, b+b']$ with either $\ell_N(x_3, t) = \ell_M(x_3, t)$ or $\ell_M(x_3, t) = u_N(x_3, t)$.
  This contradicts the disjointness of $M$ and $N$.
\end{proof}

For mats $M, N$ such that $M$ is below $N$ in the sense of the previous lemma, we write $M < N$.
Note that this does not define a partial order on the set of all mats.

Pick numbers $a, b, c, d \in \N$ (which we will specify later).
The following diagram describes a mat $T \subset \R^3$ of particular interest to us:

\[\begin{smallmatrix}
\tile{ } & \tile{ } & \tile{ } & \tile{ } & \tile{ } & \mbox{ $\}$ $a$ times} \\
\rtile{ } & \lrtile{s} & \lrtile{n} & \lrtile{s} & \ltile{ } & \mbox{ $\}$ $b$ times} \\
\rtile{ } & \lrtile{s} & \lrtile{ } & \lrtile{s} & \ltile{ } & \mbox{ $\}$ $c$ times} \\
\rtile{ } & \lrtile{ } & \lrtile{ } & \lrtile{ } & \ltile{ } & \mbox{ $\}$ $d$ times} \\
\rtile{ } & \lrtile{n} & \lrtile{ } & \lrtile{n} & \ltile{ } & \mbox{ $\}$ $c$ times} \\
\rtile{ } & \lrtile{n} & \lrtile{s} & \lrtile{n} & \ltile{ } & \mbox{ $\}$ $b$ times} \\
\tile{ } & \tile{ } & \tile{ } & \tile{ } & \tile{ } & \mbox{ $\}$ $a$ times} \\
\end{smallmatrix}\]

Denote $e = 2(a+b+c)+d$. Each small tile represents a subset of $\R^3$ whose horizontal projection is some unit square $[i,i+1] \times [j,j+1]$ for $(i,j) \in \Z^2$. Placing the southwest corner of the southwestmost tile at the origin of $\R^3$, the tiles are placed so that their horizontal projections cover exactly the area $[0,5] \times [0, e]$. The heights come from the fact that tiles are slanted. To fix an orientation, we choose that a northward $\wedge$ implies an upward slope of $1$ when moving to the north, and similarly for a southward $\vee$. This defines a set which is not simply connected (the jagged edges of tiles imply that the sides are not at the same height).

We highlight the following parts of $T$:
\begin{itemize}
\item The function $\ell_T$ restricted to $([0,1] \cap [4,5]) \times [0, e]$ is the constant $0$ function.
  Its graph forms the \emph{west and east unbridges}, two simply connected submats with projections $[0,1] \times [0,e]$ and $[4,5] \times [0,e]$ respectively.
  \item The function $u_T$ restricted to $([1,2] \cup [3,4]) \times [0,e]$ is piecewise linear, given by $\ell_T(s, t) = \max(0, \min(b+c, t-a, a+2(b+c)+d-t))$.
    Its graph forms the the \emph{west and east up bridges}, two simply connected submats with projections $[1,2] \times [0,e]$ and $[3,4] \times [0,e]$ respectively.
  \item The function $\ell_T$ restricted to $[2,3] \times [0,e]$ is piecewise linear, given by $u_T(s,t) = \min(0, \max(-b, a-t, t-a-2(b+c)-d))$.
    Its graph forms the \emph{central down bridge}, a simply connected submat with projection $[2, 3] \times [0, e]$.
  \item
    A \emph{blade} is the graph of the restriction of $\ell_T$ or $u_T$ to $\{s\} \times [0,e]$ with $s \in \Z$.
\end{itemize}
The east and west up bridges consist of strings that, going from south to north, first ascend and then descend, and at their highest point they have height $h = b+c$. The central down bridge is, at its lowest point, at height $-b$. The unbridges are flat.
Blades are exactly the east and west borders of these structures, and they will play an important role in the coming proofs.

Let also $T^3 = \{(0,-e,0), (0,0,0), (0,e,0)\} + T$, so $T^3$ is a mat with horizontal projection $[0,5] \times [-e,2e]$. 

Choose a constant $\epsilon > 0$.
We will determine its value later in the construction, after we have obtained a collection of inequalities it and the values $a, b, c, d$ should satisfy.
Due to Lemma~\ref{lem:MatNeighbor}, whenever a mat $M$ is $\epsilon$-close to $T$ in the mat metric, it makes sense to talk about its bridges and blades just like with $T$ (although the central down bridge should also include be the union of the graph of $u_M$ restricted to $(2,3) \times [0,e]$, and analogously for the other parts: the interiors of the bridges of $M$ may contain bifurcations).

We now prove that if three $\epsilon$-perturbed translates of $T^3$ intersect a single column, and the intersection points are all $\epsilon$-close to each other, then at least two of the translates intersect.
It is a good idea to think of $\epsilon$ as an infinitesimally small number, but we need to keep track of ``how infinitesimal'' it needs to be in the proof. In practice we encounter the inequalities $4\epsilon < h$, $7 \epsilon \leq h$, $h \geq 5\epsilon$, $b > 5\epsilon$, and $\epsilon < c/4$, which follow from the assumption $\epsilon < \min(b/7, c/8)$. One should also visualize the tops and bottoms of the up and down bridges as taking up most of the tile, and $d \geq \frac{2e}{3}$ suffices in practice.


\begin{lemma}
  \label{lem:T3Lemma}
  Suppose $\epsilon < \min(b/7, c/8)$ and $d \geq \frac{2e}{3}$. Let $A_1, A_2, A_3 \subset \R^3$ be three mats in the $\epsilon$-neighborhood of $T^3$. Then there do not exist $\vec{v}_1, \vec{v}_2, \vec{v}_3 \in [0,5] \times [0,e] \times \R$ and $r_1, r_2, r_3 \in \R$ such that for all $i \neq j \in \{1,2,3\}$,
  \begin{itemize}
  \item $|r_i - r_j| < \epsilon$,
  \item $(0,0,r_i) \in A_i - \vec{v}_i$,
  \item the sets $A_i - \vec{v}_i$ and $A_j - \vec{v}_j$ are disjoint.
  \end{itemize}
\end{lemma}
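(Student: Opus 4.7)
The proof is by contradiction: suppose three pairwise disjoint $A_i-\vec v_i$ together with heights $r_i$ satisfy the three bullet conditions. The plan is to pin down a strict vertical order on the three mats via the Fundamental lemma, then exploit the rigid bridge-plus-blade structure of $T^3$ together with the quantitative hypotheses $\epsilon<\min(b/7,c/8)$ and $d\geq\tfrac{2e}{3}$ to force two of the mats to meet.

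For the ordering step, each projection $\pi_H(A_i-\vec v_i)$ is a translate of $[0,5]\times[-e,2e]$ containing the origin, so any two overlap in a region containing $(0,0)$. Applying Lemma~\ref{lem:Fundamental} with the witnesses $(0,0,r_i)$, after relabeling so that $r_1<r_2<r_3$ (disjointness forbids $r_i=r_j$), pairwise disjointness yields the global order $A_1-\vec v_1<A_2-\vec v_2<A_3-\vec v_3$ on each pair's common projection. Then Lemma~\ref{lem:MatNeighbor}, applied to each of the three $T$-copies comprising $T^3$ (the hypothesis $4\epsilon<h$ follows from $\epsilon<c/8$), identifies the unbridges, up bridges, central down bridge, and blades of each $A_i$ uniquely with those of $T^3$; writing $\vec v_i=(u_i,w_i,z_i)\in[0,5]\times[0,e]\times\R$, this lets us speak of how the world column $\{(0,0)\}\times\R$ samples each mat.

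The key geometric point is that at the integer $s$-coordinates $T^3$ has \emph{two} strings—an unbridge string and a bridge string—so each $A_i$ inherits a blade consisting of two $2\epsilon$-close strings, and the global ordering must be consistent with both strings of each mat simultaneously. If two of the mats have blades at different world $s$-coordinates, then at the relevant column the single string of one mat must lie above or below \emph{both} strings of the other, producing a forced vertical gap of roughly $h$ or $b$; this dwarfs the $\epsilon$-pinch $|r_i-r_j|<\epsilon$ at $(0,0)$ by the assumption $\epsilon<\min(b/7,c/8)$, contradicting the ordering read off from the $r_i$'s. If on the other hand all blades align horizontally, the three stacked copies of $T$ and the long plateau $d\geq\tfrac{2e}{3}$ in $T^3$ leave enough south-north room to exhibit a column where the middle mat must simultaneously lie above the top of the bottom mat's up bridge and below the baseline of the top mat's down bridge, again impossible.

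The main obstacle is the bookkeeping: one must catalogue the relative values of $(u_i,w_i)$ for the three mats and, in each case, point to an explicit world column where the forced gap exceeds the $\epsilon$-budget. The constants $b/7$ and $c/8$ and the choice to stack three copies of $T$ (rather than one) into $T^3$ appear to be calibrated exactly so that every case closes cleanly.
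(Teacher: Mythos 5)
Your setup (contradiction, the vertical order of the $B_i = A_i - \vec v_i$ via Lemma~\ref{lem:Fundamental}, and the structure identification via Lemma~\ref{lem:MatNeighbor}) matches the paper, but the core of your case analysis has a genuine gap. Your case (a) — ``two of the mats have blades at different world $s$-coordinates, hence a forced vertical gap of roughly $h$ or $b$, which dwarfs the $\epsilon$-pinch'' — does not give a contradiction. The pinch $|r_i - r_j| < \epsilon$ constrains the surfaces only at the single column $\{(0,0)\}\times\R$, not the mats' baselines: a mat's surface ranges over roughly $[-b, h]$ relative to its baseline, so two mats whose baselines differ by about $h$ (or even $h+b$) can still pinch at one column, with the lower mat meeting the column on an up bridge (this is exactly the configuration of the third mat in the paper's Figure~\ref{fig:ThirdUnder}). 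Moreover, when the horizontal offset is in $(4,5]$, the opposite border unbridges overlap and there is no forced gap at all — this is precisely the hard case, not one that is dismissed by a gap argument. The paper's proof has to work around both facts: its Claim~\ref{cl:xDistance} only yields a \emph{baseline} gap $\geq h-2\epsilon$ when the x-offset is in $[0,4]$; a contradiction with the pinch then requires all three mats (if all pairs were x-close, some pair has baseline gap $\geq 2h-4\epsilon$, which finally exceeds the total height range $h+b+2\epsilon$ and forces the pinch points $\epsilon$ apart since $h-b-6\epsilon \geq \epsilon$). This three-mat ``doubling'' step is absent from your plan and cannot be replaced by a two-mat gap estimate.

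Consequently you never isolate, let alone resolve, the genuinely difficult configuration: two mats pinching at height about $0$ on their opposite flat unbridges, and the third mat (shown below both, as in the paper's Claim~\ref{cl:Below}) pinching at the same column via one of its up bridges. The contradiction there is not at the pinch column at all; it comes from the \emph{other} up bridge of the low mat, whose top part must overlap in projection with the bottom part of the central down bridge of one of the upper mats — this is where the hypothesis $d \geq \tfrac{2e}{3}$ is actually used — violating the ordering from Lemma~\ref{lem:Fundamental}. Your case (b) (``all blades align horizontally'') gestures at an up-bridge/down-bridge conflict, but attaches it to the wrong case: when all offsets coincide the argument is easy, whereas the bridge collision is needed exactly in the offset-in-$(4,5]$ case that your case (a) wrongly claims to settle. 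As written, the proposal is a plan whose decisive steps either do not follow or are misplaced, so it would not close.
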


\begin{proof}
  We assume that such $\vec{v}_i$ and $r_i$ can be found and derive a contradiction.
  Denote $B_i = A_i - \vec{v}_i$ and $(x_i, y_i, z_i) = \vec{v}_i$ for all $i \in \{1,2,3\}$. 

  The steps for the proof are the following.
  We show that there are $i, j \in \{1,2,3\}$ such that $|x_i - x_j| \in (4, 5]$, i.e. the opposite-side unbridges of the mats $B_i$ and $B_j$ are on top of each other.
  The situation is then as in Figure~\ref{fig:LeftRightIntersection}.
  Then, we show that the third mat $B_k$ must be below both $B_i$ and $B_j$, by having one of its up bridges close to the borders of the others.
  The situation is then as in Figure~\ref{fig:ThirdUnder}, and we see that the other up bridge of $B_k$ then intersects the down bridge of either $B_i$ or $B_j$.

  \newcommand{\TopTileAt}[2]
  {
    \draw[fill = white] (#1,#2) rectangle (#1+5,#2+7);
    \draw(#1,#2) -- (#1+5,#2);
    \draw(#1,#2+7) -- (#1+5,#2+7);
    \draw(#1+1,#2+1) rectangle (#1+2,#2+6);
    \draw(#1+2,#2+1) rectangle (#1+3,#2+6);
    \draw(#1+3,#2+1) rectangle (#1+4,#2+6);
    \node()at(#1+1.5,#2+3.5) {U};
    \node()at(#1+2.5,#2+3.5) {D};
    \node()at(#1+3.5,#2+3.5) {U};
  }

  \newcommand{\SideTileAt}[2]
  {
    \draw (#1,#2)--(#1+5,#2);
    \draw (#1+1,#2) rectangle (#1+2,#2+5);
    \draw (#1+2,#2) rectangle (#1+3,#2-3);
    \draw (#1+3,#2) rectangle (#1+4,#2+5);
    \node()at(#1+1.5,#2+2.5) {U};
    \node()at(#1+2.5,#2-1.5) {D};
    \node()at(#1+3.5,#2+2.5) {U};
  }

  \begin{figure}
    \begin{subfigure}{.5\textwidth}
      \centering
      \begin{tikzpicture}[scale=0.4]

        \TopTileAt{0}{-7}
        \TopTileAt{0}{0}
        \TopTileAt{0}{7}
        \node[draw,circle] (a) at (-2,7) {$B_i$};
        \draw[->] (a) -- (-0.3,6);

        \TopTileAt{0+4.3}{-7+2.1}
        \TopTileAt{0+4.3}{0+2.1}
        \TopTileAt{0+4.3}{7+2.1}
        \node[draw,circle] (b) at (11.7,7) {$B_j$};
        \draw[->] (b) -- (9.7,6);

        \draw[very thick,dashed] (0,14)--(0,17.8);
        \draw[very thick,dashed] (4.3,16.1)--(4.3,17.8);
        \draw [decorate,decoration={brace,amplitude=4pt},xshift=0pt,yshift=4pt]
        (0,17.8) -- (4.3,17.8) node [black,midway,yshift=10pt] {\footnotesize
          $> 4$};
        \draw[very thick,dashed] (0,0)--(-2,0);
        \draw[very thick,dashed] (4.3,2.1)--(-2,2.1);
        \draw [decorate,decoration={brace,amplitude=4pt},xshift=-4pt,yshift=0pt]
        (-2,0) -- (-2,2.1) node [black,midway,xshift=-14pt] {\footnotesize
          $\leq e$};



        \node[draw,fill,circle,inner sep=1pt] () at (4.8,4) {};

      \end{tikzpicture}
      \caption{Top view.}
      \label{fig:LeftRightIntersection}
    \end{subfigure}
    \begin{subfigure}{.5\textwidth}
      \centering
      \begin{tikzpicture}[scale=0.6]
        \SideTileAt{0}{0}
        \SideTileAt{4.3}{1.2}
        \SideTileAt{3.7}{-5.8}

        \node[draw,circle] (a) at (-0.6,3) {$B_i$};
        \draw[->] (a) -- (0.9,2.5);
        \node[draw,circle] (b) at (10,3) {$B_j$};
        \draw[->] (b) -- (8.4,2.5);
        \node[draw,circle] (c) at (8.5,-7.5) {$B_k$};
        \draw[->] (c) -- (6.9,-7);
        \draw [decorate,decoration={brace,amplitude=2pt},xshift=0pt,yshift=0pt]
        (5,1.2-0.1) -- (5,0.1) node [black,midway,xshift=12pt] {\footnotesize
          $< \epsilon$};
        \draw [decorate,decoration={brace,amplitude=2pt},xshift=0pt,yshift=0pt]
        (5,-0.1) -- (5,-0.7) node [black,midway,xshift=12pt] {\footnotesize
          $< \epsilon$};

        \draw[pattern=north west lines, pattern color=gray] (6.7,-1.8) rectangle (7.3, -0.8);
        '
        \node[draw,fill,circle,inner sep=1pt] () at (4.8,-0.2) {};

      \end{tikzpicture}
      \caption{Side view.}
      \label{fig:ThirdUnder}
    \end{subfigure}
    \caption{Typical positions of $B_i$, $B_j$, and $B_k$, viewed from $(0,0,\infty)$ and $(0,-\infty,0)$. The black dot marks the same possible origin in both figures, and the hatched area marks the intersection in the rightmost figure, which gives the final contradiction. Not drawn to scale, and the $\epsilon$-perturbation of the mats is not shown.}
  \end{figure}
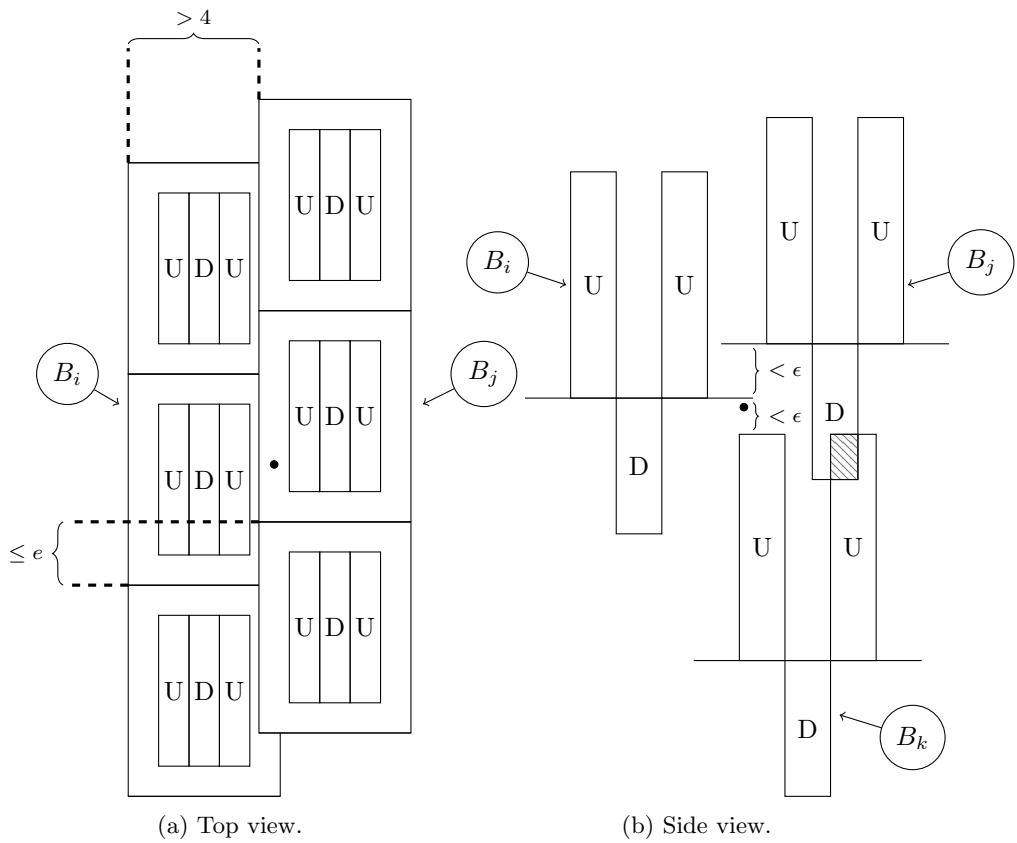

  We note that by Lemma~\ref{lem:Fundamental}, for all $i \neq j$ we have either $B_i < B_j$ or $B_i > B_j$, since the horizontal projection of each mat contains the origin and we assumed the mats to be disjoint.

  \begin{claim}
    \label{cl:xDistance}
    If $|x_i - x_j| \in [0,4]$ then $|z_i - z_j| \geq h-2\epsilon$.
  \end{claim}

  \begin{proof}
    By symmetry, we may assume $B_i < B_j$ and $x_j < x_i$, i.e. $B_i$ is below $B_j$ and to the west of it. Since $x_i - x_j \in [0,4]$, and since all bridges have the same west-east width $1$, the horizontal projection of the westmost border unbridge of $B_j$ intersects that of one of the up bridges of $B_i$:
    \begin{itemize}
    \item if $x_i - x_j \in [0, 1]$, then the projection of the west blade of the west unbridge of $B_j$ intersects the projection of the west up bridge of $B_i$,
    \item if $x_i - x_j \in [1, 2]$, then the projection of the east blade of the west unbridge of $B_j$ intersects the projection of the west up bridge of $B_i$,
    \item if $x_i - x_j \in [2, 3]$, then the projection of the west blade of the west unbridge of $B_j$ intersects the projection of the east up bridge of $B_i$,
    \item if $x_i - x_j \in [3, 4]$, then the projection of the east blade of the west unbridge of $B_j$ intersects the projection of the east up bridge of $B_i$.
    \end{itemize}
    The border unbridges stay at height $0$ everywhere in $T^3$, so all their points are at height at most $\epsilon$ in $B_i$. The up bridges of $T^3$ are at height $h$ for at least two thirds of the south-north distance of $T^3$, so in $B_j$ they are at height at least $h - \epsilon$. From $B_i < B_j$ and $|y_i - y_j| \leq 2 e$ we obtain $|z_i - z_j| \geq h - 2\epsilon$.
  \end{proof}

  It follows from Claim~\ref{cl:xDistance} that if all pairs $i \neq j$ satisfied $|x_i - x_j| \in [0,4]$, then we would have $|z_i - z_j| \geq h - 2\epsilon$ for each of them. But it is a simple observation about the geometry of the real line that if $|z_i - z_j| \geq x = h - 2\epsilon$ for all $i \neq j \in \{1,2,3\}$, then some pair satisfies $|z_i - z_j| \geq 2x = 2h - 4\epsilon$. If $B_i < B_j$, then the highest point of $B_i$ is at height at most $z_i + h + \epsilon$, and the lowest point of $B_j$ is at height at least $z_j - b - \epsilon$.
  We now have
  \[ (z_j - b - \epsilon) - (z_i + h + \epsilon) 
    \geq h - b - 6\epsilon \geq \epsilon, \]
  so in particular the points $(0,0,r_i) \in B_i$ and $(0,0,r_j) \in B_j$ are at distance at least $\epsilon$ from each other.
  This contradicts the assumption $|r_i - r_j| < \epsilon$.

  We have shown that there exist $i, j$ such that $|x_i - x_j| \in (4,5]$. By left-right symmetry we may assume $x_i - x_j \in (4,5]$, i.e. $B_i$ is to the west of $B_j$. Then the points $(0,0,r_i)$ and $(0,0,r_j)$ lie on the east unbridge of $B_i$ and the west unbridge of $B_j$. Since the unbridges stay at height $0$ in $T^3$, we must have $|z_i - z_j| \leq 3 \epsilon$, and by vertical translation we may assume $z_i, z_j \in [0, 3\epsilon]$. Let $B_k$ be the third mat, i.e. $\{i,j,k\} = \{1,2,3\}$. Then we have $r_i, r_j, r_k \in [-2\epsilon, 5\epsilon]$.

  \begin{claim}
    \label{cl:Below}
    We have $B_k < B_i$ and $B_k < B_j$.
  \end{claim}

  \begin{proof}
    Suppose $B_k$ is above $B_i$, the other case being analogous. If $|x_i - x_k| \in [0,4]$, then by Claim~\ref{cl:xDistance} we have $z_k - z_i \geq h - 2\epsilon$, i.e. $z_k \geq z_i + h - 2\epsilon$. 
    The lowest point of $B_k$ is then at height at least
    \[ z_k - b - \epsilon \geq z_i + c - 3\epsilon > 5 \epsilon, \]
    which contradicts the observation $r_k \in [-2\epsilon, 5\epsilon]$.

    Suppose then $|x_i - x_k| \in (4,5]$. Since $x_i - x_j \in (4,5]$ and the projections of $B_j$ and $B_k$ intersect, we must also have $x_i - x_k \in (4,5]$, so that $|x_j - x_k| < 1$. The points $(0,0,r_j)$ and $(0,0,r_k)$ then lie on the left unbridges of $B_j$ and $B_k$, so that $|z_j - r_j|, |z_k - r_k| < \epsilon$, from which it follows that $|z_j - z_k| < 3 \epsilon$.
    On the other hand, Claim~\ref{cl:xDistance} implies $|z_j - z_k| \geq h - 2 \epsilon$, a contradiction with $\epsilon < h / 7$.
  \end{proof}

  We now have two mats $B_i$ and $B_j$ whose projections intersect only at their border unbridges, and a third mat $B_k$ below both of them. 
  We also have $x_i \in [4,5]$ and $x_j \in [0,1]$, and $z_i, z_j \in [0, 3 \epsilon]$ as already stated. Then $|x_i - x_k| \leq 4$ or $|x_j - x_k| \leq 4$, and in either case Claim~\ref{cl:xDistance} implies $z_k \leq 5 \epsilon - h$.
  Since the up bridges are the only parts of $T^3$ with positive height, we must have $x_k \in [1,2] \cup [3,4]$, that is, the point $(0,0,r_k)$ is on an up bridge of $B_k$. 

  By the left-right symmetry of the situation, we may assume $x_k \in [1, 2]$, so that $(0,0,r_k)$ is somewhere on the west up bridge of $B_k$. Then the horizontal projection of the east up bridge of $B_k$ intersects that of the down bridge of $B_j$. Since $d > \frac{2e}{3}$, we can find a horizontal coordinate $(x,y) \in \R^2$ that is on the projection of the top part of the east up bridge of $B_k$ and on the projection of the bottom part of the down bridge of $B_j$.

  Every point on the top part of the east up bridge of $B_k$ is at height at least $z_k + h - \epsilon \geq - 4 \epsilon$, while every point on the bottom part of the down bridge of $B_j$ is at height at most $-b + \epsilon < -4 \epsilon$. Since $B_k < B_j$, this contradicts Lemma~\ref{lem:Fundamental}. This finishes our case analysis, as we have shown that our assumptions were contradictory.
\end{proof}

We note that for getting the result $\underline \alpha(X) < \infty$, one could replace the mat from this section by essentially any mat with a sufficiently chaotic set of bridges. However, to get $\underline \alpha(X) = 2$ (the best we are able to obtain), one has to be quite careful; for example, the mats in Figure~\ref{fig:DoNotWork}, and obviously versions where up-slopes and down-slopes are exchanged, \emph{do not work} no matter how the lengths $a,b,c,d$ are chosen, in the sense that Lemma~\ref{lem:T3Lemma} is false, and thus one would at most obtain $\underline \alpha(X) \leq 3$ with our proof, for similar reasons as those outlined in the proof of Proposition~\ref{prop:SimplerExample} in Section~\ref{sec:SimplerExample}.

\begin{figure}
\[
\begin{smallmatrix}
\tile{ } & \tile{ } & \tile{ } & \tile{ } & \tile{ } \\
\rtile{ } & \lrtile{s} & \lrtile{n} & \lrtile{s} & \ltile{ } \\
\rtile{ } & \lrtile{ } & \lrtile{n} & \lrtile{ } & \ltile{ } \\
\rtile{ } & \lrtile{ } & \lrtile{ } & \lrtile{ } & \ltile{ } \\
\rtile{ } & \lrtile{ } & \lrtile{s} & \lrtile{ } & \ltile{ } \\
\rtile{ } & \lrtile{n} & \lrtile{s} & \lrtile{n} & \ltile{ } \\
\tile{ } & \tile{ } & \tile{ } & \tile{ } & \tile{ } \\
\end{smallmatrix}
\;\;\;\;\;
\begin{smallmatrix}
\tile{ } & \tile{ } & \tile{ } & \tile{ } & \tile{ } \\
\rtile{ } & \lrtile{s} & \lrtile{n} & \lrtile{s} & \ltile{ } \\
\rtile{ } & \lrtile{s} & \lrtile{n} & \lrtile{s} & \ltile{ } \\
\rtile{ } & \lrtile{ } & \lrtile{ } & \lrtile{ } & \ltile{ } \\
\rtile{ } & \lrtile{n} & \lrtile{s} & \lrtile{n} & \ltile{ } \\
\rtile{ } & \lrtile{n} & \lrtile{s} & \lrtile{n} & \ltile{ } \\
\tile{ } & \tile{ } & \tile{ } & \tile{ } & \tile{ } \\
\end{smallmatrix}
\;\;\;\;\;
\begin{smallmatrix}
\tile{ } & \tile{ } & \tile{ } & \tile{ } & \tile{ } \\
\rtile{ } & \lrtile{s} & \lrtile{ } & \lrtile{s} & \ltile{ } \\
\rtile{ } & \lrtile{s} & \lrtile{ } & \lrtile{s} & \ltile{ } \\
\rtile{ } & \lrtile{ } & \lrtile{ } & \lrtile{ } & \ltile{ } \\
\rtile{ } & \lrtile{n} & \lrtile{ } & \lrtile{n} & \ltile{ } \\
\rtile{ } & \lrtile{n} & \lrtile{ } & \lrtile{n} & \ltile{ } \\
\tile{ } & \tile{ } & \tile{ } & \tile{ } & \tile{ } \\
\end{smallmatrix}
\;\;\;\;\;
\begin{smallmatrix}
\tile{ } & \tile{ } & \tile{ } & \tile{ } & \tile{ } \\
\rtile{ } & \lrtile{s} & \lrtile{s} & \lrtile{s} & \ltile{ } \\
\rtile{ } & \lrtile{ } & \lrtile{s} & \lrtile{ } & \ltile{ } \\
\rtile{ } & \lrtile{ } & \lrtile{ } & \lrtile{ } & \ltile{ } \\
\rtile{ } & \lrtile{ } & \lrtile{n} & \lrtile{ } & \ltile{ } \\
\rtile{ } & \lrtile{n} & \lrtile{n} & \lrtile{n} & \ltile{ } \\
\tile{ } & \tile{ } & \tile{ } & \tile{ } & \tile{ } \\
\end{smallmatrix}
\]
\caption{Mat structures for which Lemma~\ref{lem:T3Lemma} does not hold.}
\label{fig:DoNotWork}
\end{figure}

\section{The two-dimensional substitution}
\label{sec:ThreeSparse}

We now consider a two-dimensional substitution which generates descriptions of mats like those studied in the previous section. Let $a,b,c,d \in \N$, let
\[ \Sigma = \{\ltile{ }, \tile{ }, \rtile{ }, \lrtile{ }, \ltile{n}, \tile{n}, \rtile{n}, \lrtile{n}, \ltile{s}, \tile{s}, \rtile{s}, \lrtile{s} \}, \]
and let $\tile{x}$ be a variable ranging over tiles $\{\tile{ }, \tile{n}, \tile{s}\}$. The following is the image of $\tile{x}$ under $\tau$:
\[\begin{smallmatrix}
\tile{x} & \tile{x} & \tile{x} & \tile{x} & \tile{x} & \mbox{ $\}$ $a$ times} \\
\rtile{ } & \lrtile{s} & \lrtile{n} & \lrtile{s} & \ltile{ } & \mbox{ $\}$ $b$ times} \\
\rtile{ } & \lrtile{s} & \lrtile{ } & \lrtile{s} & \ltile{ } & \mbox{ $\}$ $c$ times} \\
\rtile{ } & \lrtile{ } & \lrtile{ } & \lrtile{ } & \ltile{ } & \mbox{ $\}$ $d$ times} \\
\rtile{ } & \lrtile{n} & \lrtile{ } & \lrtile{n} & \ltile{ } & \mbox{ $\}$ $c$ times} \\
\rtile{ } & \lrtile{n} & \lrtile{s} & \lrtile{n} & \ltile{ } & \mbox{ $\}$ $b$ times} \\
\tile{x} & \tile{x} & \tile{x} & \tile{x} & \tile{x} & \mbox{ $\}$ $a$ times} \\
\end{smallmatrix}\]
For tiles with jagged west borders, i.e. $\ltile{x} \in \{\ltile{ }, \ltile{n}, \ltile{s}\}$, we define $\tau(\ltile{x})$ by adding a west border to each of the tiles in the westmost south-north column of $\tau(\tile{x})$, and similarly for east borders.

Here, as in the previous section, $a,b,c,d \in \N$ are constants that are chosen later, and again let $e = 2(a+b+c)+d$ and $h = b+c$. The idea is that for suitable choices, the iterated substitutions $\tau^k$ will generate mats that, once rescaled, are close to $T$ in the mat metric.
Since the macrotiles $\tau^k(s)$ for $s \in \Sigma$ consist of copies of $\tau^{k-1}(s')$ for $s' \in \Sigma$, they will be perturbed versions of $T$, with the perturbation parameter $\epsilon$ depending on $a, b, c, d$.
We will show that as long as the parameters are chosen suitably, the results of Section~\ref{sec:Mats} apply to these surfaces.
Concretely, we will see that the choice
\begin{align}
  \label{eq:GoodParams}
  a & {} = 16, & b & {}= 1316, & c & {}= 1504, & d & {} = 11344
\end{align}
works, with $\epsilon = 187$.

Any rectangle $R \in \Sigma^{m \times n}$ colored with the tiles in such a way that the slopes are consistent (i.e. the total slope of any cycle that does not cross jagged edges is $0$) can be interpreted as a three-dimensional surface $\Surf(R) \subset \R^3$ in at least one way, and up to translation, there is a unique such way if $R$ is connected by non-jagged tile edges.
In particular, this is the case for $S = \Surf(\tau^k(s))$ for all $k \in \N$ and $s \in \Sigma$.
We call $S$ a \emph{geometric macrotile} of level $k$.
We fix the default position of $S = \Surf(\tau^k(s))$ so that its southwest corner is at the origin if $k = 0$, and at $(0, 0, -a (2 a)^{k-1})$ if $k \geq 1$.
In the latter case, this implies that the z-coordinate of each flat tile on the east and west border of $S$ is $0$.
We say that the \emph{baseline} of a translate $(x,y,z) + S$ is $z$.

\begin{lemma}
  \label{lem:SufficientParams}
  Let $a,b,c,d \in \N$ satisfy $a > 0$ and
  \begin{align}
    \frac{b + c}{2a-1} + 1 + 6a & {} < \min(b/7, c/8), \label{eq:bcbound} \\
    d & {} \geq 4(a+b+c). \label{eq:dbound}
  \end{align}
  Denote $e = 2(a+b+c) + d$.
  Then for all $n \geq 1$ and any combination of three tiles $A \in \Sigma^{1 \times 3}$ the following hold:
  \begin{enumerate}
  \item \label{item:1}The surface $M = \Surf(\tau^n(A))$, rescaled by $(5^{-n+1}, e^{-n+1}, (2a)^{-n+1})$ axiswise and translated suitably, is a mat with projection $[0,5] \times [-e,2e]$.
  \item \label{item:2} $\epsilon = \frac{b + c}{2a-1} + 1 + 6a$ satisfies the assumptions of Lemma~\ref{lem:T3Lemma} for the surface $T^3$ corresponding to these choices of $a,b,c,d$.
  \item \label{item:3} $M$ is $\epsilon$-close to $T^3$ in the mat metric.
  \end{enumerate}
\end{lemma}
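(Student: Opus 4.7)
I would proceed by induction on $n$, handling items~1 and~3 together; item~2 reduces to an algebraic check. Indeed, $\epsilon < \min(b/7, c/8)$ is precisely~\eqref{eq:bcbound}, while $d \geq 2e/3$ is equivalent to $d \geq 4(a+b+c)$ via $e = 2(a+b+c)+d$, which is~\eqref{eq:dbound}.

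For the base case $n = 1$, no rescaling is applied (the exponents $n - 1$ are zero), and $\Surf(\tau(A))$ for $A$ a south-north stack of three tiles is, by construction of $\tau$, literally the surface $T^3$ from Section~\ref{sec:Mats}; items~1 and~3 hold trivially with deviation~$0$. For the inductive step $n \geq 2$, I would exploit the recursive structure of $\tau$: the surface $\Surf(\tau^n(A))$ decomposes as a $5 \times 3e$ array of level-$(n-1)$ submacrotiles $\Surf(\tau^{n-1}(s'))$, indexed by the tiles of $\tau(A)$. After the rescaling prescribed by item~1, each submacrotile appears as a copy of its own rescaled form, further shrunk by $(5^{-1}, e^{-1}, (2a)^{-1})$. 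By the inductive hypothesis each submacrotile is $\epsilon$-close to an appropriately translated copy of $T$, and these translates assemble exactly into $T^3$, which is the geometric content of the design of $\tau$. This yields the mat structure for $M_n$ (item~1) by south-north concatenation of submacrotile strings, with east/west boundary alignment guaranteed by the fact that the outer boundary strings of every macrotile are flat at height~$0$ by construction.

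For item~3, I would track the error through the recursion. Each level-$(n-1)$ submacrotile contributes an internal deviation of at most $\epsilon/(2a)$ to $d(M_n, T^3)$ after the extra vertical rescaling by $1/(2a)$. The per-level alignment error then comes from two sources: (i)~the finite-width outer flat rings of $\tau$, totalling at most $1 + 6a$ (where $6a$ accounts for the $a$-wide flat rings at both ends of each of the three stacked tiles in $A$, plus $1$ for unit tile thickness), and (ii)~the bridge height $h = b + c$, which, summed geometrically across all levels of nesting, contributes at most $(b+c)/(2a-1)$. The resulting recurrence admits $\epsilon = (b+c)/(2a-1) + 1 + 6a$ as its fixed point, so the bound $d(M_n, T^3) \leq \epsilon$ propagates through all levels.

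\textbf{Main obstacle.} The subtle point is the consistent pairing of strings of $M_n$ with strings of $T^3$ across submacrotile boundaries: the $\epsilon$-approximation at each level must extend to the top level without a drift that would violate the Hausdorff-type mat metric. Lemma~\ref{lem:MatNeighbor}, which assigns to each string of a nearby mat a canonical $2\epsilon$-close ideal string of $T$, is the key tool for tracking strings coherently through every level of the recursion, and for justifying that the per-level error decomposition above captures all contributions.
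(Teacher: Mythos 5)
Your final decomposition of the error --- a geometric series of nested bridge heights summing to $(b+c)/(2a-1)$, plus $6a$ coming from the three outer tiles of $A$, plus $1$ --- is exactly the decomposition the paper uses, and your treatment of items~1 and~2 matches the paper's. However, the inductive mechanism you propose for item~3 does not close. A minor point first: the base case is misstated, since for sloped tiles in $A$ the surface $\Surf(\tau(A))$ is not literally $T^3$ (its $a$-wide border rows carry slope $\pm 1$, so it already deviates by up to $6a$); this alone is harmless because $6a < \epsilon$. The real problem is the fixed-point claim. If you recurse on ``$d(M_{n-1},T^3)\le\epsilon$'' and let each shrunk level-$(n-1)$ submacrotile contribute $\epsilon/(2a)$, you must also pay, at every level, the mismatch between a shrunk copy of $T$ and its ideal inside the next-level picture, which is a \emph{flat} (or unit-sloped) unit tile rather than a shrunk $T$ --- an extra $(b+c)/(2a)$ --- on top of the alignment term $1+6a$. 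The inequality $\epsilon \ge \epsilon/(2a) + (b+c)/(2a) + 1 + 6a$ reduces to $2a \le 2a-1$, so $\epsilon$ is not a fixed point of any recurrence assembled from the contributions you list; carried out literally, your scheme only yields a bound strictly larger than $\epsilon$, and item~3 (and hence the applicability of Lemma~\ref{lem:T3Lemma} with this exact $\epsilon$) does not follow.

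The paper avoids this by not inducting on the mat distance at all. The induction is on the exact statement that, for the flat stacked tile, the rescaled south and north borders (baselines) of all constituent $(n-1)$-tiles sit at precisely the heights of $T^3$ --- this is what the vertical scale factor $2a$ is designed for. One then bounds the internal fluctuation of a level-$(n-1)$ tile relative to its own baseline by the closed form $h_{n-1}=(b+c)\frac{(2a)^{n-1}-1}{2a-1}$, i.e.\ at most $(b+c)/(2a-1)$ after rescaling, and adds $6a$ (possible slopes of the three outer tiles of $A$) and $1$ (baseline discretization) exactly once, at the top level. This once-only accounting of the $1+6a$ term, justified by the exact baseline alignment, is the ingredient missing from your write-up; your appeal to Lemma~\ref{lem:MatNeighbor} for coherent string tracking is not needed for this lemma (and is not used in the paper's proof).
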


Note that it is easy to satisfy the assumptions of this lemma, for example by choosing  $2a-1 > 32$, $b = c > 16(6a + 1)$ and then $d \geq 4(a+b+c)$. The unique choice that satisfies them and minimizes the south-north length of the tile is~\eqref{eq:GoodParams}, with length $e = 17016$, and then Lemma~\ref{lem:T3Lemma} applies with $\epsilon = 187$. We provide these numbers only for concreteness; much smaller ones could presumably be obtained by optimizing Lemma~\ref{lem:T3Lemma}.

\begin{proof}
Note that we need $a > 0$ since otherwise the scaling is not well-defined. Item~\ref{item:1} follows because the images of tiles in the substitution $\tau^n$ have constant size $(5^n, e^n)$.
Since~\eqref{eq:dbound} is equivalent to $d \geq \frac{2e}{3}$ and we explicitly assume~\eqref{eq:bcbound} for $\epsilon$, the assumptions of Lemma~\ref{lem:T3Lemma} are satisfied, showing item~\ref{item:2}.

We now show item~\ref{item:3}. Note first that the vertical scaling factor $2a$ is chosen so that in the scaled surface corresponding to the level-$n$ macrotile $\tau^n\left(\begin{smallmatrix} \smalltile{ } \\
\smalltile{ } \\
\smalltile{ }
\end{smallmatrix}\right)$, the south and north borders of all the constituent $(n-1)$-tiles are precisely at the same height as in the $T^3$, which is easily shown by induction. Thus, we only need to study how far we get from $T^3$ due to the effects of other choices of $A$ and the internal height fluctuations in $(n-1)$-tiles.


Define $r_n = (2a)^n$ for $n \geq 0$, which is the absolute vertical difference between the north and south borders of $\Surf(\tau^n(\tile{n}))$ and $\Surf(\tau^n(\tile{s}))$ and their jagged variants. Let $h_n$ be the maximum absolute vertical difference between any point of a geometric level-$n$ macrotile and its baseline, so that $h_0 = 1$, $h_1 = b + c$ and
\[ h_n = (b + c) r_{n-1} + h_{n-1} = (b + c) \sum_{i = 0}^{n-1} r_i = (b + c) \frac{(2a)^n - 1}{2a-1} \]
for $n \geq 2$.
Then for any choice of $A$, the rescaled surface $M$ is at distance $\epsilon' = h_{n - 1}/(2a)^{n-1} + 1 + 6a$ from $T^3$.
Here $6a$ comes from the ascent or descent by $2a$ in each scaled $n$-tile, depending on the tile type, $h_{n - 1}/(2a)^{n-1}$ comes from internal fluctuations in the constituent $(n-1)$-tiles with respect to their baseline, and $1$ is an upper bound on the difference between the baseline of a rescaled constituent $(n-1)$-tile and the height of the corresponding position in $T^3$.
To conclude, observe that
\[ h_{n - 1}/(2a)^{n-1} = (b + c) \frac{(2a)^{n-1}-1}{(2a-1)(2a)^{n-1}} \leq \frac{b + c}{2a-1} \]
so $\epsilon' \leq \frac{b + c}{2a-1} + 1 + 6a = \epsilon$.
\end{proof}

\section{Zero-gluing closures}

Let $G$ be a group with some fixed left-invariant metric. The \emph{$r$-connected components} of a configuration $x \in \Sigma^G$ are the following: Consider nonzero cells in $x$ to be vertices in a graph and put edges between vertices at word-distance $r$ to obtain a graph. Then the connected components of $x$ are the subsets of $G$ that are connected components in this graph. The \emph{$r$-connected component configurations} of $x$ are the configurations where all but one $r$-connected components are turned to $0$. We usually omit $r$ if it is obvious from context, and we often identify connected components and the corresponding configurations, which should not cause confusion. We denote the set of $r$-connected component configurations of $x \in \Sigma^G$ by $C_r(x) \subset \Sigma^G$

In our example, the group is $\Z^3$ and the metric is the $\ell_\infty$ norm,\footnote{If we use the word norm for the standard generators, we need to use either $r = 2$, or talk about essential sparseness instead of sparseness, as $r = 1$ with the word metric would add artificial width in the construction, as it does not allow a surface to move vertically without locally stacking two nonzero cells.} and $r = 1$. A subshift $X \subset \Sigma^G$ is \emph{$r$-zero-gluing} if 
\[ \forall x \in \Sigma^G: (x \in X \iff \forall z \in C_r(x): z \in X), \]
that is, $x \in \Sigma^G$ is in $X$ if and only if its $r$-connected component configurations are in $X$.

We say two vertices or sets of vertices \emph{see} each other if the minimal distance between them is at most $r$. Then equivalently, a zero-gluing subshift is one where we can freely turn a connected component to $0$ without affecting other components, and where we can conversely glue two configurations together whenever their $r$-supports do not see each other.

Let $X \subset \Sigma^G$ be a subshift, where $0 \in \Sigma$. Then the corresponding full shift $\Sigma^G$ is zero-gluing with radius $r$ for all $r$. For a given $r$, the intersection of all $r$-zero-gluing subshifts containing $X$ is the \emph{$r$-zero-gluing closure of $X$}, denoted $\ZGC(X)$.

\begin{lemma}
  \label{lem:GluingIsGluing}
  The $r$-zero-gluing closure of a subshift is $r$-zero-gluing.
\end{lemma}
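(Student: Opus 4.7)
The plan is to establish the abstract fact that the property of being $r$-zero-gluing is preserved under arbitrary intersections; the lemma then follows immediately, since by definition $\ZGC(X)$ is the intersection of the family $\mathcal{F}$ of all $r$-zero-gluing subshifts of $\Sigma^G$ that contain $X$, and this family is nonempty because the full shift $\Sigma^G$ is itself $r$-zero-gluing (both sides of the defining biconditional hold trivially for $\Sigma^G$).

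First I would observe that an arbitrary intersection $Y = \bigcap_{Z \in \mathcal{F}} Z$ of closed, $G$-invariant sets is again closed and $G$-invariant, so $Y$ is a subshift; this needs no extra input. What remains is to verify the biconditional
\[ x \in Y \iff \forall z \in C_r(x): z \in Y \]
for every $x \in \Sigma^G$, and both directions reduce to passing a universal quantifier through the intersection. If $x \in Y$, then $x \in Z$ for every $Z \in \mathcal{F}$; the zero-gluing property of each $Z$ yields $z \in Z$ for every $z \in C_r(x)$, and intersecting over $Z$ gives $z \in Y$. Conversely, if every $z \in C_r(x)$ lies in $Y$, then in particular every such $z$ lies in every $Z \in \mathcal{F}$, so the zero-gluing property of $Z$ forces $x \in Z$, and hence $x \in Y$.

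The main obstacle is, honestly, that there isn't one: the argument is a routine verification that a universally-quantified biconditional survives arbitrary intersections, using nothing about the specific geometry of $G$, the metric, or the seed subshift $X$. This is precisely the observation that justifies the terminology \emph{closure} and makes the definition of $\ZGC(X)$ meaningful, paralleling the standard fact that topological or algebraic closures are well-defined because the underlying property is a Moore-family condition stable under arbitrary intersection.
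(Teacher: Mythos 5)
Your proof is correct and follows essentially the same route as the paper: both directions are verified by pushing the universal quantifier over all $r$-zero-gluing subshifts $Z \supseteq X$ through the intersection defining the closure. The only difference is that you additionally spell out the nonemptiness of the family (via the full shift) and the fact that the intersection is again a subshift, details the paper leaves implicit.
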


\begin{proof}
  Let $Y$ be the $r$-zero-gluing closure of a subshift $X$, and suppose $y \in Y$ and that $c$ is an $r$-connected component configuration of $y$. If $Z$ is $r$-zero-gluing and contains $X$, then $y \in Y$ implies $y \in Z$. Since $Z$ is $r$-zero-gluing, we have $c \in Z$. But then $c$ is in the intersection of all such $Z$, so $c \in Y$. Conversely, suppose that $y \in \Sigma^G$ and every $r$-component $c$ of $y$ is in $Y$. If $Z$ is $r$-zero-gluing and contains $X$, then every $r$-component of $y$ is in $Z$, so by definition also $y$ is in $Z$. But then $y$ is in the intersection of all such $Z$, thus in $Y$.
\end{proof}

Concretely, the $r$-zero-gluing closure of a subshift $X$ is the subshift where a configuration is valid if and only if its $r$-connected component configurations are $r$-connected component configurations of confiugrations in $X$.

\begin{lemma}
  \label{lem:00SoficGluing}
  Let $X \subset A^G$ be $0$-to-$0$ sofic with $0$-to-$0$ SFT cover $\phi : Y \to X$ such that $Y$ is defined by forbidden patterns of diameter at most $n$ and $\phi$ has a neighborhood of diameter $d$.
  Then $X$ is $(n+d-1)$-zero-gluing.
\end{lemma}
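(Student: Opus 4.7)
The plan is to prove both directions of the biconditional defining $r$-zero-gluing for $r = n+d-1$: (i) if $x \in X$ then every $r$-component configuration $z$ of $x$ lies in $X$, and (ii) if every $r$-component configuration $z$ of $x$ lies in $X$, then $x \in X$. The foundational observation is that $Y$, being an SFT with forbidden patterns of diameter at most $n$ and containing $0^G$ (since $\phi$ is $0$-to-$0$), is $n$-zero-gluing, hence also $r$-zero-gluing. Moreover, because $r = n+d-1 \geq \max(n,d)$ for $n, d \geq 1$, distinct $r$-components of $\supp(y)$ for any $y \in Y$ are separated by more than $\max(n,d)$. Consequently, no window of diameter at most $n$ (used to check forbidden patterns) and no window $g + N$ of diameter at most $d$ (used to compute $\phi(y)_g$) can intersect two distinct $r$-components of $\supp(y)$. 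This separation lets us modify $y$ by zeroing out any union of $r$-components: the result stays in $Y$, and $\phi$ of the modified configuration equals $x$ at positions $g$ where $g + N$ still meets a retained component, and $0$ elsewhere.

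For direction (i), fix a preimage $y \in Y$ of $x$ and an $r$-component $K$ of $\supp(x)$ with component configuration $z$. Call an $r$-component $C$ of $\supp(y)$ \emph{active} if $C \cap (K + N) \neq \emptyset$, and let $\tilde y$ be obtained from $y$ by zeroing out every inactive component. Then $\tilde y \in Y$ by $r$-zero-gluing. For $g \in K$, the fact that $x_g \neq 0$ forces $g + N$ to meet $\supp(y)$ in some component $C$, which must be active since $g \in K$ gives $C \cap (g + N) \subseteq C \cap (K + N)$; hence $\phi(\tilde y)_g = x_g$. For $g \notin K$, the separation $d(K, K_j) \geq n + d$ between distinct $r$-components of $\supp(x)$, combined with the sub-diameter nature of $g + N$, rules out $g + N$ meeting an active component (without contradicting the $r$-separation of components in $\supp(y)$), so $\phi(\tilde y)_g = 0$. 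Thus $\phi(\tilde y) = z$ and $z \in X$. For direction (ii), given $z_i \in X$ for every $r$-component $K_i$ of $\supp(x)$, pick preimages $y_i \in Y$ and apply the construction from (i) to each to obtain $\tilde y_i \in Y$ with $\phi(\tilde y_i) = z_i$ and $\supp(\tilde y_i)$ concentrated in components active for $K_i$; these supports are pairwise $r$-separated, so $y := \sum_i \tilde y_i$ is well-defined in $\Sigma^G$ and lies in $Y$ by $r$-zero-gluing. A pointwise check — for each $g$, the window $g + N$ meets the support of at most one $\tilde y_i$ — shows $\phi(y) = x$.

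The main technical obstacle, present in both directions, is verifying that an active $r$-component $C$ of $\supp(y)$ for one component $K$ cannot simultaneously be active for another component $K_j$ with $j \neq 1$: equivalently, that $C$ cannot intersect both $K + N$ and $K_j + N$. The naive distance estimate yields only $|h - h'| \geq n$ for $h \in C \cap (K + N)$ and $h' \in C \cap (K_j + N)$, which does not by itself preclude $h, h'$ lying in the same $r$-component. This is where the $0$-to-$0$ hypothesis on $\phi$ enters essentially: a spanning $C$ would give a nonzero $y' = y \cdot \mathbb{1}_C \in Y$ (via $r$-zero-gluing of $Y$) whose image $\phi(y')$ is nonzero in $\supp(x)$ and vanishes on the ``gap'' between $K$ and $K_j$; a truncation argument along the $r$-path in $C$ connecting the two regions, combined with the global $0$-to-$0$ condition $\phi^{-1}(0^G) = \{0^G\}$, yields the required contradiction and thereby legitimizes both constructions above.
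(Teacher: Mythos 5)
Your overall cut-and-glue plan is the same as the paper's: pass to a preimage in $Y$, use the fact that windows of diameter at most $n$ (for the forbidden patterns) or $d$ (for the local rule of $\phi$) cannot meet two distinct $(n+d-1)$-components, then delete or merge components inside the SFT $Y$ and push the result through $\phi$. Those preparatory observations are fine. The problem is the step you yourself flag as the main obstacle: that one $r$-component $C$ of $\supp(y)$ cannot bridge two distinct $r$-components $K, K_j$ of $\supp(x)$. Your proposed resolution --- ``a truncation argument along the $r$-path in $C$, combined with the global $0$-to-$0$ condition $\phi^{-1}(0^G)=\{0^G\}$'' --- is only asserted, and no such argument exists from that hypothesis alone. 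Concretely, take $G=\Z$, alphabet $\{0,1,a_1,\dots,a_k\}$, nearest-neighbor SFT $Y$ in which each $a_i$ ($i<k$) forces $a_{i+1}$ to its east, each $a_i$ ($i\geq 2$) forces $a_{i-1}$ to its west, $a_k$ forces $1$ to its east, and $1$ forces $a_k$ to its west; let $\phi$ be the letter map sending $1\mapsto 1$ and all other symbols to $0$. This cover is $0$-to-$0$, with $n$ and $d$ bounded independently of $k$, yet every preimage of a configuration with two $1$s at distance $k+1$ contains a forced hidden chain $a_1\cdots a_k$ whose support bridges the two visible components; truncating that chain immediately creates a forbidden pattern, so zeroing one visible component has no preimage at all. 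Indeed the image here is the shift ``$1$s at pairwise distance greater than $k$,'' which is not $r$-zero-gluing for any $r<k$, so the contradiction you are hoping to derive at that point simply cannot follow from the stated global $0$-to-$0$ condition; your separation estimate $|h-h'|\geq n$ is, as you suspected, genuinely insufficient.

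The paper's proof closes this hole differently: it reads the $0$-to-$0$ hypothesis as giving the containment $\supp(y)\subseteq \supp(x)\cdot N$ for a preimage $y$ of $x$, i.e.\ hidden (nonzero but invisible) cells occur only within the $\phi$-neighborhood of visible cells. Granting that containment, the parts of $\supp(y)$ attached to distinct $(n+d-1)$-components of $\supp(x)$ are automatically about $n$ apart, so one can delete the far part (forward direction) or glue the separate preimages $y^H$ with $\supp(y^H)\subseteq H\cdot N$ (backward direction) without any bridging analysis; this containment does hold for the cover actually used later in the paper, which is a symbol map sending nonzero symbols to nonzero symbols. So the missing ingredient in your write-up is exactly this support containment (or an equivalent support-preserving assumption on the cover): without importing it, the key claim you defer to a ``truncation argument'' is false in general, and with it, the rest of your argument collapses to the paper's short proof.
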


\begin{proof}
  Let $x \in A^G$ be arbitrary.
  If $x \in X$, then there exists a preimage $y \in \phi^{-1}(x)$.
  Since $\phi$ is $0$-to-$0$, we have $\supp(y) \subset \supp(x) \cdot N$ where $N \subset G$ is the neighborhood of $\phi$.
  Let $H$ be an $(n+d-1)$-connected component of $x$.
  Then $\supp(y) \cap H$ and $K := \supp(y) \setminus H$ have distance at least $n-1$.
  By our choice of $n$ we can replace $y|_K$ by $0$-symbols and obtain a new configuration $z \in Y$.
  Then $\phi(z) \in X$ is the configuration obtained from $x$ by replacing $G \setminus H$ by $0$-symbols.

  Conversely, suppose that for every $(n+d-1)$-connected component $H$ of $x$, changing $x|_{G \setminus H}$ into $0$-symbols results in some $z^H \in X$.
  Then $z^H$ has a preimage $y^H \in Y$ with $\supp(y^H) \subset H \cdot N$, and since these sets are $n-1$-separated for distinct $H$, we can glue the $y^H$ into a single configuration $y \in Y$ with $y|_{H N} = y^H|_{H N}$ for each $H$.
  Then $\phi(y)|_H = \phi(y^H)|_H = z^H|_H = x|_H$ and for each $g \in G$ outside these components, $\phi(y)_g = 0 = x_g$.
  Thus $x = \phi(y) \in X$.
\end{proof}

\section{The sparse subshift $Z$}
\label{sec:DefOfZ}

In this section we define our example $Z$ of a zero-gluing subshift and prove its projective subdynamics is 2-sparse but not 1-sparse. In Section~\ref{sec:SFT} we prove that it is a $0$-to-$0$ sofic image of a deterministic SFT, and in Section~\ref{sec:WangCubes} we implement it using Wang cubes, obtaining the example claimed in the abstract.

We represent the surface of each tile of $\Sigma$ as a three-dimensional cube so that a flat surface is represented on the bottom of a cube, and north-south ascending and descending surface pieces connect two opposite edges of the cube.
We add a fourth blank cube that contains no surface to obtain an alphabet $\Delta$ of size 4 and a non-surjective map $\psi : \Sigma \to \Delta$.
See Figure~\ref{fig:SlopeCubes}.

\begin{figure}[ht]
  \begin{center}
    \begin{tikzpicture}

      \foreach \x/\y/\i/\o/\z in {
        9/0/0/0/0,8/0/1/0/0,7/1/1/0/0,6/2/0/0/0,5/2/0/0/0,
        4/2/0/0/0,3/1/0/1/0,2/0/0/1/0,1/0/0/0/0,0/0/0/0/0,
        9/0/0/0/1,8/0/0/0/1,7/-1/0/1/1,6/-1/0/0/1,5/-1/0/0/1,
        4/-1/0/0/1,3/-1/1/0/1,2/0/0/0/1,1/0/0/0/1,0/0/0/0/1}{
        \draw (\x+\z/3,\y-\z/5) rectangle ++(1,1);
        \draw [fill=black!25] (\x+\z/3,\y-\z/5+\i) -- (\x+\z/3+1,\y-\z/5+\o) -- (\x+\z/3+1+1/3,\y-\z/5+\o-1/5) -- (\x+\z/3+1/3,\y-\z/5+\i-1/5) -- cycle;
        \draw (\x+\z/3,\y-\z/5) -- ++(1/3,-1/5) -- ++(1,0) -- ++(-1/3,1/5);
        \draw (\x+\z/3,\y-\z/5+1) -- ++(1/3,-1/5) -- ++(1,0) -- ++(-1/3,1/5);
        \draw (\x+\z/3+1/3,\y-\z/5-1/5) -- ++(0,1);
        \draw (\x+\z/3+1+1/3,\y-\z/5-1/5) -- ++(0,1);
      }
      
    \end{tikzpicture}
  \end{center}
  \caption{Representation of sloped tiles by cubes. The south-north axis is horizontal, east is toward the viewer and the z-axis is vertical. Blank cubes are not shown.}
  \label{fig:SlopeCubes}
\end{figure}
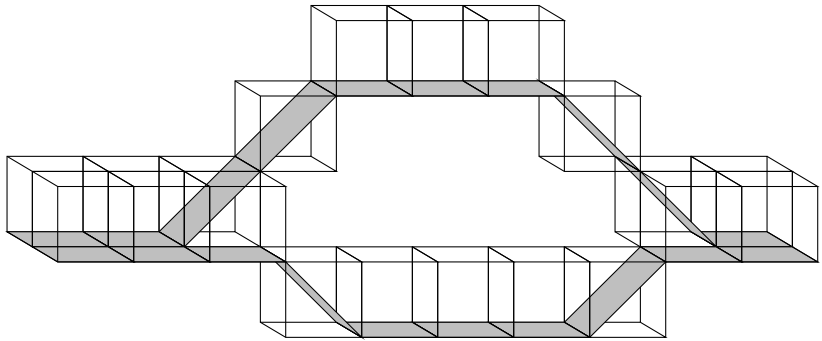

Pick the substitution $\tau$ so that the assumptions of Lemma~\ref{lem:SufficientParams} hold.
Now interpret the surface $\Surf(\tau^n(\tile{ }))$ as a finite-support configuration $x^n \in \Delta^{\Z^3}$, translated so that the $\psi$-image of the central tile is at the origin $(0,0,0)$.
Observe that the sequence $(x^n)$ converges as $n \rightarrow \infty$ and call this limit configuration $x \in \Delta^{\Z^3}$.

\begin{definition}
  The subshift $Z \subset \Delta^{\Z^3}$ is the $1$-zero-gluing closure of $\OC{x}$.
\end{definition}

\begin{theorem}
  \label{thm:ZIsSparse}
The subshift $Z$ is 1-zero-gluing and has $2$-sparse $\Z$-projective subdynamics.
\end{theorem}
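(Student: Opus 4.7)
The $1$-zero-gluing property of $Z$ is immediate from Lemma~\ref{lem:GluingIsGluing}, since $Z$ is by definition the $1$-zero-gluing closure of $\OC{x}$.

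For the $2$-sparseness of $T(Z)$, the plan is to assume for contradiction that some $z \in Z$ has three nonzero cells in a single vertical column $C = \{(i_0,j_0)\} \times \Z$ at heights $k_1 < k_2 < k_3$, and then reduce the situation to Lemma~\ref{lem:T3Lemma}. The first step is to show that these three cells must lie in three distinct $1$-connected components of $z$. Each configuration $x^n = \Surf(\tau^n(\tile{ }))$ places exactly one cube at each horizontal position in its domain, hence has at most one cell per vertical column; this property is preserved under shifts (trivially) and under pointwise limits in the product topology (since two cells simultaneously stable to nonzero values in the same column in a limit would force both to be nonzero in pre-limit configurations, contradicting the pre-limit property). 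Consequently, every $y \in \OC{x}$, and hence every $1$-connected component of such a $y$, has at most one cell per column. By the definition of the $1$-zero-gluing closure, the same holds for every $1$-connected component of $z$, so the three cells at $k_1, k_2, k_3$ must belong to three distinct, pairwise $1$-separated components $H_1, H_2, H_3$ of $z$.

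The second step is to approximate each $H_i$ by a translated $\epsilon$-perturbation of the mat $T^3$ at a common scale $n$, chosen large enough that the rescaled vertical distances $|k_i - k_j|/(2a)^{n-1}$ all fall below $\epsilon$ and that the level-$n$ macrotile structure of each $H_i$ is visible around the column $C$. Lemma~\ref{lem:SufficientParams} then yields, for each $i$, a translated mat $B_i - \vec{v}_i$ within $\epsilon$ of $T^3$ after rescaling by $(5^{-n+1}, e^{-n+1}, (2a)^{-n+1})$, with the cell of $H_i$ in $C$ corresponding to a point $(0,0,r_i) \in B_i - \vec{v}_i$ and with the $r_i$ pairwise within $\epsilon$. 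Applying Lemma~\ref{lem:T3Lemma} to $B_1, B_2, B_3$ then forces two of them to intersect, which lifts back to a shared cell between two of the components $H_i, H_j$, contradicting their pairwise $1$-separation. The main technical obstacle is precisely this scale-matching: one must ensure that a single level $n$ can always be chosen at which all three components simultaneously exhibit full $T^3$-like structure around $C$, which requires a careful analysis of which substructures can arise as $1$-connected components of configurations in $\OC{x}$ and an argument to rule out or handle components that are too small to approximate $T^3$ at the needed scale.
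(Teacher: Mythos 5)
Your proposal follows essentially the same route as the paper: reduce three cells in a column to three level-$n$ geometric macrotile structures, rescale, invoke Lemma~\ref{lem:SufficientParams} to get mats $\epsilon$-close to $T^3$ with the marked points $(0,0,r_i)$ pairwise $\epsilon$-close, and then contradict Lemma~\ref{lem:T3Lemma}; your preliminary step (each $1$-connected component of a configuration of $Z$ has at most one cell per column, hence the three cells lie in pairwise $1$-separated components, giving the disjointness hypothesis) is just a front-loaded version of the contradiction the paper draws at the end via connectedness of macrotiles. The one step you leave open, the ``scale-matching'' worry about components too small to exhibit $T^3$-structure at level $n$, is not actually an obstacle, and this is the only substantive thing missing from your write-up. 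For any fixed $n$, a complete level-$n$ geometric macrotile together with its north and south level-$n$ neighbors occupies a region of bounded diameter (depending only on $n$), and every cell of $x$ lies in such a triple; since configurations of $\OC{x}$ are limits of translates of $x$ and agreement on a large enough ball transfers this bounded structure, every cell of every $y \in \OC{x}$ lies, for every $n$, in a complete level-$n$ macrotile with north and south neighbors, all contained in its $1$-connected component (macrotiles being connected). Because the $1$-connected components of configurations of $Z$ are exactly $1$-connected components of configurations of $\OC{x}$, there are no ``small'' components at all, and a single $n$ chosen so that $\min(b/7,c/8) > m/(2a)^{n-1}$ (with $m$ the maximal vertical gap between the three cells) works simultaneously for all three components, exactly as in the paper.
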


\begin{proof}
  The first claim follows from Lemma~\ref{lem:GluingIsGluing}.
  For the second one, suppose there exist $y \in Z$ and distinct coordinates $\vec v_1, \vec v_2, \vec v_3 \in \supp(y)$ with identical x- and y-coordinates.
  For each $n \geq 0$, each $\vec v_i$ is part of a unique level-$n$ geometric macrotile $K^i_n$, which has other level-$n$ geometric macrotiles to its north and south.
  Denote by $M^i_n$ the union of these three macrotiles.
  Suppose the z-coordinates of the $\vec v_i$ differ pairwise by at most $m$, and pick $n$ so that
  $\epsilon = \min(b/7, c/8) > m/(2a)^{n-1}$. 
  By Lemma~\ref{lem:SufficientParams}, the macrotile triples $M^i_n$ are $\epsilon$-close to $T^3$ after rescaling, so by Lemma~\ref{lem:T3Lemma} some two of them intersect, say $M^i_n$ and $M^j_n$ for $i \neq j$. Since the geometric macrotiles are connected, this implies that $\vec v_i$ and $\vec v_j$ lie in the same 1-connected component of $\supp(y)$.
  But $\OC{x}$ does not have such configurations and the 1-zero-gluing closure operation cannot introduce them, so we have reached a contradiction.
\end{proof}

\section{SFT Implementation}
\label{sec:SFT}

We show that $Z$ from the previous section is $0$-to-$0$ sofic.
We choose a number $N \geq 1$, whose exact value will be determined later, and implement the 3-dimensional version of the substitution $\tau^N$ by a self-simulating SFT $X$ that is deterministic to the north and projects onto $Z$ by a $0$-to-$0$ symbol map.
We then distort it to obtain a sparse SFT that is deterministic along the z-axis.
While these SFTs are neither binary nor defined by Wang cubes, we show in Section~\ref{sec:Variants} that the construction can be modified to ensure these properties.

The construction follows the ideas of programmatic self-simulation presented in~\cite{Ga01,DuRoSh12}, but we have the additional difficulty that we can only communicate through some parts of the configuration (as some adjacent tiles of the $\Z^2$-substitution system correspond to cells with large vertical offsets in the $\Z^3$-subshift). This is related to the phenomenon studied in \cite{BaSa16}.

Since the construction is relatively complex, we list its key properties as lemmas as soon as they become apparent from the construction.
We note that~\cite{Mo89} presents a conceptually simpler way of implementing substitutive subshifts as sofic shifts, but requires more technical modifications to produce a deterministic SFT cover.

\subsection{The alphabet and local structure of $X$}

The alphabet of the SFT is denoted by $\Gamma$, which contains the special symbol $0$.
Nonzero symbols of $\Gamma$ in a configuration of $X$ are called \emph{cells}.
Each cell $g$ contains the following pieces of information, called \emph{fields}:
\begin{itemize}
\item Three elements of $\Sigma$, called the \emph{past, current and future preimage fields} of~$g$.
\item A coordinate $\vec{v} \in [0, 5^N-1] \times [0, e^N-1]$, called the \emph{address field}.
\item An element of $\{0,1,\#\}$, called the \emph{program field}.
\item An element of $\{0,1,\#\}$, called the \emph{simulation field}.
\item A tape symbol $t \in \Sigma_M$ and possibly a state $q \in Q_M$ of a Turing machine $M$, called the \emph{computation field}.
\item Two elements of $\{0,1,\#\}$, called the \emph{west and east mailbox fields}.
\end{itemize}
If the current preimage field of $g$ holds a symbol $s$ and its address field holds $\vec{v}$, then we say $g$ \emph{simulates} the symbol $r = \tau(s)_{\vec{v}}$ and denote $r = S(g)$.
The SFT cover $\phi : X \to Z$ will be defined cellwise by $\phi(g) = \psi(S(g))$ for each $g \in \Gamma \setminus \{0\}$ and $\phi(0)$ being the empty cube, where $\psi : \Sigma \to \Delta$ is the map defined in Section~\ref{sec:DefOfZ}.
We fix a scheme to encode each element of $\Gamma \setminus \{0\}$ as a binary string of some fixed length $\ell$, which contains each field as a fixed-length substring at a fixed position.
The behavior of the Turing machine $M$ will be determined later.

We define the adjacency rules of $X$ in a way that explicitly determines the symbol of a coordinate $(i,j,k) \in \Z^3$ from $(i+h, j-1, k+h')$ for $h, h' \in \{-1,0,1\}$ in a valid configuration $x \in X$, and give some additional rules that forbid certain configurations.
All forbidden patterns will have diameter at most 1 in the $\ell_\infty$ norm.
For this, the \emph{north neighbor coordinates} of a coordinate $(i,j,k) \in \Z^3$ are $(i,j+1,k+h)$ for $h \in \{-1,0,1\}$.
South neighbor coordinates are defined similarly.
The \emph{east neighbor coordinate} of $(i,j,k)$ is $(i+1,j,k)$ and its \emph{west neighbor coordinate} is $(i-1,j,k)$.
Note the asymmetry between the x- and y-axes, which is due to the fact that the alphabet of $\tau$ has slopes and jagged edges only along the y-axis.

Suppose we have a cell $g \in \Gamma \setminus \{0\}$ at position $\vec v = (i,j,k)$ of $x$.
Then there must be exactly one cell $g'$ among its north neighbor coordinates, and its z-coordinate is determined by the slope of the simulated symbols $S(g)$ and $S(g')$: $k-1$ if $S(g')$ has downward slope, $k+1$ if $S(g)$ has upward slope, and $k$ otherwise.
There must also be exactly one cell among the south neighbor coordinates of $g$.
The west neighbor coordinate of $g$ should contain a cell if and only if $S(g)$ has no jagged west edge, and similarly for the east neighbor coordinate.
We call these cells, if they exist, the \emph{north, south, west and east neighbors} of $g$.
There must be no other cells in $\vec v + [-1,1]^3$, that is, within $\ell_\infty$-distance 1 from $\vec v$.



In the remaining part of the construction, we will think of the y-axis as ``time'', and the north neighbor of a cell represents that cell one step later in time.
Thus, for example, ``cell $g$ copies field $F$ from its east neighbor'' means that field $F$ of the north neighbor of $g$ must equal field $F$ of the east neighbor of $g$.
We introduce only two kinds of rules: \emph{deterministic rules} enforce new values of some fields of $g$ based on the current values of $g$ and its west and east neighbors, and \emph{constraints} forbid some combinations of field values of $g$ and possibly its east neighbor.

\subsection{Level-1 structure}

We first handle the address and current and past preimage fields.
At every time step, each cell $g$ changes its address from $(p,q)$ to $(p,q')$ where $q' \equiv q+1 \bmod e^N$.
If $q < e^N-1$, then the past and current preimage fields of $g$ do not change.
If $q = e^N-1$, then we require that they change to the current and future preimage fields of $g$, respectively.
If $g$ has an east neighbor, its address is constrained to be $(p',q)$ where $p' \equiv p+1 \bmod 5^N$.
If $p < 5^N-1$, then this neighbor is also constrained to have the same current preimage field as $g$.
If $p = 5^N-1$, $q = e^N-1$ and $g$ has an east neighbor $g'$, then we require that the $2 \times 3$ pattern formed by preimage fields of $g$ and $g'$ in the order
\[
  \begin{array}{rr}
    \text{future}(g) & \text{future}(g') \\
    \text{current}(g) & \text{current}(g') \\
    \text{past}(g) & \text{past}(g')
  \end{array}
\]
occurs in the subshift $X_\tau \subset \Sigma^{\Z^2}$.
If $g'$ does not exist, then we require that the preimage fields of $g$ form a $1 \times 3$ pattern with the same property.
We call these the \emph{local preimage constraints}.

We inductively define an \emph{$X$-macrocell of level $n$} for $n \geq 0$, their \emph{neighbor relation}, \emph{base coordinates} and \emph{contents} as follows.
An $X$-macrocell of level $0$ is simply a cell, its base coordinate is its coordinate and its content is the value of $\Gamma$ it holds.
For $n \geq 1$, an $X$-macrocell of level $n$ is a collection of level-$(n-1)$ macrocells $K_{(p,q)}$ for $p \in [0, 5^N-1]$, $q \in [0, e^N-1]$ such that each $K_{(p,q)}$ has base coordinate $(i+p 5^{n N},j+q e^{n N},k+h_{p,q})$ for some $(i,j,k) \in \Z^3$ and $h_{p,q} \in \Z$, and its content has address field $(p,q)$ and common current preimage field $s \in \Sigma$.
We also require that if the tiles of $\tau^N(s)$ at adjacent coordinates $\vec v$ and $\vec v'$ do not share a jagged edge, then $K_{\vec v}$ and $K_{\vec v'}$ are neighbors.
If $K$ and $K'$ are two $X$-macrocells of level $n$ and some level-$(n-1)$ macrocell of $K'$ is the east neighbor of one of $K$, then we say $K'$ is an east neighbor of $K$.
Neighbor relations in the other directions are defined analogously.

Recall that we encode cells of $\Gamma$ with binary strings of length $\ell$.
Given an $X$-macrocell $K$ of level $n$, consider the contents of its constituent level-$(n-1)$ macrocells $K_{(0,0)}, K_{(0,1)}, \ldots, K_{(0,\ell-1)}$, which are elements of $\Gamma$.
The binary string formed by the simulation fields of these symbols, if it encodes a symbol of $\Gamma$, is the content of $K$, denoted $c(K)$.
Otherwise the content is undefined (this situation will become impossible later).

\begin{lemma}
  \label{lem:Level1Macrocells}
  Every cell of a configuration $x \in X$ is part of a unique $X$-macrocell of level 1.
\end{lemma}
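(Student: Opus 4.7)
Let $g$ be a cell at position $(i_0, j_0, k_0)$ with address $(p_0, q_0)$ and current preimage $s$. The plan is to build the level-$1$ macrocell containing $g$ in two passes, a north-south trace that recovers a single column of the block and an east-west trace through a suitably smooth row that fans out to the full $5^N \times e^N$ block, and then observe that uniqueness is automatic from the determinism of the local rules.

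For the first pass I use that every cell has exactly one north and one south neighbor, so $g$ lies on a uniquely determined bi-infinite north-south line of cells. The address-advancement rule cycles the second coordinate of their addresses through $[0, e^N-1]$, while the preimage-rotation rule forces the current preimage to remain constant along contiguous runs of $e^N$ cells whose addresses pass through $(p_0, 0), (p_0, 1), \ldots, (p_0, e^N-1)$ and to change exactly at the transition $(p_0, e^N-1) \to (p_0, 0)$. The unique such run through $g$ produces candidate cells $K_{(p_0, q)}$ for $q \in [0, e^N-1]$, all carrying current preimage $s$.

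For the second pass I use a ``smooth row.'' I claim the top row of $\tau^N(s)$ contains no jagged edge between any two adjacent tiles: the top $a$ rows of $\tau(\tile{x})$ are filled with $\tile{x}$, and applying this inductively shows that the top row of $\tau^N(s)$ consists of $\tile{x}$-type tiles, with at most one outer jagged edge on the westmost or eastmost position inherited from $s$. Starting from $K_{(p_0, 0)}$ the east/west neighbor rule applies at every step, and the same-preimage constraint (active for $p < 5^N-1$) forces the entire top row $K_{(p, 0)}$ for $p \in [0, 5^N-1]$ to exist and all carry current preimage $s$. Applying the first pass to each of these columns now yields $K_{(p, q)}$ for every $(p, q) \in [0, 5^N-1] \times [0, e^N-1]$. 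The required base-coordinate formula follows since east steps advance the $x$-coordinate by $1$, north steps advance the $y$-coordinate by $1$, and the $z$-offsets $h_{p,q}$ record cumulative tile slopes; the internal neighbor relations required by the macrocell definition match the east/west and north/south neighbor rules of $X$, which activate precisely at the non-jagged edges of $\tau^N(s)$.

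Uniqueness is then immediate from determinism: any level-$1$ macrocell containing $g$ must place $g$ at address $(p_0, q_0)$ with current preimage $s$, after which all other cells are forced by the uniqueness of both traces. The main obstacle I expect is the smooth-row step in the second pass, namely the inductive verification that $\tau^N(s)$ genuinely has an unobstructed east-west row along which the east-neighbor rule never fails, but this reduces to a short calculation about how $\tau$ preserves the $\tile{x}$-only structure of the top and bottom $a$ rows of each macrotile under iteration.
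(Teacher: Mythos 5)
Your proposal is correct and takes essentially the same route as the paper's proof: a unique south--north column through $g$ forced by the address and preimage rules, an east--west traversal along a jag-free extreme row of $\tau^N(s)$, then filling in the remaining columns and checking the macrocell conditions. The only differences are cosmetic: the row with addresses $(p,0)$ that you traverse is actually the southmost (not the top) row of the macrocell, which is equally jag-free, and the consistency of the $z$-offsets and east--west neighbor relations that you defer is handled in the paper by the observation that every directed cycle in $\tau^N(s)$ has total slope zero.
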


\begin{proof}
  Let $g$ be a cell in $x$ with current preimage $s$.
  Since every cell has exactly one north and south neighbor, $g$ is part of a unique sequence of neighboring cells $g_0, \ldots, g_{e^N-1}$ with addresses $(p,0), (p,1), \ldots (p,e^N-1)$ for some $p \in [0, 5^N-1]$ and current preimage $s$, running from south to north.
  The z-coordinates of these cells are given by the slopes of the corresponding tiles of $\tau^N(s)$.
  The southmost row of $\tau(s)^N$ is connected, and because the absence of jagged edges enforces the existence of neighbors in cells of $x$, the cell $g_0$ is part of a unique west-east sequence of neighboring cells $g_{(0,0)}, g_{(1,0)}, \ldots, g_{(5^N-1,0)}$ with respective addresses and identical z-coordinates.
  Like $g$, each of these cells is part of a unique south-north sequence of cells, resulting in a connected set $K$ of cells $g_{(p,q)}$ for $p \in [0,5^N-1]$ and $[0,e^N-1]$ with current preimage $s$.
  
  Since every directed cycle in $\tau^N(s)$ has slope $0$, the total slope between two tiles does not depend on the choice of the path.
  Hence the relative z-coordinates of cells in $K$ correspond to the relative slopes between tiles in $\tau^N(s)$ and the non-jagged adjacency relation of $\tau^N$ corresponds to the neighbor relation between the respective cells of $K$.
  Thus $K$ is an $X$-macrocell of level 1.
\end{proof}

\begin{lemma}
  \label{lem:NeighborMacrocells}
  Let $K$ and $K'$ be $X$-macrocells of level 1.
  If $K'$ is the east neighbor of $K$, then for each $q \in [0, e^N-1]$, the cell $K'_{(0,q)}$ is the east neighbor of $K_{(5^N-1,q)}$, and there are no other neighbor relations between cells of $K$ and $K'$.
  In particular, $K$ has at most one east neighbor.
  Analogous claims hold for north, west and south neighbors.
\end{lemma}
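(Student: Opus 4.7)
The plan is to leverage Lemma~\ref{lem:Level1Macrocells} together with the deterministic adjacency, address, and preimage constraints of the SFT.

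First I will pin down the witness pair. If $K'$ is the east neighbor of $K$, some cell $g \in K$ has an east neighbor $g' \in K'$. Write $g = K_{(p_0, q_0)}$. When $p_0 < 5^N - 1$, the address and preimage deterministic rules force $g'$ to have address $(p_0 + 1, q_0)$ and the same current preimage as $g$; Lemma~\ref{lem:Level1Macrocells} then identifies $g' = K_{(p_0 + 1, q_0)} \in K$, contradicting $K \neq K'$. Hence $p_0 = 5^N - 1$, and by the east-address-increment rule the east neighbor has address $(0, q_0)$, so $g' = K'_{(0, q_0)}$.

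Second I will extend this single pair to the full column by induction on $q$. The absence of a jagged east edge at $\tau^N(s)_{(5^N - 1, q_0)}$ propagates through the substitution to make every tile in the east column of $\tau^N(s)$ non-jagged on the east (since the $\tau$ rules attach jagged edges uniformly across an east column), so every $K_{(5^N - 1, q)}$ admits a unique east neighbor $L_q$ of address $(0, q)$. For the inductive step I need the north neighbor of $L_q$ inside its macrocell to coincide with $L_{q+1}$, which amounts to matching the slope profile of $\tau^N(s)$ in column $5^N - 1$ at rows $q, q+1$ with the slope profile of $\tau^N(s')$ in column $0$ at the same rows, where $s'$ is the current preimage of $K'$. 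This equality holds because the local preimage constraint invoked at $K_{(5^N-1, e^N-1)}$ places $(s, s')$ as horizontally adjacent in some $X_\tau$ pattern; hence $\tau^N(s)$ and $\tau^N(s')$ sit as adjacent sub-images of a single $\tau$-image, whose slope consistency forces the two column profiles to agree. Running the induction in both directions from $q_0$ yields $L_q = K'_{(0, q)}$ for every $q \in [0, e^N - 1]$.

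Third I verify that no other cell-level neighbor relations exist. Cells $K_{(p, q)}$ with $p < 5^N - 1$ have east neighbors (when they exist) inside $K$ by the argument of Step~1, and symmetrically cells $K'_{(p, q)}$ with $p > 0$ have west neighbors inside $K'$. North and south neighbor relations preserve the $x$-coordinate, and the cells of $K$ and $K'$ occupy disjoint ranges of $x$-values, so those relations cannot span between $K$ and $K'$. Uniqueness of the east neighbor of $K$ is immediate, since any east-neighbor macrocell must meet $K$ through some $K_{(5^N - 1, q)}$, whose east neighbor is a single cell lying in a single macrocell by Lemma~\ref{lem:Level1Macrocells}. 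The north, south, and west statements then follow by symmetry.

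The main technical obstacle is the slope-matching claim in Step~2: verifying that a non-jagged horizontally adjacent pair $(s, s')$ in $X_\tau$ produces matching slope profiles along the shared column after iterating $\tau^N$. This reduces to the slope consistency of $\tau$-images being preserved under the substitution, and once it is established, the rest of the argument is a direct application of the deterministic SFT rules and Lemma~\ref{lem:Level1Macrocells}.
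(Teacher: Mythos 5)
Your overall skeleton (locate the witness pair on the east border via the address rules, propagate along the column by induction, then rule out other adjacencies) matches the paper's, but the justification of your inductive step has a genuine circularity. You derive the slope-profile matching of column $5^N-1$ of $\tau^N(s)$ and column $0$ of $\tau^N(s')$ from the local preimage constraint at $K_{(5^N-1,e^N-1)}$, claiming it places $(s,s')$ as a horizontally adjacent pair in $X_\tau$. But that constraint only relates $s$ to the current preimage field of whatever cell happens to be the east neighbor of $K_{(5^N-1,e^N-1)}$; identifying that field with $s'$ presupposes that this east neighbor belongs to $K'$, which is exactly the column-alignment statement you are in the middle of proving. (A secondary issue: even granting $X_\tau$-adjacency of $(s,s')$, equal slope profiles require the additional observation that $s$ has no jagged east edge and $s'$ no jagged west edge, since horizontally adjacent tiles in $X_\tau$ that share a jagged edge can have different slopes; this is recoverable from the existence of the east/west neighbors, but you do not say it.)

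The paper's proof of Lemma~\ref{lem:NeighborMacrocells} avoids slope analysis entirely and closes the induction with the uniqueness rule of the SFT: once $K'_{(0,q-1)}$ is known to be the east neighbor of $K_{(5^N-1,q-1)}$, the east neighbor $L_q$ of $K_{(5^N-1,q)}$ sits at coordinate $(x+1,y+1,z+h)$ with $h\in\{-1,0,1\}$, which is one of the three north neighbor coordinates of $K'_{(0,q-1)}$; since every cell has exactly one cell among its north neighbor coordinates, $L_q$ must equal that unique north neighbor, namely $K'_{(0,q)}$. Height agreement between the two columns is then a consequence of validity, not an input. If you replace your slope-matching step by this observation (which is available to you via Lemma~\ref{lem:Level1Macrocells} and the adjacency rules you already cite), the rest of your argument, including the disjoint $x$-ranges and the uniqueness of the east neighbor, goes through.
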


\begin{proof}
  The only cells of $K$ that may have an east neighbor not in $K$ are those on its east border with addresses $(5^N-1,q)$, as the others either have east neighbors in $K$ or correspond to tiles with jagged east edges.
  The constraints on the addresses of neighboring cells guarantee that their east neighbors have addresses $(0,q)$.
  Furthermore, if one of these cells has an east neighbor, then the common current preimage of $K$ has no jagged east edge, so all of them must have an east neighbor.
  These neighbors must come from the same $X$-macrocell, since for each $q \in [1,e^N-1]$ the east neighbor of $K_{(5^N-1,q)}$ must be the unique north neighbor of $K'_{(0,q-1)}$, which is $K'_{(0,q)}$.
  The other directions are handled analogously.
\end{proof}

\subsection{Computation and simulation}

We now impose additional rules and constraints on $X$ to enforce the higher-level substitutive structure.
The idea is to use the remaining fields to simulate a computation that enforces the structural rules of $X$ on macrocells and lifts itself to the next-level macrocells, resulting in an infinite hierarchy of simulations.
For this, recall from Section~\ref{sec:ThreeSparse} that $e = 2a + 2b + 2c + d$, and that in each image $\tau(s)$, the region $[0,4] \times [0, a-1]$ is free from jagged edges.
The \emph{computation set}, denoted $J$, is the subset of $[0, e^N-1]$ of those numbers whose base-$e$ representation consists of digits in $[0, a-1]$.
We define the \emph{computation grid} as $[0, 5^N-1] \times J$.
Suppose we have a cell $g$ with address $(p,q)$.
If $q+1 \in J$, then each cell on row $q+1$ of the $X$-macrocell $C$ containing $g$ (except the bordermost ones) has a west and east neighbor, so the cells with addresses $(p+h, q+1)$ for $h \in \{-1,0,1\}$ can receive information from $g$ by nearest-neighbor rules.
This means we have a subgrid of size $[0, 5^N-1] \times [0, a^N-1]$ inside each $X$-macrocell in which we can transfer and process information.
We denote by $\pi_J : [0, a^N-1] \to J$ the increasing bijection.

We describe the mailbox fields.
At time step $0 = \pi_J(0)$, each cell copies its simulation field into its west and east mailbox fields.
At each time step $q \in \pi_J([1, 5^N])$, each cell copies its west mailbox field from its west neighbor, and its east mailbox field from its east neighbor.
If these neighbors do not exist, the value of the copied field is $\#$.
At other time steps, each cell keeps its mailbox fields intact, and all other fields are kept intact at all time steps $q \leq 5^N$.
The effect of this definition is that the cells with addresses $\{p\} \times \pi_J(5^N+1)$ contain in their mailbox fields the simulation fields of the cells with the same addresses in the west and east neighbors of their $X$-macrocell of level 1, or $\#$ if those macrocells do not exist.

We describe the computation field.
At time step $\pi_J(5^N+1)$, the computation fields are initialized so that the cell with address $(0, \pi_J(5^N+1))$ contains the head in the initial state, other cells at that time do not contain the head, and the tape symbol of each of these cells is blank.
At each time step $\pi_J(q)$ for $5^N+1 < q < a^N-1$, each cell simulates one step of $M$ in the standard way.
At every time step not mentioned, the computation field retains its value.

We now describe the machine $M$, whose purpose is to simulate the local rules of $X$ inside each $X$-macrocell $K$.
It is allowed to read the program and mailbox fields of each cell of $C$ (but not modify them), and it may freely modify the contents of the simulation, tape, and future preimage fields of the cells of $K$.
The program fields of cells never change, and the simulation and future preimage fields never change unless modified by $M$.
The machine behaves like this:
\begin{enumerate}
\item
  Copy the longest prefixes over $\{0, 1\}$ of the program fields, west mailbox fields, simulation fields, and east mailbox fields of each cell into the tape in this order, so that they form four bitstrings $P, b_w, b_s, b_e \in \{0, 1\}^*$.
\item
  Simulating an efficient universal Turing machine $M_U$, execute the program $P$ on $(b_w, b_s, b_e)$, and write the resulting bitstring $b_r \in \{0, 1\}^*$ to the tape.
\item
  Verify that the length of $b_r$ is $\ell$ and it encodes a nonzero symbol $g \in \Gamma$. If not, then halt.
\item
  Write $b_r$ into the simulation fields of the $\ell$ leftmost cells and $\#$ to the simulation fields of the remaining cells.
\item
  Write the simulated symbol $S(g)$ in the future preimage fields of all cells.
\item
  If $g$ has address $(p,q)$, verify that the program bit of $g$ equals the $p$'th bit of the program $P$, or equals $\#$ if the program is shorter than this. If not, then halt.
\end{enumerate}
Each phase except the second one takes $O(5^{2 N})$ computation steps since the bitstring encoding contains each field in a consistent position.
If the machine $M$ has enough time and space to perform all phases, then in each $X$-macrocell it simulates a partial function encoded in the program fields of the cells.
We add to $X$ the constraint that the machines cannot halt or run out of time or space, so that the computations always succeed in all valid configurations.
We formalize these definitions into a lemma.

\begin{lemma}
  \label{lem:ProgramEffect}
  Let $K$ be an $X$-macrocell of level 1, and let $P, b_s \in \{0,1\}^*$ be the longest prefixes over $\{0,1\}$ of the program and simulation fields of $K_{(0,0)}, \ldots, K_{(5^N-1,0)}$.
  Define $b_w, b_e \in \{0,1\}^*$ similarly for the west and east neighbors of $K$ if they exist, or as empty strings if they do not.
  Then the content of the north neighbor $K'$ of $K$ is $c(K') = M_U(P, b_w, b_s, b_e) \in \{0,1\}^\ell$.
  The simulated tile $S(c(K')) \in \Sigma$ is the common current preimage field of the cells of $K'$ and the future preimage field of cells on the northmost row of $K$.
  The past preimage field of each cell of $K'$ equals the common current preimage field of cells of $K$.
\end{lemma}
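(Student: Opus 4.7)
The plan is to verify the three assertions of Lemma~\ref{lem:ProgramEffect} by tracing the construction of $X$ in three stages, corresponding to the mailbox fields, the Turing machine $M$, and the preimage fields.

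For the mailbox stage I would argue by induction on $k \in [0, 5^N]$ that immediately after the copy step at time $\pi_J(k)$, the west mailbox field of the cell with x-address $p$ in the computation row of $K$ contains the simulation field of the cell at x-address $p-k$, where this is interpreted in $K$ itself if $p-k \geq 0$, in the west neighbor of $K$ at address $p - k + 5^N$ if that cell exists, and as $\#$ otherwise. The base case $k=0$ is the initialization at time $\pi_J(0)$ that copies the simulation field into each mailbox. The inductive step combines the preservation of the mailbox between times $\pi_J(k-1)$ and $\pi_J(k) - 1$ with the single copy from the west neighbor at $\pi_J(k)$. At $k = 5^N$ this identifies the longest $\{0,1\}$-prefix of the west mailboxes along the computation row with $b_w$; the argument for $b_e$ is symmetric. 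Since no mailbox update occurs at or after $\pi_J(5^N + 1)$, these prefixes are precisely what $M$ reads.

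For the machine stage I would invoke the constraint that $M$ never halts or exceeds its time or space budget. At time $\pi_J(5^N+1)$ the computation field is initialized with the head at the western end in the initial state on a blank tape, after which phases~1--6 execute in order. Phase~1 assembles the tape contents $(P, b_w, b_s, b_e)$ by reading the program, west mailbox, simulation, and east mailbox fields of the computation row; the mailbox values are $b_w, b_e$ by the first stage, and the program and simulation values are unchanged from $q = 0$ by the preservation rule for non-mailbox fields at $q \leq 5^N$. Phase~2 simulates $M_U$ and writes $b_r = M_U(P, b_w, b_s, b_e)$. Phase~3 verifies that $b_r \in \{0,1\}^\ell$ encodes a nonzero symbol $g \in \Gamma$, which the non-halting constraint ensures. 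Phase~4 writes $b_r$ into the simulation fields that constitute $c(K')$, and phase~5 writes $S(g) = S(c(K'))$ into every future preimage field of $K$. Together these identify $c(K') = M_U(P, b_w, b_s, b_e)$ as claimed.

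For the preimage stage I would apply the local preimage constraint at $q = e^N - 1 \to 0$, which states that the past and current preimage fields of the north neighbor of a cell $g$ equal the current and future preimage fields of $g$ itself. Every cell of the northmost row of $K$ has current preimage equal to the common value $s \in \Sigma$ of $K$ and future preimage $S(c(K'))$ by phase~5. Hence every cell of the southmost row of $K'$ has past preimage $s$ and current preimage $S(c(K'))$. The preservation rule for past and current preimages at transitions $q < e^N - 1$ then propagates these two common values across all of $K'$, matching the remaining two assertions. The main obstacle I anticipate is the bookkeeping in the mailbox stage, especially the boundary cases: when the west or east neighbor of $K$ is absent, a $\#$ enters the mailbox as soon as the shift crosses the missing side, and this has to be tracked carefully to recover the precise definition of $b_w$ and $b_e$ as the longest $\{0,1\}$-prefixes. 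Once this stage is settled, the remaining two stages are direct readings of the construction rules together with the non-halting constraint.
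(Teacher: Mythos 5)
Your proposal is correct and takes essentially the same route as the paper: the paper's proof of Lemma~\ref{lem:ProgramEffect} simply cites Lemma~\ref{lem:NeighborMacrocells} for the existence and correct alignment of the north, west and east neighbor macrocells and then states that the claim follows from the construction of $X$, which is exactly the verification you carry out in detail (mailbox transport, execution of $M$ under the non-halting constraint, and preimage propagation). The only addition worth making is the explicit appeal to Lemma~\ref{lem:NeighborMacrocells}, since the existence of $K'$ and the column-by-column alignment of $K$ with its west and east neighbors is what your mailbox induction silently relies on at the macrocell borders.
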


\begin{proof}
  Lemma~\ref{lem:NeighborMacrocells} ensures that the north neighbor $K'$ exists, and that the possible west and east neighbors of $K$ exist if and only if $c(K)$ simulates a tile without the respective jagged edges and are correctly aligned with $K$.
  The claim then follows from the construction of $X$.
\end{proof}

\subsection{Bootstrapping the simulation structure}

It remains to ``bootstrap'' the simulation by fixing the program $P$ in each macrocell.
Let $P_X$ be a program that implements all the rules and constraints we have defined for $X$ so far, in the sense that $M_U(P_X)$ is a partial function from $(\Gamma \cup \{\#^\ell\}) \times \Gamma \times (\Gamma \cup \{\#^\ell\})$ to $\Gamma$ that, given the states of a cell and possibly its west and east neighbors, assigns a north neighbor to it or diverges if one cannot exist.
We add one more constraint to $X$ that is not encoded into $P_X$: the program field of each cell with address $(p, 0)$ must be equal to the $p$'th bit of $P_X$ if $p < n$, and to $\#$ if $p \geq n$.
This constraint bootstraps the simulation, ensuring that all $X$-macrocells of level 1 contain $P_X$ in the program fields of their cells and thus simulate the rules of $X$ and form $X$-macrocells of level 2.
The last phase of $M$ ensures that they too use the program $P_X$ and form level-3 macrocells, and so on.

\begin{lemma}
  \label{lem:EveryLevelMacrocells}
  For each $n \geq 0$, every cell in a configuration of $X$ belongs to a unique $X$-macrocell $K$ of level $n$.
\end{lemma}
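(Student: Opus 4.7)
My plan is a straightforward induction on $n$. The case $n = 0$ is immediate since every cell is its own level-$0$ macrocell, and the case $n = 1$ is exactly Lemma~\ref{lem:Level1Macrocells}.

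For the inductive step, I would assume every cell belongs to a unique level-$n$ macrocell for some $n \geq 1$, and then treat the level-$n$ macrocells as the cells of a ``virtual'' configuration $\tilde x$ over $\Gamma$, each macrocell $K$ contributing the symbol $c(K)$, and apply Lemma~\ref{lem:Level1Macrocells} one scale up. For this I need to verify two things: first, that $c(K)$ is a well-defined element of $\Gamma$ for every level-$n$ macrocell $K$; and second, that $\tilde x$ satisfies all the local rules and constraints of $X$ viewed at the coarser scale (address increments, west--east matching, local preimage constraints, the bootstrap constraint on program fields, and the rule determining a cell's north neighbor from its surroundings). Given these, Lemma~\ref{lem:Level1Macrocells} applied to $\tilde x$ produces for every virtual cell a unique ``level-$1$ macrocell of $\tilde x$'', which unpacks to a unique level-$(n+1)$ $X$-macrocell containing the original cell.

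Both properties should follow from the recursive design. At level $1$ the bootstrap constraint forces the program fields on the south row to spell out $P_X$, and Lemma~\ref{lem:ProgramEffect} then shows that the content of the north neighbor is $M_U(P_X, b_w, c(K), b_e)$, which by construction of $P_X$ is a well-formed element of $\Gamma$ whose fields obey all the $X$-rules. The delicate point, and what I expect to be the main obstacle, is verifying that the bootstrap constraint also holds one level higher: namely, that $c(K)$'s program field equals $P_X[p]$ whenever $K$ sits at x-address $p$ on the south row of its level-$(n+1)$ macrocell. This is exactly what step~$6$ of the Turing machine enforces, since it halts unless the program bit of the output symbol matches $P[p]$ at the synthesized address; because the program $P$ running at level $n$ is itself $P_X$ by induction, the synthesized outputs satisfy the level-$(n+1)$ bootstrap, and the induction closes. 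Uniqueness at level $n+1$ is inherited from the uniqueness in Lemma~\ref{lem:Level1Macrocells} applied to $\tilde x$, combined with the inductive uniqueness at level $n$.
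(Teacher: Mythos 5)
Your proposal is correct and follows essentially the same route as the paper: an induction in which Lemma~\ref{lem:ProgramEffect} (with $P = P_X$, the bootstrap being propagated by the last phase of $M$) shows that level-$n$ macrocells behave exactly like cells in their contents and neighbor relations, so that the argument of Lemma~\ref{lem:Level1Macrocells} can be rerun one scale up to obtain the unique next-level macrocell. The only cosmetic difference is that the paper invokes the \emph{proof} of Lemma~\ref{lem:Level1Macrocells} at the macrocell level rather than packaging the macrocells into a virtual configuration and applying the lemma as a black box, which is the same idea stated slightly more informally.
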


\begin{proof}
  The case $n = 0$ is trivial and Lemma~\ref{lem:Level1Macrocells} implies case $n = 1$.
  Suppose then that $n \geq 2$ and let $g$ be a cell in a configuration $x \in X$.
  Then $g$ belongs to a unique $X$-macrocell $K'$ of level $n-1$.
  Lemma~\ref{lem:ProgramEffect} applied inductively to $n-1$ and $P = P_X$ implies that $X$-macrocells of level $n-1$ behave exactly as cells in terms of their possible contents and neighbor relations.
  Then the proof of Lemma~\ref{lem:Level1Macrocells} can be applied to level-$(n-1)$ macrocells to show that $K'$, and thus $g$, belongs to a unique $X$-macrocell of level $n$.
\end{proof}

\begin{lemma}
  \label{lem:MacrocellsCover}
  Let $x \in X$ and let $C \subset \Z^3$ be finite such that for each $\vec v \in C$, $x_{\vec v}$ is a cell, and these cells are connected in $x$ by the neighbor relation.
  Then the two-dimensional pattern $F = \{(x,y) \mapsto S(x_{\vec v}) \;|\; (x,y,z) = \vec v \in C \}$ over $\Sigma$ occurs in $X_\tau$.
\end{lemma}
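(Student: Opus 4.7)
The plan is to exploit the hierarchical macrocell structure provided by Lemma~\ref{lem:EveryLevelMacrocells} and reduce $F$ to a sub-pattern of an iterated substitution image of a small $X_\tau$-pattern.

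First, I would bound the horizontal extent of $C$: let $D$ be an integer larger than the side length of $\pi_H(C)$, and pick $n \geq 1$ with $5^{nN}, e^{nN} > D$. By Lemma~\ref{lem:EveryLevelMacrocells}, each $\vec v \in C$ lies in a unique level-$n$ $X$-macrocell $K(\vec v)$. Since distinct level-$n$ macrocells have disjoint horizontal projections, which are $5^{nN} \times e^{nN}$ rectangles aligned on a grid, the set $\{K(\vec v) : \vec v \in C\}$ is contained in an at-most $2 \times 2$ block of horizontally adjacent level-$n$ macrocells. Along the way one observes that the map $(x,y) \mapsto S(x_{\vec v})$ defining $F$ is unambiguous: the cells inside a single macrocell occupy pairwise distinct horizontal positions, and horizontal projections of different macrocells are disjoint.

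Next, I would argue inductively on $n$ that adjacent level-$n$ macrocells obey the level-$n$ analogue of the local preimage constraints, so that the triples (past, current, future) attached to the level-$n$ macrocells in this $2 \times 2$ block form a valid $X_\tau$-pattern of the appropriate shape. This uses Lemma~\ref{lem:ProgramEffect} together with the bootstrapping fact that every macrocell actually runs the program $P_X$, which encodes the level-$0$ local preimage constraints; at each step up the hierarchy, the conclusion of Lemma~\ref{lem:ProgramEffect} ensures that the current preimage of a macrocell at the next level equals the simulated symbol determined by $P_X$ from the contents of its constituents, and that the past/future preimages propagate correctly between vertically adjacent macrocells. Thus the current preimages of the level-$n$ macrocells in our $2 \times 2$ block constitute a pattern $P$ occurring in $X_\tau$.

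Finally, within a single level-$n$ macrocell with common current preimage $s \in \Sigma$, another induction on $n$ (again via Lemma~\ref{lem:ProgramEffect} and the addressing conventions) shows that the simulated tiles $S(x_{\vec v})$ over its constituent cells form precisely the pattern $\tau^{nN}(s)$. Combined with the previous step, the global pattern of simulated tiles over the chosen $2 \times 2$ block is $\tau^{nN}(P)$ for some $P$ occurring in $X_\tau$, and $F$ is a sub-pattern of this. Since $X_\tau$ contains every sub-pattern of every $\tau^m$-image of a symbol of $\Sigma$, we conclude $F \in X_\tau$. The main obstacle is making the two inductions precise: specifically, verifying that level-$n$ macrocells, viewed as abstract cells with content in $\Gamma$, really do satisfy the same adjacency, neighbor, preimage, and addressing rules as level-$0$ cells, so that the level-$1$ analysis transplants to every higher level; once this is set up, the rest is essentially bookkeeping.
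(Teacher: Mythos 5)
Your proposal follows essentially the same route as the paper's proof: pass to a level $n$ large enough that the connected set of cells falls into at most a $2\times2$ block of level-$n$ macrocells, use Lemma~\ref{lem:ProgramEffect} (with the bootstrapped program $P_X$, so that macrocells behave like cells) together with the lifted local preimage constraints to see that the current preimages of these macrocells form a valid $X_\tau$-pattern, and recover $F$ as a subpattern of its $\tau^{nN}$-image. The paper merely organizes this as a short case analysis on whether the base coordinates take one or two values along each horizontal axis, invoking the $1\times3$ resp.\ $2\times3$ preimage checks at a border macrocell; otherwise the two arguments coincide.
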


\begin{proof}
  Let $D \subset \Z^3$ be a finite set of cells in $x$ that contains $C$ and is connected by the neighbor relation.
  By Lemma~\ref{lem:EveryLevelMacrocells}, for each $n \geq 0$ each cell in $D$ belongs to a unique $X$-macrocell of level $n$ in $x$.
  Let $\mathcal{K}_n$ be the set of these macrocells, and let $B_n$ be the set of their base coordinates.
  Since $D$ is connected by the neighbor relation of cells, $\mathcal{K}_n$ is connected by the neighbor relation of level-$n$ macrocells.
  
  Let $\pi_1, \pi_2 : \Z^3 \to \Z$ be the projection maps to x- and y-coordinates.
  Then $\min \pi_1(B_n) > \min \pi_1(D) - 5^{n N}$ and $\max \pi_1(B_n) \leq \max \pi_1(D)$, and since $\pi_1(B_n)$ lies in a single congruence class modulo $5^{n N}$, for large enough $n$ the set $\pi_1(B_n)$ has at most two elements.
  The same holds for $\pi_2(B_n)$.
  If $|\pi_1(B_n)| = 1$, then $C$ is contained in either a single macrocell or a pair of south-north neighbors, and Lemma~\ref{lem:ProgramEffect} and the local preimage constraints of $X$ ensure that their simulated tiles form a $1 \times 2$ pattern occurs in $X_\tau$.
  If $|\pi_1(B_n)| = 2$, then $\mathcal{K}_n$ consists of at most 4 macrocells, some two of which are west-east neighbors, say $K_w$ and $K_e$.
  Because of the local preimage constraints, the level-$(n-1)$ macrocell $K_{(5^N-1,0)}$ of the north neighbor $K$ of $K_w$ guarantees that the simulated tiles of $\mathcal{K}_n$ form a pattern of $X_\tau$, as it is part of a valid $2 \times 3$ pattern.
  In either case, $F$ occurs in the $\tau^n$-image of the pattern and hence in $X_\tau$.
\end{proof}

It remains to show that for some values of $e$ and $N$ and some choice of the program $P_X$, the machine $M$ has enough time and space to simulate the universal machine $M_U$ on all valid inputs, so that $X$ is nontrivial.
We first observe that the length $\ell$ of the encoding is logarithmic in $e^N$, and each field except the address has constant length.
All deterministic rules and constraints of $X$ apart from the one that enforces $P_X$ can be expressed in $P_X$ using a constant number of bits, expressing $N$ and $e$ as variables that are initialized at the beginning of the program.
Then the running time of $M_U$ on $P_X$ is polylogarithmic in $e^N$.
The total length of $P_X$ is then $O(\log N + \log e)$.
For large enough $e$ and $N$, the machine $M_U$ is guaranteed to have enough time and space to complete its computation.

\begin{proposition}
  Let $\phi : \Gamma \to \Delta$ be the symbol map defined by $\phi(g) = \psi(S(g))$ for $g \in \Gamma \setminus \{0\}$ and $\phi(0)$ being the empty cube.
  Then $\phi(X) = Z$.
\end{proposition}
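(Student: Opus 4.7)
The plan is to prove the two inclusions $\phi(X) \supseteq Z$ and $\phi(X) \subseteq Z$ separately, exploiting the zero-gluing characterization of $Z$.

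For $\phi(X) \supseteq Z$, I would first observe that $\phi(X)$ is $1$-zero-gluing by a direct analog of Lemma~\ref{lem:00SoficGluing}: since the SFT $X$ is defined by forbidden patterns of $\ell_\infty$-diameter at most $1$ and $\phi$ is cellwise (hence $0$-to-$0$), two $X$-preimages of configurations in $\phi(X)$ whose supports lie at $\ell_\infty$-distance $\geq 2$ can be combined via zeros without violating any local rule. Since $\phi(X)$ is closed and shift-invariant, this reduces the inclusion to showing $x \in \phi(X)$. I would construct a preimage $y \in X$ of $x$ by filling in, at each cell position, the fields forced by the self-simulating structure: the past, current and future preimage fields are read from the $X_\tau$-pattern that $x$ represents at the appropriate substitution levels; the address field is the cell's position within its level-$1$ macrocell; the program field encodes $P_X$ wherever the bootstrapping rule requires it; and the simulation, computation and mailbox fields are determined by the self-simulation at every level. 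The consistency of this construction with the local rules of $X$ is guaranteed inductively by Lemmas~\ref{lem:Level1Macrocells}--\ref{lem:EveryLevelMacrocells} together with Lemma~\ref{lem:ProgramEffect}. Hence $x \in \phi(X)$, so $\OC{x} \subseteq \phi(X)$, and by the $1$-zero-gluing property we obtain $Z \subseteq \phi(X)$.

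For $\phi(X) \subseteq Z$, I would take any $y \in X$ and any $1$-connected component $C$ of $\phi(y)$. The local SFT rule forbidding non-designated cells within $\ell_\infty$-distance $1$ identifies $C$ with the support of a neighbor-connected set of cells of $y$, and Lemma~\ref{lem:MacrocellsCover} applied to growing finite subsets together with a compactness argument yields an $X_\tau$-pattern $P$ such that $\phi(y)|_C$ agrees with $\psi \circ \Surf(P)$ up to a vertical shift. Since $P$ appears as a subpattern of $\tau^n(\tile{ })$ for some $n$, I would produce a $y' \in \OC{x}$ having $\phi(y)|_C$ as a $1$-connected component by choosing a sequence of shifts that places $P$ deeper and deeper inside level-$k$ macrotiles of $\tau^{n+k}(\tile{ })$ and passing to the pointwise limit as $k \to \infty$. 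The key quantitative input, visible in the proof of Lemma~\ref{lem:SufficientParams}, is that the bridges of a level-$k$ macrotile have heights on the order of $(2a)^k$, so the complement of $C$'s support escapes every bounded vertical window in the limit, leaving $y'$ equal to $\phi(y)|_C$ on $C$'s support and equal to zero on the rest of the $1$-neighborhood of that support. The characterization of $Z$ as the $1$-zero-gluing closure of $\OC{x}$ then gives $\phi(y) \in Z$.

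The main obstacle will be making the isolation limit argument of the second inclusion fully rigorous, especially when the horizontal projection of $C$ is unbounded (e.g.\ an entire half-plane cut off by macrotile jagged edges). To handle this I would employ a diagonal construction, simultaneously enlarging the finite pointwise-approximation window and deepening the macrotile level used for vertical separation, verifying that both the matching on $C$ and the vanishing on the rest of the $1$-neighborhood of $C$ persist in the limit.
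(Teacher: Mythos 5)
Your decomposition is genuinely different from the paper's. The paper also begins with Lemma~\ref{lem:00SoficGluing}, but then reduces the whole statement to ``$\phi(X)$ and $Z$ have the same $1$-connected configurations'' and settles both inclusions at the level of finite patterns plus compactness: for a $1$-connected configuration of $X$ it shows each window of its $\phi$-image occurs in $Z$ (via Lemma~\ref{lem:MacrocellsCover}) and uses closedness of $Z$; for a $1$-connected $z \in Z$ it realizes each window inside a configuration of $X$ containing a level-$n$ macrocell and again passes to a limit. Your treatment of $Z \subseteq \phi(X)$ by minimality of the zero-gluing closure (show $x \in \phi(X)$, deduce $\OC{x} \subseteq \phi(X)$, and conclude since $\phi(X)$ is a $1$-zero-gluing subshift) is a valid and rather elegant alternative, but it needs an exact infinite preimage $y \in X$ with $\phi(y) = x$; your ``fill in the fields'' description asserts its existence rather than proving it, and the honest way to get it is the same compactness argument over macrocells of growing level that the paper uses for its direction, so the saving is smaller than it looks.

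The genuine gap is in your argument for $\phi(X) \subseteq Z$. You set out to prove something strictly stronger than needed: that every $1$-connected component of $\phi(y)$ is \emph{exactly} a $1$-connected component configuration of some $y' \in \OC{x}$, obtained as a limit in which the remainder of the surface ``escapes every bounded vertical window''. You cannot choose the translates of $x$ freely: the component's own infinite hierarchical structure dictates where the matching copies sit inside $x$, and across a jagged boundary the two sides of the surface are at vertical offset $0$ or $\pm 1$ at the rows where the bridges meet the flat $a$-blocks, so the leftover cells of $x$ adjacent to the component need not leave any bounded window, however deep inside a macrotile the pattern is placed, and if they stay within distance $1$ they merge into the component of $y'$, defeating the construction. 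Your diagonal argument does not address this, and it is exactly the delicate point. The paper avoids it entirely: $Z$ is closed (an intersection of subshifts), so it suffices that every finite window of the component occurs in \emph{some} configuration of $Z$ --- which follows from Lemma~\ref{lem:MacrocellsCover}, since the window is part of the surface of a finite $X_\tau$-pattern, hence occurs in a translate of $x$ --- and then $\phi(y) \in Z$ by compactness together with the zero-gluing property of $Z$ from Lemma~\ref{lem:GluingIsGluing}. I recommend replacing your isolation-limit step by this pattern-plus-closedness argument; with that change the rest of your plan goes through.
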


\begin{proof}
  By Lemma~\ref{lem:00SoficGluing}, $\phi(X)$ is $1$-zero-gluing, so it suffices to show that it has the same $1$-connected configurations as $Z$.
  Let $x \in X$ be a $1$-connected configuration.
  For each $r \geq 0$, let $C_r$ be the set of cells in $x|_{[-r,r]^3}$.
  Lemma~\ref{lem:MacrocellsCover} implies that if we ignore the z-coordinates of $C_r$, their simulated tiles form a pattern of $X_\tau$, so that their relative z-coordinates correspond to those given by the slopes of $X_\tau$.
  Hence $\phi(x)|_{[-r,r]^3}$ occurs in $Z$, and by compactness $\phi(x) \in Z$.
  
  Let then $z \in Z$ be $1$-connected.
  By definition, for all $r \geq 0$ the relative z-coordinates of all cells in $z|_{[-r,r]^3}$ come from the slopes of a pattern that occurs in $\tau^{n N}(s)$ for some $s \in \Sigma$ and $n \geq 0$.
  Let $K$ be an $X$-macrocell of level $n$ simulating $s$, and $x \in X$ a configuration containing $K$.
  Then the relative z-coordinates of cells in $C$ are given by the slopes of $\tau^{n N}(s)$, so $z|_{[-r,r]^3}$ occurs in $\phi(x)$.
  By continuity $z \in \phi(X)$.
\end{proof}

\subsection{The direction of determinism}

Finally, we apply a discrete shear map to $X$ in order to obtain an SFT that is deterministic in the direction of the z-axis, that is, the sparse subdynamics.
The north-determinism of $X$ comes from the fact that in a configuration $x \in X$, the symbol $x_{\vec 0}$ is determined by $x|_D$ where $D = [-1,1] \times \{-1\} \times [-1,1]$.
Define $f : \Z^3 \to \Z^3$ by $f(i,j,k) = (i,j,k+2j)$ and extend it to $\Gamma^{\Z^3}$ by $f(x)_{\vec v} = x_{f^{-1}(\vec v)}$.
Then the SFT $f(X)$ has the property that $x_{\vec 0}$ is determined by $x|_E$ where $E = f(D) = [-1,1] \times \{-1\} \times [-3,-1]$, so $f(X)$ is deterministic in the direction $(0,0,1)$.
It also has the same trace as $X$, implying the following result.

\begin{theorem}
  There exists a $\Z^3$-SFT whose $\Z$-projective subdynamics is $2$-sparse and that is deterministic in the direction of that subdynamics.
\end{theorem}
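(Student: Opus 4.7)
The plan is to take the $\Z^3$-SFT $X$ constructed earlier in this section, whose $0$-to-$0$ symbol map $\phi$ onto $Z$ was just established, and twist it by the shear $f(i,j,k) = (i,j,k+2j)$. This turns the existing north-determinism of $X$ into vertical determinism while leaving the $\Z$-trace unchanged, giving the SFT promised by the theorem.

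First I would note that $X$ itself already has a $2$-sparse $\Z$-trace. Since $\phi$ is applied cellwise and satisfies $\phi^{-1}(0_\Delta) = \{0_\Gamma\}$, the supports of $x \in X$ and $\phi(x) \in Z$ coincide, so by Theorem~\ref{thm:ZIsSparse} every column of $x$ contains at most two nonzero symbols. I would also recall the explicit north-determinism of $X$: the local rules of the construction specify $x_{\vec 0}$ uniquely in terms of $x|_D$ with $D = [-1,1] \times \{-1\} \times [-1,1]$.

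Next I would verify the three required properties of $f(X)$ in turn. Since $f$ is a group automorphism of $\Z^3$ with inverse $f^{-1}(i,j,k) = (i,j,k-2j)$, the extension $f(x)_{\vec v} = x_{f^{-1}(\vec v)}$ intertwines the shift action up to the reparameterization $f \circ \sigma_g = \sigma_{f(g)} \circ f$ and sends a finite set of forbidden patterns to another finite set (the $f$-translated patterns), so $f(X)$ is still a $\Z^3$-SFT. The image neighborhood $f(D) = [-1,1] \times \{-1\} \times [-3,-1]$ is contained in the half-space $\{k < 0\}$, so $f(X)$ is deterministic in the direction $(0,0,1)$. Finally, $f$ fixes the vertical axis $\{(0,0)\} \times \Z$ pointwise, so $f(x)|_{\{(0,0)\} \times \Z} = x|_{\{(0,0)\} \times \Z}$ for every $x \in X$; the $\Z$-traces of $f(X)$ and $X$ therefore coincide and the former is $2$-sparse. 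No step is a real obstacle: the only observation with content is that the chosen shear preserves the vertical axis pointwise while pushing the past cone of north-determinism strictly below the horizontal plane.
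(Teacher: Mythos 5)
Your proposal is correct and matches the paper's own argument essentially verbatim: the paper also observes that $X$ is north-deterministic with neighborhood $D = [-1,1] \times \{-1\} \times [-1,1]$, applies the same shear $f(i,j,k) = (i,j,k+2j)$ so that $E = f(D) = [-1,1] \times \{-1\} \times [-3,-1]$ lies strictly below the horizontal plane, and notes that $f$ fixes the vertical axis, so $f(X)$ is an SFT deterministic in direction $(0,0,1)$ with the same ($2$-sparse) $\Z$-trace as $X$. The sparseness of the trace of $X$ via the support-preserving $0$-to-$0$ map $\phi$ and Theorem~\ref{thm:ZIsSparse} is likewise exactly the paper's route.
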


Since we can recode $f(X)$ into a spacetime subshift of a partial CA via a conjugacy whose neighborhood is contained in $\{0\}^2 \times \Z$, we have the following.

\begin{corollary}
  There exists a partial $\Z^2$-CA whose spacetime subshift $Y$ satisfies $\beta(Y) = 2$.
\end{corollary}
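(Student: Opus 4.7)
The plan is to chain together three observations, each of which is essentially already available. First, the theorem immediately preceding the corollary produces a $\Z^3$-SFT $f(X)$ that is deterministic in direction $\vec e_3$ and whose $\Z$-trace is $2$-sparse. The preliminaries record that, up to topological conjugacy, a $\Z^{d+1}$-SFT deterministic in $\vec e_{d+1}$ is exactly the spacetime subshift of a partial $\Z^d$-CA; the recoding that realises this conjugacy reads only a vertical window to recover one CA timestep, so its neighborhood sits inside $\{0\}^2 \times \Z$. Applying this with $d=2$, I obtain a partial $\Z^2$-CA whose spacetime subshift $Y$ is conjugate to $f(X)$ via such a ``column-wise'' map.

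Because the conjugacy is column-wise, restricting it to a single $\Z$-column yields a topological conjugacy $T(Y) \cong T(f(X))$ of one-dimensional subshifts. The shear $f(i,j,k) = (i,j,k+2j)$ translates each column vertically by an amount depending only on $(i,j)$, so as $\Z$-subshifts we have $T(f(X)) = T(X)$. Since $\beta$ is a conjugacy invariant of one-dimensional subshifts, this gives
\[ \beta(Y) = \beta(f(X)) = \ES(T(X)). \]

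It remains to show $\ES(T(X)) = 2$. For the upper bound, Theorem~\ref{thm:ZIsSparse} says $T(Z)$ is $2$-sparse, and the factor $\phi : X \to Z$ is a $0$-to-$0$ symbol map, so $\supp(\phi(x)) = \supp(x)$ for every $x \in X$. Hence $T(X)$ is also $2$-sparse, and in particular $\ES(T(X)) \le \spa(T(X)) \le 2$. For the lower bound, I would use the mat construction to exhibit, for every $r \in \N$, a configuration in $T(X)$ whose support has two points at vertical distance $> 2r$. At substitution level $n$, the height gap between an up-bridge and the central down-bridge of the geometric macrotile scales as $\Theta((2a)^n)$; choosing $n$ large and reading the column that passes through both bridges gives configurations in $Z$ with supports $\{0,N\}$ for arbitrarily large $N$, and these lift to $T(X)$ with identical supports via $\phi$. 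No such family of configurations can be covered by a single window $\{p\} + [-r,r]$ of bounded radius, so $\ES(T(X)) \ge 2$.

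The only non-mechanical step is the first one: verifying that the generic determinism-to-spacetime conjugacy of the preliminaries can be taken to have neighborhood in $\{0\}^2 \times \Z$, so that the induced map on traces is again a conjugacy. Once this is in hand, the remaining work is bookkeeping on top of Theorem~\ref{thm:ZIsSparse} and the mat geometry of Section~\ref{sec:Mats}.
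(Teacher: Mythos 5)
Your overall route is the same as the paper's: the paper's proof of this corollary is exactly the one-line observation that $f(X)$ can be recoded into the spacetime subshift of a partial $\Z^2$-CA via a conjugacy whose neighborhood lies in $\{0\}^2 \times \Z$, so that the ($\ES$ of the) trace is unchanged, with the bound $2$ coming from the preceding theorem and the stacking of two surfaces. Your reduction steps (column-wise conjugacy preserves the trace up to conjugacy, the shear $f$ does not change the set of column contents, and $\supp(\phi(x)) = \supp(x)$ because $\phi$ is a $0$-to-$0$ symbol map, giving $\ES(T(X)) \leq \spa(T(Z)) \leq 2$ via Theorem~\ref{thm:ZIsSparse}) are all correct and are exactly the bookkeeping the paper leaves implicit.

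The one step that would fail as written is your lower bound. Within a single geometric macrotile (indeed within any single connected surface of $\OC{x}$), no column ever meets both an up bridge and the central down bridge: their horizontal projections are disjoint sets of columns, and by construction the three-dimensional lift of a connected $\tau$-pattern places exactly one nonzero cube in each column -- this is precisely why one connected component contributes only a $1$-sparse trace. So ``the column that passes through both bridges'' of one macrotile does not exist, and no single-component configuration has support $\{0,N\}$ with $N > 0$. The correct witness, and the one the paper uses (``putting two surfaces on top of each other'', cf.\ the $1 0^m 1$ words in the binary variant), needs two disjoint connected components: since $Z$ is the $1$-zero-gluing closure of $\OC{x}$, you may place a translate of the limit surface so that the bottom of its down bridge hovers at arbitrary vertical distance directly above the top of an up bridge of a second, disjoint translate (keeping the components at distance greater than $1$ so they do not see each other), and then lift through the surjection $\phi : X \to Z$ to get supports $\{0,N\}$ with $N$ arbitrarily large in $T(X)$, hence $\ES(T(X)) \geq 2$. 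With that substitution your argument is complete and coincides with the paper's.
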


\section{Variants of the construction}
\label{sec:Variants}

\subsection{Wang cubes}
\label{sec:WangCubes}

We sketch a Wang cube version of $f(X)$, that is, a set of Wang cubes whose subshift $Y$ is sparse and deterministic.
As the cells of $f(X)$ are not connected to their neighbors in the $\ell_1$ metric of $\Z^3$, we represent each cell by a cluster of several Wang cubes.
Also, to obtain $\alpha(Y) = 4$ we perform some rescaling, which means that $Y$ is not conjugate to $f(X)$ (but shares its general structure).
The proof of $\alpha(Y) = 4$ is similar to that of Theorem~\ref{thm:ZIsSparse}.

To each cell $g \in \Gamma \setminus \{0\}$ of $f(X)$ we associate a cluster of Wang cubes whose shape depends on whether the north neighbor of $g$ is 1, 2 or 3 steps above $g$ in the z-direction (which can be determined from the contents of $g$).
The clusters all fit in cuboids of shape $2 \times 6 \times 4$.
They consist of a connected \emph{spine} of cells on the west half, and three \emph{wings} on the east half.
The \emph{base} of the cluster is at coordinate $(0,0,0)$ of the cuboid.
See Figure~\ref{fig:CubeCluster} for an illustration.

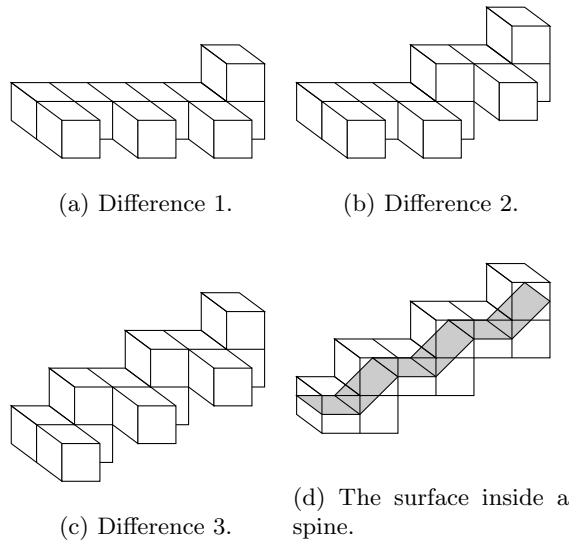
\begin{figure}[ht]
  \centering
  
  \begin{subfigure}{0.3\textwidth}
    \centering
    \begin{tikzpicture}[scale=0.5]
      \clip (-0.1,-4/3-0.1) rectangle (6.1+4/3,4.1);
      
      \foreach \x/\y in {0/0,5/1}{
        \draw (\x,\y) -- ++(0,1) -- ++(2/3,-0.5) -- ++(0,-1) -- cycle;
      }
      \draw (0,1) -- ++(5,0) -- ++(2/3,-0.5) -- ++(-5,0) -- cycle;
      \draw (5,2) -- ++(1,0) -- ++(2/3,-0.5) -- ++(-1,0) -- cycle;
      \foreach \x in {1,2,3,4,5}{
        \draw (\x,1) -- ++(2/3,-0.5);
      }
      \begin{scope}[shift={(2/3,-0.5)}]
        \draw (0,0) -- ++(6,0) -- ++(0,2);
        \foreach \x/\y in {0/0,2/0,4/0}{
          \draw [fill=white] (\x,\y) -- ++(0,1) -- ++(2/3,-0.5) -- ++(0,-1) -- cycle;
        }
        \foreach \x/\y in {1/1,3/1,5/1}{
          \draw (\x,\y) -- ++(2/3,-0.5);
        }
        \draw (5,1) -- ++(1,0);
      \end{scope}
      \begin{scope}[shift={(4/3,-1)}]
        \foreach \x/\y in {0/0,2/0,4/0}{
          \draw [fill=white] (\x,\y) -- ++(1,0) -- ++(0,1) -- ++(-1,0) -- cycle;
        }
      \end{scope}
    \end{tikzpicture}
    \caption{Difference 1.}
  \end{subfigure}
  \begin{subfigure}{0.3\textwidth}
    \centering
    \begin{tikzpicture}[scale=0.5]
      \clip (-0.1,-4/3-0.1) rectangle (6.1+4/3,4.1);
      
      \foreach \x/\y in {0/0,3/1,5/2}{
        \draw (\x,\y) -- ++(0,1) -- ++(2/3,-0.5) -- ++(0,-1) -- cycle;
      }
      \draw (0,1) -- ++(3,0) -- ++(2/3,-0.5) -- ++(-3,0) -- cycle;
      \draw (3,2) -- ++(2,0) -- ++(2/3,-0.5) -- ++(-2,0) -- cycle;
      \draw (5,3) -- ++(1,0) -- ++(2/3,-0.5) -- ++(-1,0) -- cycle;
      \foreach \x in {1,2,3}{
        \draw (\x,1) -- ++(2/3,-0.5);
      }
      \foreach \x in {4,5}{
        \draw (\x,2) -- ++(2/3,-0.5);
      }
      \begin{scope}[shift={(2/3,-0.5)}]
        \draw (0,0) -- ++(4,0) -- ++(0,1) -- ++(2,0) -- ++(0,2);
        \foreach \x/\y in {0/0,2/0,4/1}{
          \draw [fill=white] (\x,\y) -- ++(0,1) -- ++(2/3,-0.5) -- ++(0,-1) -- cycle;
        }
        \foreach \x/\y in {1/1,3/1,5/2}{
          \draw (\x,\y) -- ++(2/3,-0.5);
        }
        \draw (3,1) -- ++(1,0);
        \draw (5,2) -- ++(1,0);
      \end{scope}
      \begin{scope}[shift={(4/3,-1)}]
        \foreach \x/\y in {0/0,2/0,4/1}{
          \draw [fill=white] (\x,\y) -- ++(1,0) -- ++(0,1) -- ++(-1,0) -- cycle;
        }
      \end{scope}
    \end{tikzpicture}
    \caption{Difference 2.}
  \end{subfigure}
  
  \vspace{0.5cm}
  
  \begin{subfigure}{0.3\textwidth}
    \centering
    \begin{tikzpicture}[scale=0.5]
      \clip (-0.1,-4/3-0.1) rectangle (6.1+4/3,4.1);
      
      \foreach \x/\y in {0/0,1/1,3/2,5/3}{
        \draw (\x,\y) -- ++(0,1) -- ++(2/3,-0.5) -- ++(0,-1) -- cycle;
      }
      \draw (0,1) -- ++(1,0) -- ++(2/3,-0.5) -- ++(-1,0) -- cycle;
      \draw (1,2) -- ++(2,0) -- ++(2/3,-0.5) -- ++(-2,0) -- cycle;
      \draw (3,3) -- ++(2,0) -- ++(2/3,-0.5) -- ++(-2,0) -- cycle;
      \draw (5,4) -- ++(1,0) -- ++(2/3,-0.5) -- ++(-1,0) -- cycle;
      \draw (2,2) -- ++(2/3,-0.5);
      \draw (4,3) -- ++(2/3,-0.5);
      \begin{scope}[shift={(2/3,-0.5)}]
        \draw (0,0) -- ++(2,0) -- ++(0,1) -- ++(2,0) -- ++(0,1) -- ++(2,0) -- ++(0,2);
        \foreach \x/\y in {0/0,2/1,4/2}{
          \draw [fill=white] (\x,\y) -- ++(0,1) -- ++(2/3,-0.5) -- ++(0,-1) -- cycle;
        }
        \foreach \x/\y in {1/1,3/2,5/3}{
          \draw (\x,\y) -- ++(2/3,-0.5);
          \draw (\x,\y) -- ++(1,0);
        }
      \end{scope}
      \begin{scope}[shift={(4/3,-1)}]
        \foreach \x/\y in {0/0,2/1,4/2}{
          \draw [fill=white] (\x,\y) -- ++(1,0) -- ++(0,1) -- ++(-1,0) -- cycle;
        }
      \end{scope}
    \end{tikzpicture}
    \caption{Difference 3.}
  \end{subfigure}
  \begin{subfigure}{0.3\textwidth}
    \centering
    \begin{tikzpicture}[scale=0.5]
      \clip (-0.1,-4/3-0.1) rectangle (6.1+4/3,4.1);

      \foreach \x/\y in {0/0,2/1,4/2}{
        \draw [fill=black!20] (\x,\y+0.5) -- ++(1,0) -- ++(1,1) -- ++(2/3,-0.5) -- ++(-1,-1) -- ++(-1,0) -- cycle;
        \draw (\x+1,\y+0.5) -- ++(2/3,-0.5);
      }
      
      \foreach \x/\y in {0/0,1/1,3/2,5/3}{
        \draw (\x,\y) -- ++(0,1) -- ++(2/3,-0.5) -- ++(0,-1) -- cycle;
      }
      \draw (0,1) -- ++(1,0) -- ++(2/3,-0.5) -- ++(-1,0) -- cycle;
      \draw (1,2) -- ++(2,0) -- ++(2/3,-0.5) -- ++(-2,0) -- cycle;
      \draw (3,3) -- ++(2,0) -- ++(2/3,-0.5) -- ++(-2,0) -- cycle;
      \draw (5,4) -- ++(1,0) -- ++(2/3,-0.5) -- ++(-1,0) -- cycle;
      \draw (2,2) -- ++(2/3,-0.5);
      \draw (4,3) -- ++(2/3,-0.5);
      
      \begin{scope}[shift={(2/3,-0.5)}]
        \draw (0,0) -- ++(2,0) -- ++(0,1) -- ++(2,0) -- ++(0,1) -- ++(2,0) -- ++(0,2);
        \foreach \x/\y in {1/1,3/2,5/3}{
          \draw (\x,\y) -- ++(1,0);
          \draw (\x,\y) -- ++(0,-1);
        }
        \draw (2,2) -- ++(0,-1);
        \draw (4,3) -- ++(0,-1);
      \end{scope}
    \end{tikzpicture}
    \caption{The surface inside a spine.}
    \label{fig:SpineSurface}
  \end{subfigure}
  
  \caption{The three clusters and the surface inside a spine (shown for difference 3 along the z-axis). The perspective is as in Figure~\ref{fig:SlopeCubes}.}
  \label{fig:CubeCluster}
\end{figure}

Each cube in a cluster $C$ contains the contents of the cell $g$ and its relative position in the cluster, and they use their common faces to verify that this information is consistent within $C$.
The cluster connects to its north neighbor through the north face of the topmost cell, and to its other neighbors through the south, east and west faces of the base cube; if the base is at $(0,0,0)$, then north neighbor has its base at $(6,0,k)$ for some $k \in \{1,2,3\}$, the south neighbor at $(-6,0,k')$ for some $k' \in \{-1,-2,-3\}$, and the west and east neighbors at $(0,-1,0)$ and $(0,1,0)$.
Note that the spine of the east neighbor of $C$ will intersect one or more wings of $C$.
For this reason, we allow a Wang cube to be part of both the spine of a cluster and a wing of another, if those clusters are either neighbors or have north or south neighbors that form an east-west neighbor pair.
This condition can be verified locally by having the cubes store information about each cluster within distance 3 or less in the neighbor graph.
The rules for neighboring clusters are otherwise the same as in $f(X)$.
All faces that do not connect two cubes that are part of clusters have the color 0, and we add a blank cube that has this color on each of its faces.
Let $Y$ be the resulting SFT. The following proves Theorem~\ref{thm:Main}.

\begin{proposition}
  The SFT $Y$ is deterministic along the z-axis, and we have
  \[ \alpha(Y) = 4, \underline \alpha(Y) = 2, \beta(Y) = 2, \underline \beta(Y) = 2, \overline \beta(Y) = 4. \]
\end{proposition}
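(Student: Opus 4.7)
The determinism of $Y$ along the $z$-axis will be inherited from the $z$-determinism of $f(X)$ established in Section~\ref{sec:SFT}: the cubes of a single cluster locally identify the cluster's underlying cell of $f(X)$, and $f(X)$ is deterministic in direction $\vec e_3$. Composing these two local decodings yields an explicit finite determinism set for $Y$ below the origin.

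For $\alpha(Y) \leq 4$, I plan to combine two facts. First, by Theorem~\ref{thm:ZIsSparse} together with the observation that the shear $f(i,j,k) = (i,j,k+2j)$ maps every vertical column to itself, $f(X)$ has at most two nonzero cells per column. Second, by inspection of Figure~\ref{fig:CubeCluster} each of the three cluster shapes meets any fixed vertical column in at most two nonzero cubes (a spine cube and at most one wing cube sharing the same horizontal position). Hence $\alpha(Y) \leq 2 \cdot 2 = 4$, with the matching lower bound $\alpha(Y) \geq 4$ witnessed by a valid tiling in which a column passes through two vertically separated clusters, each contributing two cubes. For $\underline\alpha(Y) = 2$, I plan to construct a conjugacy $\phi : Y \to Y'$ that collapses each cluster into a single nonzero symbol at its base, encoding the entire cluster content there; since a column contains at most two cluster bases (one per nonzero cell of $f(X)$ it meets), $T(Y')$ is $2$-sparse, and the lower bound $\underline\alpha(Y) \geq 2$ follows from the existence of configurations with two cells in a column of $f(X)$.

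For the essential sparseness, $\beta(Y) \leq 2$ follows because each cluster's nonzero cubes meeting a column lie in a vertical window of size at most $4$, so two intervals of radius $2$ cover the support of any column of $T(Y)$; the lower bound $\beta(Y) \geq 2$ is immediate from the unbounded vertical separation between the two clusters a column can meet, and $\underline\beta(Y) = 2$ then follows from $\beta(Y) = 2$ together with nontriviality. To obtain $\overline\beta(Y) = 4$, I plan to construct a further conjugacy $\psi : Y \to Y''$ that arranges, within each cluster, the two nonzero cubes at positions whose mutual separation varies without a uniform bound across configurations of $Y''$ (exploiting the variable ``difference'' parameter of the cluster shapes together with further local rearrangement), so that no fixed radius $r$ allows three intervals to cover every column of $T(Y'')$; the matching upper bound $\overline\beta(Y) \leq 4$ comes from the fact that the at most four nonzero positions per column in any conjugate naturally group into at most four clumps, one per original cube.

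The main obstacle is the construction for $\overline\beta(Y) \geq 4$: one must design a conjugacy that genuinely defeats every candidate three-interval covering, uniformly across configurations. The other claims reduce fairly directly to Theorem~\ref{thm:ZIsSparse} combined with the cluster geometry read off from Figure~\ref{fig:CubeCluster}.
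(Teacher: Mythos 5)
Your outline gets the determinism claim and the easy lower bounds right, but the two load-bearing steps are not supported. First, the count ``$\alpha(Y)\le 2\cdot 2$'' silently assumes that a vertical column of $Y$ can meet at most two clusters because $f(X)$ has at most two cells per column. This transfer is not available: $Y$ is explicitly \emph{not} conjugate to $f(X)$ (the clusters realize a rescaled and sheared copy of the cell arrangement, with north steps stretched to length $6$ and the cluster boxes of shape $2\times 6\times 4$ overlapping their east neighbours), so columns of $Y$ do not correspond to columns of $f(X)$, and the $2$-sparseness of $T(f(X))$ says nothing directly about $T(Y)$. The paper's proof has to redo the geometric argument of Theorem~\ref{thm:ZIsSparse} inside $Y$: it passes to the surfaces running through the spines (Figure~\ref{fig:SpineSurface}), observes that a single face-connected component meets a column in at most two cubes (note these may come from \emph{two different} clusters, a spine cube of one and a wing cube of a neighbour, so even your per-cluster bookkeeping is off), and then shows that five cubes in one column would yield three distinct face-connected components whose rescaled spine-surfaces are mats within bounded distance of $f'(T^3)$, $f'(i,j,k)=(i,6j,k+j)$, whence Lemma~\ref{lem:T3Lemma} forces two of them to intersect --- a contradiction. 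Some version of this argument is also what justifies the ``at most two cluster bases per column'' you use for $\underline\alpha(Y)\le 2$ and $\beta(Y)\le 2$; without it those bounds are unsupported as well.

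Second, the plan for $\overline\beta(Y)=4$ cannot work as stated. For the lower bound, a conjugacy has bounded radius, so it cannot place the cubes coming from a single cluster (a fixed $2\times 6\times 4$ box, with ``difference'' parameter only in $\{1,2,3\}$) at unboundedly separated heights; any separation manufactured inside a cluster stays bounded by the cluster size plus twice the radius. The unbounded four-clump phenomenon must come from the global geometry: in a higher-block (radius-$k$ thickened) conjugate, a column just beside the blade of a bridge meets both the unbridge at height roughly $0$ and the thickened bridge edge at arbitrarily large height, giving two clumps per connected component at unbounded separation, and stacking two components gives four; this is the paper's route. For the upper bound, your claim that any conjugate has ``at most four nonzero positions per column, one clump per original cube'' is false: supports of conjugates are thickenings $S+B_k$ of supports of $Y$, so a column can contain many nonzero positions contributed by original cubes at horizontal distance up to $k$ from it. What actually needs proving is that these positions always fall into at most four clumps of bounded diameter, which requires both the observation that near such a height discontinuity the remaining vertical gaps are bounded (so each thickened component contributes at most two clumps) and a rerun of the Lemma~\ref{lem:T3Lemma} analysis for the thickened components to conclude that at most two of them can overlap a given column.
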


\begin{proof}[Proof sketch]
  Determinism of $Y$ follows from that of $f(X)$.
  
  Say that two non-blank Wang cubes are \emph{face-adjacent} if they share a face whose color is not 0, and \emph{face-connected} if they are connected by the face-adjacency relation.
  We first observe that in a configuration $y \in Y$ where all non-blank cubes are face-connected, each column has at most two non-blank cubes.
  Namely, in a configuration of $f(X)$ whose cells are connected by the neighbor relation, each column has at most one cell, so a column of $y$ cannot intersect two spines or two wings at different heights.
  It can intersect a spine and a wing, but since the y-coordinates of vertical pairs of cubes in the spines are disjoint from the y-coordinates of the wings, in this case it contains only one cube from each.

  We now prove $\alpha(Y) = 4$, and for that, let $y \in Y$.
  For every spine in $y$, consider a two-dimensional surface in $\R^3$ that runs through it in the north-south and east-west directions and splits the north-south faces in half, as in Figure~\ref{fig:SpineSurface}.
  Denote the union of these surfaces by $K$.
  If five non-blank cubes lie in the same column, then three of them, say $c_1, c_2, c_3$, lie in different face-connected components, and three points $\vec v_1, \vec v_2, \vec v_3 \in K$ on their west faces have equal horizontal projections.
  The clusters organize themselves into structures that correspond to $X$-macrocells of level $n$, so each $\vec v_i$ is part of arbitrarily large subsets $K_i^n$ of $K$ formed by surfaces of face-connected cubes that, when scaled appropriately, are within bounded distance of the mat $f'(T^3)$ in the mat metric, where $f'(i,j,k) = (i, 6j, k+j)$.
  By Lemma~\ref{lem:T3Lemma}, for large enough $n$ two of these subsets intersect, say $K_i^n$ and $K_j^n$.
  Thus some spine cubes whose surfaces are part of these sets are either equal or share a west-east face.
  A simple case analysis shows that in the latter case a wing of the west spine or its south or north neighbor intersects the east spine.
  Hence the cubes $c_i$ and $c_j$ are in fact face-connected, a contradiction.
  
 To see $\underline \alpha(Y), \beta(Y), \underline \beta(Y) \leq 2$, apply a conjugacy that joins pairs of vertically adjacent Wang tiles together. The lower bounds are obvious, by putting two surfaces on top of each other.
 
 The claim $\overline \beta(Y) = 4$ is less trivial, and we only outline it. Observe that whenever $X \cong Y$, the supports of configurations in $X$ are contained in sets $S+B_k$ where $S$ is the support of a configuration in $Y$ and $B_k$ is a ball of radius $k$. Thickening our surfaces this way can produce overlaps which are covered by two intervals of bounded length, but which can be arbitrarily far: when we thicken the edge of a bridge, it now hovers directly over the corresponding unbridge, and the height of the bridge of course can be arbitrary. However, it is easy to see that near such a discontinuity in our mats, there are bounds on the remaining vertical gaps, so indeed two intervals suffice to cover the traces arising from a single connected component.
 
In the proof of $\alpha(Y) = 4$ above, we showed that in a configuration of $Y$, any configuration can contain at most vertically overlapping connected components. It is easy to modify the analysis to obtain that the same remains true even if we blow up the components by $B_k$, so indeed $\overline \beta(Y) = 4$.
\end{proof}

\subsection{Binary alphabet}

We now construct a version of $f(X)$ that uses the binary alphabet and whose trace equals $X_{\leq 2}$, the one-dimensional sofic shift of configurations with at most two 1-symbols.
Say that a set $W \subset \{0,1\}^n$ of equal-length binary words is \emph{unbordered} if for all $v, w \in W$, no proper prefix of $v$ is a suffix of $w$.
Let $W \subset \{0,1\}^n$ be an unbordered set of size $|\Gamma|-1$, which can be found as a subset of $\{ 0^k w 1^k \;|\; w \in \{0,1\}^k \}$ for large enough $k$, and let $\gamma : W \to \Gamma \setminus \{0\}$ be a bijection.
We construct an SFT $Y \subset \{0,1\}^{\Z^3}$ as follows.
In a configuration $y \in Y$, if $y_{(i,j,k)} = 1$, then $w = y_{(i',j,k)} y_{(i'+1,j,k)} \cdots y_{(i'+n-1,j,k)} \in W$ for some $i' \in [i-n+1, i]$.
Because $W$ is unbordered, there is at most one such $i'$ for each $(i,j,k)$, and we say that the cell $g = \gamma(w)$ occurs in $y$ at $(i',j,k)$.
In this case a north neighbor of $g$ must occur at $(i'+n,j,k+p)$ for the $p \in \{1,2,3\}$ given by the simulated slope of $g$, and a south neighbor of $g$ must occur at $(i'-n,j,k-p')$ for some $p' \in \{1,2,3\}$.
If $g$ has an east neighbor, it must occur at $(i',j+1,k)$, and symmetrically for the west neighbor.
No other occurrences of elements of $\Gamma \setminus \{0\}$ may occur at $(i'+m,j+\ell,k+q)$ for any $m \in [-n,n]$, $\ell,q \in \{-1,0,1\}$.

Again with an argument similar to Theorem~\ref{thm:ZIsSparse} we can show that $Y$ is 2-sparse and deterministic along the z-axis. In fact, we see that no two cells that appear in coding words $w \in W$ (even with zero content) can appear in the same column.
We now modify $Y$ by changing from 0 to 1 every coordinate $y_{(i,j,k)}$ such that some element of $\Gamma \setminus \{0\}$ occurs at $y_{(i,j,k+1)}$, and call this new SFT $Y'$.
Since the first symbol of every word $w \in W$ is 0, $Y'$ is still 2-sparse, and since it is clearly conjugate to $Y$, it is deterministic.
Furthermore, the new 1-symbols allow the words $1 0^m 1$ for all $m \geq 0$ to occur in the trace of $Y'$, since we can place a bottom-most point of a down bridge of a level-$\infty$ macrocell over a top point of an up bridge of a level-$\infty$ macrocell on the same column at any distance.
We have shown the following.

\begin{proposition}
  There exists a binary $\Z^3$-SFT that is deterministic along the z-axis and whose projective subdynamics equals $X_{\leq 2}$.
\end{proposition}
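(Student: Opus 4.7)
The plan is to verify the four properties of $Y'$ asserted in the statement: binary alphabet, SFT structure, z-determinism, and $T(Y') = X_{\leq 2}$. The first three are essentially by construction. The subshift $Y'$ is binary by definition, and all adjacency rules defining $Y$ involve coordinates within $\ell_\infty$-distance $n+1$; the 1-above modification is captured by an additional radius-$(n+1)$ local rule, since detecting whether a coding word starts at $(i,j,k-1)$ only requires the horizontal row of length $n$ beginning at that coordinate. For z-determinism, $f(X)$ is z-deterministic by construction of the shear $f$, and both the binary encoding and the 1-above marking depend only on the horizontal row through the coordinate in question or on the row directly below, so z-determinism is preserved.

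For the inclusion $T(Y') \subseteq X_{\leq 2}$, I would transfer the mat-based argument of Theorem~\ref{thm:ZIsSparse} to $Y$, using that $f(X)$ has 2-sparse z-trace (the shear preserves vertical columns) and that each coding word in $W \subset 0^k \{0,1\}^k 1^k$ contributes at most one $1$ to any given z-column, since only the middle $k$ bits can be $1$ and each lies in a distinct x-column. Lemma~\ref{lem:T3Lemma} applied to $\epsilon$-perturbations of appropriate sheared translates of $T^3$ forbids three cells whose coding words contribute to a common column from being pairwise zero-gluing disjoint. The 1-above rule adds at most one extra $1$ per cell, at its starting column, so the total number of 1s in any column of $Y'$ is at most two.

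For the reverse inclusion, I would realize each element of $X_{\leq 2}$ separately. The all-zero trace and the single-1 traces are immediate from the all-zero configuration and from suitably translated encodings of a single-component configuration. The main obstacle is the remaining case: producing, for each $m \geq 0$, a configuration whose trace on some column is $1\, 0^m\, 1$. Here I would exploit the self-similar hierarchy. At level $n$, the top of an up bridge and the bottom of a down bridge inside a level-$n$ macrotile have vertical distance growing like $h_n = (b+c)\frac{(2a)^n - 1}{2a-1}$, which is unbounded in $n$. Choosing $n$ large enough to realize any prescribed gap $m + 1$, and then tuning the horizontal offsets inside the two bridges so that their starting columns align, makes the two 1-above marks appear on the same column at the desired vertical distance. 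The intermediate positions on the column contain no coding word, since the column passes only through the interior of the two chosen bridges, so they remain $0$, giving the trace pattern $1\, 0^m\, 1$ exactly. This completes the plan.
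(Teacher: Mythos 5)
Your plan for the forward inclusion and for binarity/SFT/determinism is broadly in the spirit of the paper, but the reverse inclusion rests on an idea that cannot work. You propose to realize $1\,0^m\,1$ inside a \emph{single} level-$n$ macrotile by aligning the top of an up bridge and the bottom of a down bridge over a common column. Within one connected component this is impossible: each component is (after decoding) a $1$-sparse surface, so a vertical column meets at most one of its cells, at most one coding-word footprint, and at most one mark. Concretely, the up bridges and the down bridge of a macrotile have \emph{disjoint} horizontal projections (strips $[1,2]\cup[3,4]$ versus $[2,3]$ in the notation of Section~\ref{sec:Mats}), and there is no freedom to ``tune horizontal offsets inside the two bridges'': the substitution fixes all relative positions within a macrotile. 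Even setting that aside, the vertical gaps available inside one macrotile are of the quantized form $h_n$, a geometrically growing sequence, so you could never hit every $m$. (Note also that if your mechanism did produce two $1$s in one column from a single component, it would contradict the very $1$-per-component bound your forward inclusion uses.) The paper instead uses \emph{two disjoint components}: place the bottom-most cell of a down bridge of one level-$\infty$ surface above a top cell of an up bridge of another surface at an arbitrary vertical distance $\geq 2$ (allowed because disjoint components may be translated vertically freely relative to each other), and choose the horizontal offset between the two components so that the column through the upper cell's start (which carries the added $1$) passes through a $1$-bit of the lower cell's coding word; this produces $1\,0^m\,1$ for every $m\geq 0$.

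There is also a gap in your count for $T(Y')\subseteq X_{\leq 2}$. From your stated facts --- at most two cells' coding words meet a column (via Lemma~\ref{lem:T3Lemma}, as in Theorem~\ref{thm:ZIsSparse}), each word contributes at most one $1$ per column, and each cell adds at most one extra $1$ at its starting column --- the naive bound is three or four, not two: a column could a priori see a word-bit $1$ from each of two cells \emph{and} be the starting column of one of them. The missing step, which the paper states explicitly, is that every $w\in W$ begins with $0$, so the added $1$ of a cell sits in a column where that same cell's word contributes a $0$; hence each cell contributes at most one $1$ to any column, and two cells give at most two. Relatedly, your parenthetical ``only the middle $k$ bits can be $1$'' is false (the last $k$ bits of every $w\in W$ are all $1$); what you actually need is the first bit being $0$ together with distinct bits lying in distinct columns. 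The determinism and SFT claims are fine in substance, though the paper's cleaner route is to observe that $Y'$ is conjugate to $Y$ and that determinism in a fixed direction is conjugacy-invariant.
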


\subsection{A simpler example}
\label{sec:SimplerExample}

In this section we give a much simpler substitution which can be used to obtain SFTs with sparse projective $\Z$-subdynamics, although with a worse bound on sparseness. The alphabet and the interpretation as a surface are the same as before, and we construct the zero-gluing subshift and its 0-to-0 sofic implementation as before.





\begin{definition}
The two-dimensional $3 \times 4$ substitution $\tau'$ over $\Sigma$ is defined by
\[ \tile{x} \mapsto \begin{smallmatrix}
\tile{x} & \tile{x} & \tile{x} \\
\rtile{ } & \lrtile{s} & \ltile{ } \\
\rtile{ } & \lrtile{n} & \ltile{ } \\
\tile{x} & \tile{x} & \tile{x} 
\end{smallmatrix}\]
for $\tile{x} \in \{\tile{ }, \tile{n}, \tile{s}\}$.
In the case of a tile with a jagged west border, the west border of the westmost tiles in the image is jagged, and symmetrically for east borders.
\end{definition}

\begin{proposition}
\label{prop:SimplerExample}
The three-dimensional SFT corresponding to $\tau'$ is $37$-sparse but not $3$-sparse.
\end{proposition}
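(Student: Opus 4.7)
The plan is to mirror the proof of Theorem~\ref{thm:ZIsSparse}, suitably modified for the simpler substitution. Let $T'$ denote the piecewise-linear mat corresponding to a $\tau'$-macrotile (footprint $3 \times 4$ with a single central up bridge and flat unbridges on each side), and let $T'^3$ be its north-south triple. Define $x'$ as the limit of iterated $\tau'$-macrotiles and the subshift $Z'$ as the $1$-zero-gluing closure of $\OC{x'}$, analogously to $Z$ in Section~\ref{sec:DefOfZ}. One first verifies the obvious analogs of Lemma~\ref{lem:MatNeighbor} and Lemma~\ref{lem:SufficientParams} for $T'$, choosing the numeric parameters of the substitution so that rescaled $\tau'^n$-macrotile triples stay $\epsilon$-close to $T'^3$ in the mat metric.

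For the upper bound that $Z'$ is $37$-sparse, the plan is to establish an analog of Lemma~\ref{lem:T3Lemma}: no $38$ $\epsilon$-perturbations of $T'^3$-translates can be pairwise disjoint while all meeting a common column at heights within $\epsilon$ of each other. The proof follows the template of Lemma~\ref{lem:T3Lemma}: use Lemma~\ref{lem:Fundamental} to totally order pairwise-disjoint mats by height in each overlap region, enumerate the finitely many horizontal shift classes of $T'^3$-translates relative to the common column, and in each class bound the number of mats that can appear at close heights given the single-up-bridge structure. The constant $37$ arises as the sum over these shift classes. Granting this analog, the proof of Theorem~\ref{thm:ZIsSparse} transfers verbatim (with $3$ replaced by $38$) to yield that $Z'$ is $37$-sparse.

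For the lower bound that $Z'$ is not $3$-sparse, the plan is to exhibit an explicit configuration in $Z'$ with some vertical column containing at least $4$ cells. The key geometric observation is that $T'$ has only an up bridge (and no down bridge), making its vertical profile asymmetric: the mat cannot extend below its baseline. This asymmetry permits four elements of $\OC{x'}$ with well-chosen horizontal and vertical shifts to pairwise satisfy $1$-separation while all hitting a common column; their union is then a valid element of the $1$-zero-gluing closure $Z'$ and contributes four cells to that column. The construction exploits the same phenomenon that the remark after Figure~\ref{fig:DoNotWork} alludes to: the mats there admit three-mat interleavings ruling out $\underline \alpha(X)=2$, and the still simpler mat $T'$ admits enough additional flexibility to produce a four-mat interleaving. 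The main obstacle is the proof of the analog of Lemma~\ref{lem:T3Lemma} producing the exact constant $37$; its case analysis is considerably more intricate than that of Lemma~\ref{lem:T3Lemma} because $T'^3$ admits many more compatible horizontal arrangements, while the four-mat construction for the lower bound is comparatively routine.
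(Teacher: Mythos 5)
You have the right skeleton (the zero-gluing closure of the orbit closure of the limit surface, plus the transfer argument of Theorem~\ref{thm:ZIsSparse}), but both substantive steps are deferred rather than proved, and for the upper bound you propose a route that is not the paper's and whose feasibility is unclear. Concretely, you reduce $37$-sparseness to an unproven claim that no $38$ pairwise disjoint $\epsilon$-perturbed translates of the triple mat built from $\tau'$ can meet a common column at $\epsilon$-close heights, with the constant ``arising as the sum over these shift classes,'' and you yourself flag this case analysis as the main obstacle. That claim is essentially the entire content of the bound, and it is not clear that a Lemma~\ref{lem:T3Lemma}-style analysis would terminate at exactly $37$: the horizontal offsets form a continuum, and the single-bridge mat admits far more disjoint arrangements than $T^3$, which is precisely why the paper does not argue this way. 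Instead, the paper pigeonholes on discrete hierarchical data: for a single sufficiently large level $n$ (chosen so that the vertical diameter $m$ of the cells in the column satisfies $m < h/4$, where $h$ is the macrocell height), each cell in the column lies in a distinct level-$n$ macrocell, and one records the macrocell's slope type ($3$ options), which of the three sections (west unbridge, up bridge, east unbridge) contains the cell ($3$ options), and which quarter of the macrocell's height range it occupies ($4$ options). With $3 \cdot 3 \cdot 4 = 36$ classes, $37$ cells force two into the same class, and two disjoint same-slope macrocells whose matching sections meet the column at nearly equal relative heights must intersect (in the spirit of Lemma~\ref{lem:Fundamental}), a contradiction. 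This is where the constant $37$ actually comes from; your proposal neither produces it nor indicates how it would.

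The lower bound is also left unestablished: you assert that four elements of the orbit closure can be chosen pairwise $1$-separated while meeting a common column, calling this routine, but you exhibit no arrangement, and the asymmetry you cite (no down bridge) is an observation, not a construction. The paper's witness is explicit: two $\infty$-tiles (suitable limit configurations in the orbit closure) placed so that the flat west unbridge column of one lies just above the flat east unbridge column of the other, contributing two cells to a column, together with two further $\infty$-tiles slid underneath so that tops of their up bridges lie in that same column, contributing two more. Verifying the $1$-separation of such a specific arrangement is the real content here -- note that naive attempts such as stacking a surface over a vertical translate of itself are blocked by the blades of the $\infty$-tile, whose heights take all integer values -- so ``well-chosen shifts'' genuinely requires exhibiting the shifts and checking them. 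As it stands, neither direction of Proposition~\ref{prop:SimplerExample} is proved by your proposal.
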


\begin{proof}[Proof sketch]
  Suppose that we can find $x$ different cells in a single vertical column, and let $m$ be the diameter of this set.
  For each $n \geq 0$ the cells lie in different level-$n$ macrocells.
  Then by the pigeonhole principle, in $y = \lceil x/3 \rceil$ of them the slope of the macrocell is the same.
  In $z = \lceil y/3 \rceil$ of those, the y-coordinates of the cells lie in the same third of their macrocell (in terminology analogous to that of Section~\ref{sec:Mats}, in the left unbridge, the up bridge or the right unbridge).
  Now, if $h$ is the total height of the level-$n$ macrocell, divide the cells into four height classes depending on how high they are in their macrocell relative to its base (which can be defined arbitrarily, since we are restricting to macrocells of the same slope): $[0,h/4)$, $[h/4,h/2)$, $[h/2, 3h/4)$ and $[3h/4,h]$.
  In $w = \lceil z/3 \rceil$ of the cells, the height class is the same.
  As long as $m < h/4$, the condition $w \geq 2$ leads to a contradiction, showing that necessarily $x < 37$.

  It is easy to show that the subshift is not $3$-sparse, by putting the left and right unbridges of two $\infty$-tiles just on top of each other, and putting the tops of the up bridges of two $\infty$-tiles under them.
\end{proof}

We conjecture that the $\Z$-projective subdynamics of this example is indeed $4$-sparse.

\section{Open questions}

We are not able to prove the optimality of Theorem~\ref{thm:Main}.
The following proposition states that at least not all of the sparseness bounds can be dropped to $1$, which is the obvious lower bound for each of them for a nontrivial subshift.

\begin{proposition}
For any set of Wang cubes $C$ defining an SFT $X \subset C^{\Z^3}$ with $|X| \geq 2$, we have $\alpha(X) \geq 2$ and $\overline \beta(X) \geq 2$.
\end{proposition}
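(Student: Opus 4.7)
My plan is to prove each inequality in turn by a slice-stacking construction that exploits the purely local nature of Wang-cube matching.

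For $\alpha(X) \geq 2$ I would argue by contradiction. Assuming $X$ is $1$-sparse and picking $x \in X$ with a non-zero cube $c$, which after shifting I may put at the origin, the $1$-sparseness forces $x_{(0,0,\pm 1)}$ to be the zero cube. Since every column of every configuration contains infinitely many vertically adjacent zero cubes, the zero cube must satisfy $z_T = z_B =: z_V$. Face matching then forces $c$'s top and bottom faces to equal $z_V$, and the same reasoning applies to every non-zero cube in the horizontal slice $x|_{z=0}$. The configuration $y$ defined by $y_{(i,j,k)} = x_{(i,j,0)}$ for all $k \in \Z$ would then satisfy all face-matching rules (horizontal matchings are inherited from $x$'s slice, and vertical ones hold because every cube in the slice has equal top and bottom faces), so $y \in X$; but the column of $y$ at $(0,0)$ holds $c$ at every height, contradicting $1$-sparseness.

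For $\overline\beta(X) \geq 2$ it suffices to prove $\beta(X) \geq 2$, that is, to show that $T(X)$ contains configurations whose non-zero support has unbounded diameter. I would split into cases based on whether the all-zero configuration lies in $X$. If it does not, then $z_T \neq z_B$ and two vertically adjacent zero cubes are impossible, so in every configuration each column contains infinitely many non-zero cubes and already $\beta(X) = \infty$. If the all-zero configuration does lie in $X$, then $z_T = z_B = z_V$, and I would use vertical replication. Starting from a non-zero $x \in X$ with a cube $c$ at the origin, I would either locate a column of $x$ containing infinitely many non-zero cubes (giving $\beta(X) = \infty$ at once), or else extract a configuration $\tilde x \in X$ whose non-zero cubes occupy only a bounded vertical range $[0,H]$ while still containing $c$ at the origin. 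For each $N > H$, the configuration obtained by placing copies of $\tilde x$ at heights $0, N, 2N, \ldots$ and zero cubes everywhere else would be valid in $X$, because every vertical interface is either between two zero cubes or between the top (resp.\ bottom) face of a block of $\tilde x$ (equal to $z_V$ since it sat next to a zero cube in $\tilde x$) and a zero cube. Its column at $(0,0)$ then contains non-zero cubes at distance $N$ for arbitrary $N$, showing $\beta(X) \geq 2$.

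The main technical obstacle will be the extraction step in the case where no column of $x$ has infinitely many non-zero cubes yet the overall support of $x$ is unbounded vertically. Here I would need to locate a horizontal slice along which to truncate $x$ so that the resulting finite-support configuration remains in $X$. I expect to find such a slice by a compactness and pigeonhole argument on the finite cube set $C$, using the purely local nature of Wang-cube rules to guarantee that any truncation whose boundary faces agree with $z_V$ yields a valid configuration. Once such a slice is found, the truncation reduces the problem to the bounded case and the rest of the argument is routine.
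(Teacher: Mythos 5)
Your argument for $\alpha(X) \geq 2$ is correct, and it is a genuinely different route from the paper's: the paper deduces $\alpha(X)\geq 2$ from $\overline\beta(X)\geq 2$ via the inequality $\alpha(X)\geq\overline\beta(X)$, whereas your slice-stacking argument is direct and even avoids any assumption on the zero cube's faces, since $1$-sparseness itself forces $z_T=z_B$ and forces every nonzero cube of the slice to have top and bottom faces equal to $z_V$, so the columnized slice is a valid tiling. That half is fine.

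The $\overline\beta(X)\geq 2$ half has genuine gaps. First, ``the all-zero configuration is not in $X$'' only says that \emph{some} pair of opposite faces of the zero cube mismatches, not that $z_T\neq z_B$; if $z_T=z_B$ but, say, $z_E\neq z_W$, vertically adjacent zeros are perfectly possible and your Case A collapses as written (it can be rescued: no two east--west adjacent zeros forces the union of the supports of two horizontally adjacent columns to be all of $\Z$, so some column is infinite, but that argument is missing). Second, and more seriously, the Case B dichotomy is not exhaustive: it can happen that no column contains infinitely many nonzero cubes \emph{and} no nonzero configuration of $X$ has bounded vertical extent --- the Wang-cube SFT constructed in this very paper is such an example (columns carry at most four nonzero cubes, yet every nonzero configuration contains geometric macrotiles of every level, whose heights grow without bound, so no truncation to a slab $\Z^2\times[0,H]$ can stay in $X$). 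Hence the ``extraction step'' you flag as the main technical obstacle is not a technicality to be settled by compactness and pigeonhole; it is false in general, and it is false precisely in the interesting case the proposition is meant to cover. The repair is the paper's argument: suppose for contradiction $\beta(X)=1$, i.e.\ there is a uniform bound $m$ on the diameter of the nonzero support of every column, and superimpose $x$ with its vertical translate by $n>m$; per-column disjointness of the two supports together with the opposite-face equalities of the zero cube (which hold exactly in your Case B, when the all-zero configuration is in $X$ --- a hypothesis the paper itself uses only implicitly) makes the superposition a valid tiling containing a column of support diameter at least $n$, contradicting the bound. No configuration of bounded height is ever needed.
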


\begin{proof}
  Suppose $\overline \beta(X) = 1$.
  Then in every column of every $x \in X$, the maximum distance between non-blank cubes is bounded by some $m \in \N$, and in some $x$ there are non-blank cubes.
  Since the top face of the top non-blank cube and the bottom face of every bottom non-blank cube of each column has the color of the blank cube, we can place an additional copy of each non-blank cube $n$ steps above itself for any $n > m$.
  This implies $\overline \beta(X) \geq 2$, a contradiction.
  The other claim follows from $\alpha(X) \geq \overline \beta(X)$.
\end{proof}

\begin{question}
  Does there exist a set of Wang cubes $C$ such that the set of tilings $X \subset C^{\Z^3}$ satisfies one (or all) of $\alpha(X) = 2$, $\underline \alpha(X) = 1$, $\beta(X) = 1$, $\underline \beta(X) = 1$, or $\overline \beta(X) = 2$?
\end{question}

We also do not know whether $\beta(X) = 2$ is optimal among general zero-gluing $\Z^3$-subshifts.
In terms of concrete projective subdynamics, a natural question is the following.

\begin{question}
  Does there exist a three-dimensional subshift of finite type $X \subset \{0,1\}^{\Z^3}$ whose $\Z$-projective subdynamics is $X_{\leq 1}$?
\end{question}

More generally, one can ask for a full classification of possible sofic subdynamics. All one-dimensional sofic shifts without a \emph{universal period} were implemented with $\Z^2$-SFTs in \cite{PaSc15} (see the article for the definition of the universal period), so they are $\Z$-projective subdynamics of $\Z^d$-SFTs for all $d \geq 2$.

\begin{question}
  Which $\Z$-sofics with a universal period are $\Z$-projective subdynamics of $\Z^d$-SFTs, for each $d \geq 3$?
\end{question}

Even more generally one may ask for a classification of all projective subdynamics; these have not been classified even in dimension two, but in principle the $\Z$-traces of $\Z^d$-shifts could be a larger but simpler class. In \cite{PaSc15} it is shown that some one-dimensional computable subshifts are not traces of $\Z^d$-SFTs for any $d$. It is a particularly interesting question whether the set of possible traces is the same for all $d \geq 3$. 

Let $G$ be a finitely generated group.
Then a zero-gluing $G \times \Z$-subshift has a well-defined $\Z$-trace and we can discuss its sparseness.
If the subshift is deterministic in the direction of the trace, it can be seen as the spacetime subshift of a cellular automaton on a $G$-subshift.
This is exactly the setting of~\cite{SaTo21}, where it was shown that for a large class of groups $G$ and subshifts on them, no such CA with sparse traces exist.
The case of the free groups was explicitly left open, and we are not able to construct even a non-deterministic example.

\begin{question}
  Let $F_2$ be the free group on two generators.
  Does there exist a $F_2 \times \Z$-SFT with sparse $\Z$-trace, or even such a zero-gluing $F_2 \times \Z$-subshift?
\end{question}

We can also generalize the subgroup defining the trace subshift.

\begin{question}
  For which pairs of finitely generated groups $H \leq G$ does there exist a $G$-SFT or a zero-gluing $G$-subshift with sparse $H$-trace?
\end{question}

While we have made our SFT vertically deterministic in one direction, we had technical difficulties making the other direction deterministic, and leave this question open.

\begin{question}
  Does there exist a three-dimensional subshift of finite type $X \subset \{0,1\}^{\Z^3}$ whose $\Z$-projective subdynamics is sparse, such that the orthogonal $\Z^2$-subdynamics is expansive? 
\end{question}

Let $\mathcal{S} \subset \N$ be the set of Gödel numbers of $\Z^3$-SFTs that have sparse $\Z$-traces. Usually natural sets like these are complete for some early level of the arithmetical hierarchy. It is easy to show that $\mathcal{S}$ is undecidable, and is at most $\Sigma^0_3$.

\begin{question}
Where is $\mathcal{S}$ in the arithmetical hierarchy?
\end{question}


\bibliographystyle{plain}
\bibliography{../../../bib/bib}{}

\end{document}